\numberwithin{equation}{section}
\numberwithin{figure}{section}
\newtheorem {theorem}{Theorem}[section]
\newtheorem {proposition}[theorem]{Proposition}
\newtheorem {lemma}[theorem]{Lemma}
\newtheorem {corollary}[theorem]{Corollary}
\theoremstyle{definition}
\newtheorem{definition}[theorem]{Definition}
\newtheorem*{convention*}{Convention}
\newtheorem {example}[theorem]{Example}
\newtheorem {remark}[theorem]{Remark}
\theoremstyle{remark}
\newcommand{\eqdistr}{\stackrel{d}{=}}
\newcommand{\Vol}{\operatorname{Vol}}
\newcommand{\Cov}{\operatorname{Cov}}
\newcommand{\const}{\operatorname{const}}
\renewcommand{\Re}{\operatorname{Re}}  
\renewcommand{\Im}{\operatorname{Im}}  
\def\ba{\begin{array}}
\def\ea{\end{array}}
\def\bea{\begin{eqnarray} \label}
\def\eea{\end{eqnarray}}
\def\be{\begin{equation} \label}
\def\ee{\end{equation}}
\def\bit{\begin{itemize}}
\def\eit{\end{itemize}}
\def\ben{\begin{enumerate}}
\def\een{\end{enumerate}}
\def\lan{\langle}
\def\ran{\rangle}
\def\BB{\mathbb{B}}
\def\CC{\mathbb{C}}
\def\E{\mathbb{E}}
\def\N{\mathbb{N}}
\def\P{\mathbb{P}}
\def\R{\mathbb{R}}
\def\RRd1{\mathbb{R}^{d+1}}
\def\bS{\mathbb{S}}
\def\Var{\mathbf{V}\textup{ar}\,}
\def\Cov{\mathbf{C}\textup{ov}}
\def\cF{\mathcal{F}}
\newcommand{\eee}{{\rm e}}
\newcommand{\ind}{\mathbbm{1}}
\newcommand{\eps}{\varepsilon}
\newcommand{\pos}{\mathop{\mathrm{pos}}\nolimits}
\newcommand{\aff}{\mathop{\mathrm{aff}}\nolimits}
\newcommand{\lin}{\mathop{\mathrm{lin}}\nolimits}
\newcommand{\conv}{\mathop{\mathrm{conv}}\nolimits}
\newcommand{\dd}{{\rm d}}
\newcommand{\bsl}{\backslash}
\DeclareMathOperator{\relint}{relint}
\DeclareMathOperator{\relbd}{relbd}
\def\bg{\mathbf{g}}
\newcommand{\sgn}{\mathop{\mathrm{sgn}}\nolimits}
\begin{document}

\title{\bfseries Angles of orthocentric simplices}

\author{Zakhar Kabluchko and Philipp Schange}

\date{}

\maketitle

\begin{abstract}
A \( d \)-dimensional simplex in Euclidean space is called orthocentric if all of its altitudes intersect at a single point, referred to as the orthocenter. We explicitly compute the internal and external angles at all faces of an orthocentric simplex. To this end, we introduce a parametric family of polyhedral cones, called orthocentric cones, and derive formulas for their angles and, more generally, for their conic intrinsic volumes.

We characterize the tangent and normal cones of orthocentric simplices in terms of orthocentric cones with explicit parameters. Depending on whether the orthocenter lies inside the simplex, on its boundary, or outside, the simplex is classified as acute, rectangular, or obtuse, respectively. The solid angle formulas differ in these three cases.

As a probabilistic application of the angle formulas, we explicitly compute the expected number of \( k \)-dimensional faces and the expected volume of the random polytope \([g_1/\tau_1, \ldots, g_n/\tau_n]\), where \( g_1, \ldots, g_n \) are independent standard Gaussian vectors in \( \mathbb{R}^d \), and \( \tau_1, \ldots, \tau_n > 0 \) are constants.

\noindent
\bigskip
\\
{\bf Keywords}. Orthocentric simplex, orthocentric cone, solid angle, conic intrinsic volume, regular simplex, spherical convexity, tangent cone, normal cone, Gaussian polytope, analytic continuation, standard normal distribution function, error function. \\
{\bf MSC 2020}. Primary: 60D05, 52A22; Secondary: 52A55, 52B11, 52A39, 52B05, 62H10.  
\end{abstract}

\tableofcontents

\section{Introduction and main results}
\subsection{Orthocentric simplices}
The altitudes of any triangle in the plane intersect at a single point, known as the \emph{orthocenter} of the triangle. In contrast, this property does not hold for tetrahedra in three-dimensional space~\cite{havlicek_weiss}. A $d$-dimensional simplex (with $d \geq 2$) is called \emph{orthocentric} if all of its altitudes intersect in a single point. While many classical results in elementary triangle geometry—such as the Euler line theorem and the Feuerbach nine-point circle theorem—admit natural generalizations to orthocentric simplices~\cite{mehmke,egervary_on_ortho_simpl,egervary_feuerbach}, there are also numerous additional reasons to regard orthocentric simplices as the ``true'' $d$-dimensional analogues of triangles~\cite{hajja_martini_orthocentric_true_generalization}. For a comprehensive overview of the properties of orthocentric simplices, along with references to the relevant literature, we refer the reader to the work of~\citet{Edmonds2005}.

Depending on the position of the orthocenter, orthocentric simplices fall into three subclasses:
\begin{itemize}
    \item \emph{Acute simplices} are those for which the orthocenter lies in the relative interior of the simplex. Every $d$-dimensional acute orthocentric simplex is isometric to
    \begin{equation}\label{eq:canonical_acute_orthocentric_simpl_intro}
    \left[\frac{e_0}{\tau_0},\frac{e_1}{\tau_1},\ldots, \frac{e_d}{\tau_d}\right],
    \quad \text{with orthocenter at } \frac{\tau_0 e_0 + \ldots + \tau_d e_d}{\tau_0^2 + \ldots + \tau_d^2},
    \end{equation}
    for suitable parameters $\tau_0,\tau_1,\ldots,\tau_d > 0$. Here, $e_0,\ldots,e_d$ denote the standard orthonormal basis of $\R^{d+1}$, and $[\ldots]$ denotes the convex hull.

    \item \emph{Obtuse simplices} are those in which the orthocenter lies outside the simplex. Every obtuse orthocentric simplex is isometric to
    \begin{equation}\label{eq:canonical_obtuse_orthocentric_simpl_intro}
    \left[\frac{\tau_0 e_0 + \ldots + \tau_d e_d}{\tau_0^2 + \ldots + \tau_d^2},\frac{e_1}{\tau_1}, \ldots, \frac{e_d}{\tau_d}\right],
    \quad \text{with orthocenter at } \frac{e_0}{\tau_0},
    \end{equation}
    for suitable parameters $\tau_0,\tau_1,\ldots,\tau_d > 0$.

    \item \emph{Rectangular simplices} are those for which the orthocenter lies on the relative boundary of the simplex—then the orthocenter coincides with one of the vertices. Every rectangular orthocentric simplex is isometric to
    \begin{equation}\label{eq:canonical_rectangular_orthocentric_simpl_intro}
    \left[0,\frac{e_1}{\tau_1},\ldots, \frac{e_d}{\tau_d}\right],
    \quad \text{with orthocenter at } 0,
    \end{equation}
    for suitable parameters $\tau_1,\ldots,\tau_d > 0$. These simplices can be interpreted as limiting cases of both acute and obtuse simplices~\eqref{eq:canonical_acute_orthocentric_simpl_intro} and~\eqref{eq:canonical_obtuse_orthocentric_simpl_intro}  as $\tau_0 \to \infty$.
\end{itemize}

A more detailed discussion of these and related facts will be given in Section~\ref{sec:orthocentric_simpl}.
Orthocentric simplices that are not rectangular are referred to as \emph{oblique}.

\begin{remark}\label{rem:egervary_orthocentric_collection}
The following observation, due to~\citet{egervary_feuerbach}, sheds light on the structural similarity between~\eqref{eq:canonical_acute_orthocentric_simpl_intro} and~\eqref{eq:canonical_obtuse_orthocentric_simpl_intro}. Let $[v_0, v_1, \ldots, v_d]$ be an oblique orthocentric simplex with orthocenter $w$. Then, any point in the set $\{v_0, \ldots, v_d, w\}$ is the orthocenter of the orthocentric simplex formed by the remaining $d+1$ points.
\end{remark}

\subsection{Polyhedral cones and their angles}
The aim of the present paper is to compute the internal and external angles of orthocentric simplices, along with several closely related quantities. Let us first recall some facts on polyhedral cones and their solid angles.
A set \( C \subseteq \R^d \) is called a \emph{polyhedral cone} if it is the solution set of a finite system of linear homogeneous inequalities; that is, there exist vectors \( w_1, \ldots, w_m \in \R^d \) such that
\[
C = \{ x \in \R^d : \langle w_1, x \rangle \leq 0, \ldots, \langle w_m, x \rangle \leq 0 \}.
\]
An equivalent definition is that \( C \subseteq \R^d \) is a polyhedral cone if it can be represented as the \emph{positive hull} of finitely many vectors \( v_1, \ldots, v_n \in \R^d \); that is,
\[
C = \pos(v_1, \ldots, v_n) = \left\{ \lambda_1 v_1 + \cdots + \lambda_n v_n : \lambda_1\geq 0, \ldots, \lambda_n \geq 0 \right\}.
\]
Assuming that a polyhedral cone \( C \subseteq \R^d \) has nonempty interior, its \emph{solid angle} is defined by
\[
\alpha(C) = \frac{\lambda_d(C \cap \BB^d)}{\lambda_d(\BB^d)},
\]
where \( \BB^d = \{ x \in \R^d : \|x\| \leq 1 \} \) is the closed unit ball in \( \R^d \), and \( \lambda_d \) denotes the \( d \)-dimensional Lebesgue measure. Note that $\alpha(C) = 1$ if and only if $C$ is a linear subspace;  it can be shown that otherwise we have  $0 < \alpha(C) \leq 1/2$.  For example, the solid angle of the nonnegative orthant $[0,\infty)^d$ is $1/2^d$.

A \emph{polyhedron} is an intersection of finitely many half-spaces. Given a polyhedron \( P \) and one of its faces \( F \), the \emph{tangent cone} \( T(F, P) \) is defined as the set of all directions pointing from any point \( x \) in the relative interior of \( F \) into \( P \). The \emph{normal cone} \( N(F, P) \) is defined as the polar cone of \( T(F, P) \); see Section~\ref{sec:facts_convex_geometry} for precise definitions. The solid angle of \( T(F, P) \) (respectively, \( N(F, P) \)) is called the \emph{internal} (respectively, \emph{external}) angle of \( P \) at \( F \) and is denoted by \( \beta(F, P) := \alpha(T(F, P)) \) (respectively, \( \gamma(F, P) := \alpha(N(F, P)) \)).

It is generally known that computing solid angles of polyhedral cones is a difficult task, even in the special case when $C= \pos (v_1,\ldots, v_d)$ is generated by $d$ linearly independent vectors $v_1,\ldots, v_d \in \R^d$.
In dimension $d=3$, there exist explicit formulas for $\alpha(C)$ going back to Euler~\cite{euler_mensura} and Lagrange~\cite{lagrange_1799solutions}; see, e.g., \cite{eriksson_solid_angles}. For general dimension $d$, there is an expansion of $\alpha(C)$ into an infinite hypergeometric series in the $\binom d2$ variables $\langle v_i/\|v_i\|, v_j/\|v_j\|\rangle$, $1\leq i < j \leq d$, due to~\citet{aomoto}. \citet{ribando} rediscovered Aomoto's series and gave a simplified proof.  A discussion of these results can be found in~\cite[Section~3]{beck_robins_sam} and~\cite{SatoSphericalOrthoc}; a numerical method to compute solid angles based on Aomoto's series was developed recently by  \citet{fitisone_zhou_solid_angle}.

The problem of computing the solid angle of a polyhedral cone $C\subseteq \R^d$ is equivalent to the problem of computing the spherical volume of the spherical polytope $C \cap \bS^{d-1}$, where $\bS^{d-1}$ is the unit sphere in $\R^d$. The latter  problem, together with its analogue in the hyperbolic space, received much attention starting with the work of Lobachevsky, Bolyai and Schl\"afli~\cite{schlafli_multiple}; see e.g.\ the review papers of~\citet{milnor_hyperbolic_volumes,milnor_schlafli}, \citet{vinberg_volumes_polyhedra_russ_math_surv}, \citet{abrosimov_mednykh_volumes_polytopes_spaces_const_curv,abrosimov_mednykh} and the books by~\citet[Chapter~7]{alekseevski_vinberg_solodovnikov_book} and~\citet{boehm_hertel_book}. Another closely related problem is the computation of the so-called \emph{orthant probabilities}, that is probabilities of the form $\P[\eta_1 \geq 0,\ldots, \eta_d\geq 0]$, where $(\eta_1,\ldots, \eta_d)$ is a multivariate Gaussian random vector; see the books of \citet[Chapter~5]{kotz_balakrishnan_johnson_book_cont_multivar_vol_1} and~\citet{genz_bretz_book_multivar_normal_t} for overviews of existing results.

\subsection{Orthocentric cones}
Despite considerable interest in the computation of solid angles—arising from areas as diverse as differential geometry and statistics—the number of explicit formulas for the solid angles of polyhedral cones valid in arbitrary dimensions remains very limited. In the definition below, we introduce a class of polyhedral cones that arise naturally as tangent and normal cones of orthocentric simplices and for which explicit angle computations can be carried out.
\begin{definition}[Orthocentric cones]\label{def:ortho_cone_intro}
Let $d\in \N$,  $\lambda_0,\lambda_1,\ldots,\lambda_d \in  \R\backslash\{0\}$ and $\eps_1, \ldots, \eps_d \in \{\pm 1\}$. An \emph{orthocentric cone} is defined by
$$
C_d(\lambda_0; \lambda_1,\ldots,\lambda_d; \eps_1, \ldots, \eps_d) := \pos (\eps_1 v_1,\ldots,\eps_d v_d),
$$
where $v_1,\ldots,v_d$ are linearly independent vectors in $\R^N$ (with $N\geq d$) satisfying
\begin{equation}\label{eq:v_i_scalar_prod_intro}
\langle v_i, v_j\rangle
=
\frac{1}{\lambda_0} + \frac{\delta_{ij}}{\lambda_i}
=
\begin{cases}
\frac 1 {\lambda_0}, &\text{ if } i\neq j,\\
\frac 1 {\lambda_i} + \frac 1 {\lambda_0}, & \text{ if } i=j,
\end{cases}
\qquad
i,j\in \{1,\ldots,d\}.
\end{equation}
\end{definition}
It will be shown in Lemma~\ref{lemma:condition_for_ex_of_orthocentric_cone} below  that linearly independent vectors $v_1,\ldots, v_d$ satisfying~\eqref{eq:v_i_scalar_prod_intro} exist if and only if one of the following cases occurs:
\begin{enumerate}
\item[(A)] All numbers $\lambda_0, \lambda_1, \ldots, \lambda_d$ are positive.
\item[(B)] One of $\lambda_0, \lambda_1, \ldots,\lambda_d$ is negative, all others are positive, and $\lambda_0 + \lambda_1 + \ldots + \lambda_d < 0$.
\end{enumerate}

We shall show that the class of orthocentric cones is closed under taking polar cones (Theorem~\ref{theo:orthocentric_cone_dual}). By definition, it is closed under taking faces.  Furthermore, the normal and tangent cones at faces of any dimension of an orthocentric cone are again orthocentric after factoring out the lineality space (Theorem~\ref{theo:orthocentr_cones_tangent_normal}). Normal and tangent cones of orthocentric simplices are also orthocentric cones (Theorem~\ref{theo:cones_of_orthoc_simpl_nondegenerate_case}). This last fact  justifies the terminology ``orthocentric'' for these cones; another motivation is that the intersection of an orthocentric cone with the unit sphere forms a spherical simplex whose altitudes meet at a single point (although we shall not make use of this property here).

To express the solid angle of $C_d(\lambda_0; \lambda_1, \ldots, \lambda_d; \varepsilon_1, \ldots, \varepsilon_d)$, we introduce a function $\bg_d$, defined as an orthant probability for a Gaussian random vector whose covariance matrix is the Gram matrix given in~\eqref{eq:v_i_scalar_prod_intro}.

\begin{definition}[The function $\bg_d$]\label{def:g_d_function_intro}
Let $d\in \N$ and let $(\eta_1, \ldots, \eta_d)$ be a multivariate Gaussian random vector with mean $0$ and covariance matrix
\begin{equation}\label{eq:bg_d_cov_matrix_prod_intro}
\E [\eta_i \eta_j]
=
\frac{1}{\lambda_0} + \frac{\delta_{ij}}{\lambda_i}
=
\begin{cases}
\frac 1 {\lambda_0}, &\text{ if } i\neq j,\\
\frac 1 {\lambda_i} + \frac 1 {\lambda_0}, & \text{ if } i=j,
\end{cases}
\qquad
i,j\in \{1,\ldots,d\},
\end{equation}
where $\lambda_0, \lambda_1,\ldots, \lambda_d \in \R\backslash\{0\}$ satisfy (A) or (B). Then, for $\eps_1,\ldots, \eps_d\in \{\pm 1\}$ we define
$$
\bg_d(\lambda_0; \lambda_1, \ldots, \lambda_d;\eps_1, \ldots, \eps_d)
:=
\P\left[ \eps_1\eta_1\leq 0, \ldots, \eps_d\eta_d\leq 0 \right].
$$
By convention, for $d=0$ we put   $\bg_0(\lambda_0) = 1$ for all $\lambda_0 \in \R$.
\end{definition}

\begin{theorem}
For all $\lambda_0,\ldots, \lambda_d\in \R\backslash\{0\}$ that satisfy (A) or (B) and all $\eps_1,\ldots, \eps_d\in \{\pm 1\}$, the  solid angle of the cone $C:= C_d(\lambda_0; \lambda_1,\ldots,\lambda_d; \eps_1, \ldots, \eps_d)$ is given by
$$
\alpha(C)
=
\bg_d(-\lambda_0-\lambda_1-\ldots -\lambda_d; \lambda_1,\ldots, \lambda_d; \eps_1 \sgn(\lambda_1), \ldots, \eps_d \sgn(\lambda_d)).
$$
\end{theorem}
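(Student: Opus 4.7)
The strategy is to use the standard Gaussian characterization of solid angles together with an explicit computation of the inverse Gram matrix of the generators. Since $C = \pos(u_1,\ldots,u_d)$ with $u_i := \eps_i v_i$ is $d$-dimensional in $\lin(u_1,\ldots,u_d)$, and solid angles are defined relative to this linear span, I may identify $\lin(u_1,\ldots,u_d)$ with $\R^d$ and let $X$ be a standard Gaussian vector there. Then $\alpha(C) = \P[X\in C]$.

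\textbf{Step 1: Dual-basis characterization.} I would let $(w_1,\ldots,w_d)$ be the basis of $\lin(u_1,\ldots,u_d)$ dual to $(u_1,\ldots,u_d)$, i.e.\ $\langle u_i,w_j\rangle=\delta_{ij}$. Writing $X=\sum_i \langle X,w_i\rangle u_i$, we obtain $X\in C$ iff $\langle X,w_i\rangle\geq 0$ for every $i$. The random vector $Y:=(\langle X,w_1\rangle,\ldots,\langle X,w_d\rangle)$ is centered Gaussian with covariance matrix $G^{-1}$, where $G=(\langle u_i,u_j\rangle)_{i,j}$ is the Gram matrix of the generators. So
$$\alpha(C)=\P[Y_1\geq 0,\ldots,Y_d\geq 0],\qquad Y\sim\mathcal N(0,G^{-1}).$$

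\textbf{Step 2: Inversion of the Gram matrix.} Write $G=DBD$, where $D=\diag(\eps_1,\ldots,\eps_d)$ and $B_{ij}=\frac1{\lambda_0}+\frac{\delta_{ij}}{\lambda_i}$ is the Gram matrix of $(v_1,\ldots,v_d)$. Since $B=\Lambda^{-1}+\frac1{\lambda_0}\mathbf 1\mathbf 1^\top$ with $\Lambda=\diag(\lambda_1,\ldots,\lambda_d)$, the Sherman--Morrison formula yields
$$B^{-1}_{ij}=\lambda_i\delta_{ij}-\frac{\lambda_i\lambda_j}{\lambda_0+\lambda_1+\cdots+\lambda_d}.$$
The denominator $\lambda_0+\cdots+\lambda_d$ is positive in case (A) and strictly negative in case (B), so nonvanishing. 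Hence $(G^{-1})_{ij}=\eps_i\eps_j\,B^{-1}_{ij}$.

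\textbf{Step 3: Rescaling to match $\bg_d$ and sign symmetry.} Setting $\eta_i := Y_i/(\eps_i\lambda_i)$, I compute
$$\E[\eta_i\eta_j]=\frac{(G^{-1})_{ij}}{\eps_i\eps_j\lambda_i\lambda_j}=\frac{-1}{\lambda_0+\cdots+\lambda_d}+\frac{\delta_{ij}}{\lambda_i},$$
which is exactly the covariance appearing in Definition~\ref{def:g_d_function_intro} with parameters $\mu_0=-\lambda_0-\cdots-\lambda_d$ and $\mu_i=\lambda_i$. The event $\{Y_i\geq 0\}$ translates, after dividing by $|\lambda_i|>0$, to $\{\eps_i\sgn(\lambda_i)\eta_i\geq 0\}$. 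The symmetry $\eta\stackrel{d}{=}-\eta$ of centered Gaussians then converts the joint ``$\geq 0$'' event into the joint ``$\leq 0$'' event, producing exactly $\bg_d(-\lambda_0-\cdots-\lambda_d;\lambda_1,\ldots,\lambda_d;\eps_1\sgn(\lambda_1),\ldots,\eps_d\sgn(\lambda_d))$.

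\textbf{Main obstacle.} The Sherman--Morrison computation of $G^{-1}$ is entirely routine, and so is the distributional identification. The only point requiring genuine care is the bookkeeping in Step~3: one must track the signs $\eps_i\sgn(\lambda_i)$ introduced by the rescaling $Y_i\mapsto Y_i/(\eps_i\lambda_i)$, invoke the Gaussian reflection $\eta\leftrightarrow -\eta$ to convert the native ``$\geq 0$'' event of the dual-basis description into the ``$\leq 0$'' convention of $\bg_d$, and confirm that the argument also covers case (B), where some $\lambda_i<0$ and the denominator $\lambda_0+\cdots+\lambda_d$ is negative but still nonzero.
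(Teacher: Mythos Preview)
Your proposal is correct and follows essentially the same route as the paper. The paper packages the inverse Gram matrix computation into a separate polar-cone result (Theorem~\ref{theo:orthocentric_cone_dual}), whose proof computes $(V^\top V)^{-1}$ and rescales the dual generators by $1/\lambda_i$, and then invokes that result here; you perform the same inversion inline via Sherman--Morrison and rescale the random variables $Y_i\mapsto Y_i/(\eps_i\lambda_i)$ instead of the vectors, but the underlying linear algebra and the resulting orthant probability are identical.
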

The following theorem—one of our main results—provides an explicit formula for the function \( \bg_d \), and consequently, for the angles of orthocentric cones. To state the result, we require the analytic continuation of the cumulative distribution function of the standard normal distribution, which we denote by
\begin{equation}\label{eq:compl_std_normal_distr}
	\Phi: \CC \rightarrow \CC, \quad \Phi(z) = \frac{1}{2}+\frac{1}{\sqrt{2 \pi}} \int_{0}^{z} \eee^{-x^2/2} \dd x.
\end{equation}
The integral is taken along any contour connecting $0$ to $z$. Let $i= \sqrt{-1}$.  For $x\in \R$, it holds that
$$
\Phi(i x) = \frac{1}{2}+\frac{i}{\sqrt{2 \pi}} \int_0^x \eee^{t^2/2} \dd t.
$$
By expanding \( \eee^{-x^2/2} \) in~\eqref{eq:compl_std_normal_distr} into its Taylor series, we obtain
\begin{equation}\label{eq:taylor_series_Phi}
\Phi(z) = \frac{1}{2}+\frac{1}{\sqrt{2\pi}} \sum_{n=0}^{\infty} \frac{(-1)^n}{(2n+1)2^n n!}z^{2n+1}, \qquad z\in \CC.
\end{equation}
\begin{theorem}[Explicit formula for the function $\bg_d$]\label{theo:explicit_g_d_formula_intro}
~~
\begin{itemize}
\item[(i)]
Let $d \geq 0$ and $\lambda_1, \ldots, \lambda_d > 0$, and suppose that either $\lambda_0 > 0$ or $\lambda_0 < -(\lambda_1 + \ldots + \lambda_d)$. Then, for all $\eps_1, \ldots, \eps_d \in \{\pm 1\}$, we have
\begin{align}\label{eq:g_d_formula_intro_positive}
\bg_d(\lambda_0; \lambda_1, \ldots, \lambda_d; \eps_1, \ldots, \eps_d)
&=
\frac{1}{\sqrt{2 \pi}} \int_{-\infty}^{\infty} \prod_{j=1}^d \Phi \left(\eps_j \sqrt{\frac{\lambda_j}{\lambda_0}}x \right) \eee^{-x^2/2} \dd x.
\end{align}
For $\lambda_0<0$, we use the convention $\sqrt{\lambda_0} = i \sqrt{-\lambda_0}$.
\item[(ii)]
Let $d \in \N$ and $k \in \{1, \ldots, d\}$. Suppose that $\lambda_0, \ldots, \lambda_d$  satisfy $\lambda_i > 0$ for all $i \in \{0, \ldots, d\}\backslash\{k\}$, and $\lambda_k < 0$, with the additional condition that $\lambda_0 + \ldots + \lambda_d < 0$. Then, for all $\eps_1, \ldots, \eps_d \in \{\pm 1\}$, we have
\begin{multline*}
\bg_d(\lambda_0; \lambda_1,\ldots, \lambda_d;\eps_1, \ldots, \eps_d)
=
\sqrt{\frac{\lambda_0}{2\pi}} \Biggl(
\int_0^{\infty} \eee^{-\frac{\lambda_0 y^2}{2}} \prod_{\substack{j=1\\j\neq k}}^d \Phi \left(\eps_k \eps_j \sqrt{\lambda_j} y \right) \dd y
\\
+2 \int_0^{\infty} \eee^{\frac{\lambda_0 y^2}{2}} \Phi\left(-\sqrt{-\lambda_k} y \right) \Im \Bigg( \prod_{\substack{j=1\\j\neq k}}^d \Phi \left(i \eps_k \eps_j \sqrt{\lambda_j} y \Bigg) \right) \dd y
\Biggr).
\end{multline*}
\end{itemize}
\end{theorem}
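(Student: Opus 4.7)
For part~(i) with $\lambda_0 > 0$, the Gram matrix in~\eqref{eq:bg_d_cov_matrix_prod_intro} is realised by the i.i.d.\ representation $\eta_j = \xi_0/\sqrt{\lambda_0} + \xi_j/\sqrt{\lambda_j}$ with independent $\xi_0,\xi_1,\ldots,\xi_d \sim \mathcal{N}(0,1)$. Conditioning on $\xi_0$ makes the events $\{\eps_j\eta_j\leq 0\}$ independent with conditional probabilities $\Phi(-\eps_j\sqrt{\lambda_j/\lambda_0}\,\xi_0)$, and integrating over $\xi_0$ followed by the change of variable $x \mapsto -x$ yields the claimed formula. For the sub-case $\lambda_0 < -(\lambda_1+\cdots+\lambda_d)$, this probabilistic decomposition fails since $\sqrt{\lambda_0}$ is imaginary. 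Instead I would apply the Sherman--Morrison identity $\eta^T\Sigma^{-1}\eta = \sum_j\lambda_j\eta_j^2 - (\sum_j\lambda_j\eta_j)^2/\mu$, with $\mu := \lambda_0+\sum_j\lambda_j < 0$, together with the Fourier identity $e^{a^2/(2\mu)} = \sqrt{-\mu/(2\pi)}\int_{-\infty}^\infty e^{\mu y^2/2+iay}\,dy$ to decouple the coordinates. After Fubini, each coordinate integral $\int_{\eps_j\eta_j\leq 0}e^{-\lambda_j(\eta_j-iy)^2/2}d\eta_j$ is computed by shifting the contour from $\R-iy$ to the real axis via Cauchy's theorem. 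Collecting all factors, using $\det\Sigma = \mu/(\lambda_0\prod_j\lambda_j)$, and rescaling $x = \sqrt{-\lambda_0}\,y$ under the convention $\sqrt{\lambda_0} = i\sqrt{-\lambda_0}$ unifies both sub-cases into the formula of~(i).

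For part~(ii), applying the same procedure would require the $\eta_k$-integral $\int_{\eps_k\eta_k\leq 0}e^{-\lambda_k\eta_k^2/2+iy\lambda_k\eta_k}d\eta_k$, which diverges when $\lambda_k<0$. To sidestep this I first condition on $\eta_{-k} := (\eta_j)_{j\neq k}$: a Sherman--Morrison calculation shows that $\eta_{-k}$ is $(d-1)$-dimensional orthocentric Gaussian with parameters $(\lambda_0;(\lambda_j)_{j\neq k})$ and $\mu' := \lambda_0+\sum_{j\neq k}\lambda_j > 0$ (a case~(A) situation), while $\eta_k\mid\eta_{-k}$ is Gaussian with mean $S_{-k}/\mu'$ and positive variance $\mu/(\mu'\lambda_k)$, where $S_{-k} := \sum_{j\neq k}\lambda_j\eta_j$. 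Integrating out $\eta_k$ gives the intermediate identity
\begin{equation*}
\bg_d = \E\Big[\prod_{j\neq k}\ind_{\eps_j\eta_j\leq 0}\,\Phi\bigl(-\eps_k\sqrt{\lambda_k/(\mu\mu')}\,S_{-k}\bigr)\Big],
\end{equation*}
a weighted orthant probability for an orthocentric Gaussian in case~(A).

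To evaluate this expectation I represent $\Phi(\cdot) = \E[\ind_{W\leq\cdot}\mid S_{-k}]$ with an independent $W \sim \mathcal{N}(0,1)$ and apply the Sherman--Morrison/Fourier machinery of part~(i) (with $\mu' > 0$) to the $\eta_{-k}$-marginal. The resulting integrand in the auxiliary variable $y \in \R$ decays on the real axis only when $|\lambda_k|<\lambda_0$, whereas the standing assumption $\mu<0$ in part~(ii) forces $|\lambda_k| > \lambda_0$, so the contour of integration must be deformed into the complex plane. This deformation splits the integral into two pieces: a real-axis contribution that simplifies to $\sqrt{\lambda_0/(2\pi)}\int_0^\infty e^{-\lambda_0 y^2/2}\prod_{j\neq k}\Phi(\eps_k\eps_j\sqrt{\lambda_j}y)\,dy$, and an imaginary-axis contribution in which the branch choice $\sqrt{\lambda_k} = i\sqrt{-\lambda_k}$ produces the factor $\Phi(-\sqrt{-\lambda_k}\,y)$, while the Schwarz reflection $\overline{\Phi(z)} = \Phi(\bar z)$ forces the appearance of $\Im\bigl(\prod_{j\neq k}\Phi(i\eps_k\eps_j\sqrt{\lambda_j}y)\bigr)$ together with the factor $2$. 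I expect this contour manipulation in part~(ii)---justifying the path change under the integrand's growth, tracking the branch choices of $\sqrt{\lambda_k}$ and $\sqrt{\lambda_0}$, and verifying that the algebraic prefactors collapse to $\sqrt{\lambda_0/(2\pi)}$---to be the main technical hurdle, while by comparison the setup in part~(i) and the conditioning reduction in part~(ii) are comparatively routine.
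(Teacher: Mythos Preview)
Your treatment of part~(i) with $\lambda_0>0$ matches the paper exactly: the i.i.d.\ decomposition $\eta_j=\xi_0/\sqrt{\lambda_0}+\xi_j/\sqrt{\lambda_j}$ and conditioning on $\xi_0$ is precisely Step~1 of the paper's Lemma~\ref{lemma:proba_gaussian_greater_const}. For $\lambda_0<0$ you and the paper diverge: the paper does not recompute but instead shows that both the orthant probability (written via the explicit density) and the integral formula are analytic functions of $r=1/\lambda_0$ on the half-plane $\Re r>-1/(\lambda_1+\dots+\lambda_d)$, then invokes the identity theorem to propagate the equality from $r>0$. Your Hubbard--Stratonovich route (Sherman--Morrison for $\Sigma^{-1}$, then the Fourier identity for $e^{a^2/(2\mu)}$ with $\mu<0$) is a perfectly valid alternative and arguably more direct; the paper's analytic-continuation argument has the side benefit of yielding analyticity in $r$ as a byproduct, but that is not required for the theorem itself.

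For part~(ii) the two approaches are genuinely different. The paper does \emph{not} work with the Gaussian vector $(\eta_1,\dots,\eta_d)$ at all. Instead it uses the duality $\bg_d(\lambda_0;\dots)=\alpha(C_d(-\lambda_0-\dots-\lambda_d;\dots))$ to reinterpret $\bg_d$ as the solid angle of a concrete cone, realises that cone as the tangent cone of an obtuse orthocentric simplex at a vertex $e_d/\tau_d$ (Lemma~\ref{lemma:angle_difficult_orthoc_simplex}), writes the angle as $\P[\xi\in T]$ with the polar description, and conditions on the one-dimensional projection $\langle\xi,H\rangle$ onto the orthocenter direction. After this conditioning the remaining $(d-1)$-dimensional conditional probability is an orthant probability of Lemma~\ref{lemma:proba_gaussian_greater_const} type with $r=-1/p<0$, and the resulting double integral is handled by a change of variables and a Cauchy contour shift. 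Your plan---condition instead on $\eta_{-k}$ to obtain a case-(A) orthant expectation weighted by a $\Phi$ factor in $S_{-k}$---is a reasonable probabilistic alternative, and your computation of the conditional mean $S_{-k}/\mu'$ and variance $\mu/(\lambda_k\mu')$ is correct. The step you flag as the main hurdle really is one: after inserting the Hubbard--Stratonovich variable for the $\eta_{-k}$-density, the extra $\Phi(cS_{-k})$ factor still couples all coordinates, so you will need a second auxiliary integral (for $W$ or for $\Phi$ itself) before the $\eta_j$-integrals factorise, and only then does the contour deformation become available. This is doable but is a genuine additional layer compared to the paper's route, where the single geometric conditioning on $\langle\xi,H\rangle$ lands directly on a known orthant probability.
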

The proof of Theorem~\ref{theo:explicit_g_d_formula_intro} will be presented in Section~\ref{sec:explicit_formula_g_d}; see Theorem~\ref{theo:formula_for_bg_d(lambda_i,eps_i)} for Part~(i) and Theorem~\ref{theo:formula_for_bg_d(lambda_i,eps_i)_negative} for Part~(ii).

\subsection{Angles of orthocentric simplices}
We are now ready to state the results concerning the angles of orthocentric simplices. Recall that $\beta(F, S)$ and $\gamma(F, S)$ denote the internal and external angles of a simplex $S$ at a face $F$.

\begin{theorem}[Angles of acute orthocentric simplices]\label{theo:orthocentr_simpl_relint_tangent_normal_angles_intro}
Let $d\geq 2$ and fix some $\tau_0,\ldots,\tau_d>0$. Consider the simplex $S:= [e_0/\tau_0,\ldots, e_d/\tau_d] \subseteq \R^{d+1}$ and let $k\in \{0,\dots, d\}$. The internal and external angles of $S$ at its $k$-dimensional face $F:= [e_0/\tau_0,\ldots,e_k/\tau_k]$ are given by
\begin{align*}
\beta(F,S)
&=
\bg_{d-k}(-\tau_0^2-\ldots-\tau_d^2;\tau_{k+1}^2,\dots,\tau_d^2; 1, \ldots, 1),
\\
\gamma(F,S)
&=
\bg_{d-k}(\tau_0^2+\ldots+\tau_k^2;\tau_{k+1}^2,\dots,\tau_d^2; 1,\ldots ,1).
\end{align*}
Here, for $\ell\geq 0$, $\lambda_1, \ldots, \lambda_\ell>0$ and $\lambda_0\in \R$ satisfying either $\lambda_0>0$ or $\lambda_0<-\lambda_1-\ldots -\lambda_\ell$,
\begin{align}\label{eq:funct_g_formula_all_positive_intro}
\bg_\ell(\lambda_0; \lambda_1, \ldots, \lambda_\ell; 1, \ldots, 1)
&=
\frac{1}{\sqrt{2 \pi}} \int_{-\infty}^{\infty} \prod_{j=1}^\ell \Phi \left(\sqrt{\frac{\lambda_j}{\lambda_0}}x \right) \eee^{-x^2/2} \dd x.
\end{align}
\end{theorem}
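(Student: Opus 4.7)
The plan is to identify $N(F,S)$ and the quotient $T(F,S)/L$ (where $L$ is the lineality space of $T(F,S)$) as explicit orthocentric cones with prescribed parameters, apply the solid-angle formula for orthocentric cones stated just above to each, and finally specialize Theorem~\ref{theo:explicit_g_d_formula_intro}(i) to $\varepsilon_j = 1$ to obtain the integral representation~\eqref{eq:funct_g_formula_all_positive_intro}.

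The crux is the computation of the normal cone. The affine hull $\aff(S) = \{x \in \R^{d+1} : \sum_{i=0}^d \tau_i x_i = 1\}$ has linear direction $V := \{y \in \R^{d+1} : \sum_i \tau_i y_i = 0\}$ and outward normal $\tau := (\tau_0, \ldots, \tau_d)$ of squared length $T := \tau_0^2 + \cdots + \tau_d^2$. For each $j \in \{k+1, \ldots, d\}$, the facet of $S$ opposite the vertex $e_j/\tau_j$ contains $F$, and its outward normal in $V$ is a positive multiple of
$$
u_j := \frac{\tau_j}{T}\,\tau - e_j,
$$
so $N(F,S) = \pos(u_{k+1}, \ldots, u_d)$. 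A direct expansion gives $\langle u_j, u_\ell\rangle = -\tau_j \tau_\ell/T$ for $j \neq \ell$ and $\|u_j\|^2 = 1 - \tau_j^2/T$. Rescaling by $v_j := u_j/\tau_j$ yields $\langle v_j, v_\ell\rangle = -1/T + \delta_{j\ell}/\tau_j^2$, which matches~\eqref{eq:v_i_scalar_prod_intro} with $\lambda_0 = -T$ and $\lambda_j = \tau_j^2$ (after relabeling $\{k+1, \ldots, d\}$ as $\{1, \ldots, d-k\}$); the existence case~(B) holds since $\lambda_0 + \lambda_{k+1} + \cdots + \lambda_d = -(\tau_0^2 + \cdots + \tau_k^2) < 0$. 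Hence $N(F,S) = C_{d-k}(-T;\,\tau_{k+1}^2,\ldots,\tau_d^2;\,1,\ldots,1)$, and the solid-angle formula, together with the identity $-\lambda_0 - \lambda_{k+1} - \cdots - \lambda_d = \tau_0^2 + \cdots + \tau_k^2$, delivers the claimed expression for $\gamma(F,S)$.

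For $T(F,S)$ the cleanest route is polar duality: $T(F,S)/L$ and $N(F,S)$ are mutually polar in the $(d-k)$-dimensional ambient space $L^\perp \cap V$, so by Theorem~\ref{theo:orthocentric_cone_dual} one obtains $T(F,S)/L = C_{d-k}(\tau_0^2 + \cdots + \tau_k^2;\,\tau_{k+1}^2,\ldots,\tau_d^2;\,1,\ldots,1)$ (case~(A)), whose solid angle is $\bg_{d-k}(-T;\,\tau_{k+1}^2,\ldots,\tau_d^2;\,1,\ldots,1)$, matching $\beta(F,S)$. The main obstacle is the bookkeeping inside the non-orthogonally embedded hyperplane $\aff(S)$: correctly choosing outward rather than inward facet normals, tracking the rescaling $v_j = u_j/\tau_j$ needed to cast the Gram matrix into the canonical form~\eqref{eq:v_i_scalar_prod_intro}, and verifying the appropriate existence case ((A) for $T(F,S)/L$, (B) for $N(F,S)$). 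Invoking polar duality sidesteps the parallel but messier alternative of directly projecting the tangent-cone generators $e_j/\tau_j - p$ (for $p$ in the relative interior of $F$ and $j > k$) onto $L^\perp \cap V$.
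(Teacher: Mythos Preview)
Your approach is correct and essentially the same as the paper's: both identify the normal cone via explicit facet normals, compute the Gram matrix to recognize an orthocentric cone, apply the solid-angle formula, and obtain the tangent cone by polar duality (your $u_j = (\tau_j/T)\tau - e_j$ is precisely $\tau_j(w - e_j/\tau_j)$, the paper's orthocenter-based normal). One small imprecision: since $S$ is only $d$-dimensional in $\R^{d+1}$, the full normal cone $N(F,S)$ in $\R^{d+1}$ is the orthogonal direct sum of the line $\R\tau$ and your $\pos(u_{k+1},\ldots,u_d)$, not just the latter---this does not affect the solid angle, but you should say so.
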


If $\tau_0 = \ldots = \tau_d > 0$, the simplex $S$ becomes regular, and Theorem~\ref{theo:orthocentr_simpl_relint_tangent_normal_angles_intro} recovers known formulas for the internal and external angles of regular simplices; see~\cite[Section~4]{rogers}, \cite[Lemma~4]{vershik_sporyshev_asymptotic_faces_random_polyhedra1992}, \cite[Proposition~1.2]{kabluchko_zaporozhets_absorption}, \cite[pp.~408--409]{henk_richter_gebbert_polytopes_review}, \cite{kabluchko_zaporozhets_expected_volumes_18}. Closely related results are stated in terms of orthant probabilities in~\cite{steck_orthant_equicorrelated} and~\cite{steck_owen_equicorr_multivar_normal}. Analogous formulas for the internal and external angles of regular crosspolytopes can be found in~\cite[Theorem~2.1]{betke_henk}, \cite[Lemma~3.1]{henk_cifre_notes_roots_steiner}, \cite[Lemma~2.2]{henk_cifre_intrinsic_successive_radii}, \cite{boeroeczky_henk}, and~\cite{kabluchko_seidel_convex_cones_spanned_reg_poly}.

Motivated by the Youden problem concerning the position of the empirical mean of an i.i.d.\ Gaussian sample among the order statistics, \citet[Section~2.B]{kuchelmeister_phd} (see also~\cite{kuchelmeister_probability_linear_separability_intrinsic}) recently computed certain orthant probabilities that are closely related to~\eqref{eq:g_d_formula_intro_positive} and  the angles appearing in Theorem~\ref{theo:orthocentr_simpl_relint_tangent_normal_angles_intro}.
We shall comment on this connection in Section~\ref{subsec:preparation_orthant_probab_for_orthocentric_simpl}.

We now turn to obtuse simplices. These give rise to two distinct types of angles, for neither of which, to the best of our knowledge, related results are available in the existing literature.

\begin{theorem}[Angles of obtuse orthocentric simplices]\label{theo:cones_angles_orthoc_simplices_outside_intro}
For $d\geq 2$, $\tau_0, \tau_1, \ldots, \tau_d >0$, consider the simplex
$$
S = \left[\frac{\tau_0 e_0 + \ldots + \tau_d e_d}{\tau_0^2 + \ldots + \tau_d^2},\frac{e_1}{\tau_1}, \ldots, \frac{e_d}{\tau_d}\right].
$$
Let $k \in \{0,\ldots, d-1\}$.
\begin{itemize}
\item[(i)]
The  angles of $S$ at its $k$-dimensional face $F=[\frac{\tau_0 e_0 + \ldots + \tau_d e_d}{\tau_0^2 + \ldots + \tau_d^2}, \frac{e_1}{\tau_1}, \ldots, \frac{e_k}{\tau_k}]$ are given by
\begin{align*}
\beta(F,S)
&=
\bg_{d-k}\left(\tau_0^2; \tau_{k+1}^2, \ldots, \tau_d^2; 1, \ldots, 1 \right),
\\
\gamma(F,S)
			&=
			\bg_{d-k}\left( -\tau_0^2 -\tau_{k+1}^2-\ldots -\tau_d^2; \tau_{k+1}^2, \ldots, \tau_d^2; 1, \ldots, 1 \right).
		\end{align*}
where $\bg_\ell$ is given by~\eqref{eq:funct_g_formula_all_positive_intro}.
\item[(ii)]
The angles of $S$ at its $k$-dimensional face $F=[\frac{e_1}{\tau_1}, \ldots, \frac{e_{k+1}}{\tau_{k+1}}]$ are given by
\begin{align*}
\beta(F,S)
&=
\bg_{d-k}\left( \tau_0^2; -\tau_0^2 -\ldots - \tau_d^2, \tau_{k+2}^2, \ldots, \tau_d^2; -1,1, \ldots, 1 \right),
\\
\gamma(F,S)
&=
\bg_{d-k}\left( \tau_1^2 +\ldots +\tau_{k+1}^2; -\tau_0^2 -\ldots - \tau_d^2, \tau_{k+2}^2, \ldots, \tau_d^2; 1, \ldots, 1 \right).
\end{align*}
Here,  for $\ell\in \N$,  $\lambda_0>0$, $\lambda_2>0,\ldots, \lambda_\ell>0$,  $\lambda_1<-(\lambda_0+\lambda_2+ \ldots + \lambda_\ell)$ and $\eps_1\in \{\pm 1\}$, the function $\bg_\ell$ is given by
\begin{multline*}
\bg_\ell(\lambda_0; \lambda_1,\ldots, \lambda_\ell;\eps_1, 1, \ldots,1)
=
\sqrt{\frac{\lambda_0}{2\pi}} \Biggl(
\int_0^{\infty} \eee^{-\frac{\lambda_0 y^2}{2}} \prod_{\substack{j=2}}^\ell \Phi \left(\eps_1 \sqrt{\lambda_j} y \right) \dd y
\\
+2 \int_0^{\infty} \eee^{\frac{\lambda_0 y^2}{2}} \Phi\left(-\sqrt{-\lambda_1} y \right) \Im \Bigg( \prod_{\substack{j=2}}^\ell \Phi \left(  i \eps_1  \sqrt{\lambda_j} y \Bigg) \right) \dd y
\Biggr).
\end{multline*}
\end{itemize}
\end{theorem}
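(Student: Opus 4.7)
The plan is to reduce the theorem to three ingredients already available in the paper: Theorem~\ref{theo:cones_of_orthoc_simpl_nondegenerate_case}, which identifies $T(F,S)$ and $N(F,S)$ as orthocentric cones with explicit parameters; the solid-angle formula $\alpha(C_d(\lambda_0;\lambda_1,\ldots,\lambda_d;\eps_1,\ldots,\eps_d)) = \bg_d(-\lambda_0-\lambda_1-\ldots-\lambda_d;\lambda_1,\ldots,\lambda_d;\eps_1\sgn(\lambda_1),\ldots,\eps_d\sgn(\lambda_d))$ stated just before Theorem~\ref{theo:explicit_g_d_formula_intro}; and Theorem~\ref{theo:explicit_g_d_formula_intro} itself, which supplies the integral expressions for $\bg_\ell$. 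Once the cones $T(F,S)$ and $N(F,S)$ are written as $C_{d-k}(\mu_0;\mu_1,\ldots,\mu_{d-k};\sigma_1,\ldots,\sigma_{d-k})$ with explicit $\mu$'s and $\sigma$'s, converting to $\bg_{d-k}$ is mechanical, and the integral formulas follow from Theorem~\ref{theo:explicit_g_d_formula_intro}(i) for part~(i) and Theorem~\ref{theo:explicit_g_d_formula_intro}(ii) for part~(ii).

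For part~(i), where the face $F$ contains the special vertex $w=(\sum_i\tau_i e_i)/(\sum_i\tau_i^2)$, I expect Theorem~\ref{theo:cones_of_orthoc_simpl_nondegenerate_case} to yield
\[
T(F,S) = C_{d-k}(-\tau_0^2-\tau_{k+1}^2-\ldots-\tau_d^2;\,\tau_{k+1}^2,\ldots,\tau_d^2;\,1,\ldots,1),
\]
which falls into admissibility regime~(B), together with its polar
\[
N(F,S) = C_{d-k}(\tau_0^2;\,\tau_{k+1}^2,\ldots,\tau_d^2;\,1,\ldots,1),
\]
which lies in regime~(A), the polarity being furnished by Theorem~\ref{theo:orthocentric_cone_dual}. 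Applying the $\alpha\mapsto\bg$ substitution then gives exactly the two $\bg_{d-k}$-values stated in the theorem. Both $\mu_0$-choices satisfy the hypothesis of Theorem~\ref{theo:explicit_g_d_formula_intro}(i) (either $\mu_0>0$ or $\mu_0<-\sum_j\mu_j$), so the integral representation~\eqref{eq:funct_g_formula_all_positive_intro} applies verbatim.

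For part~(ii), where $F=[e_1/\tau_1,\ldots,e_{k+1}/\tau_{k+1}]$ does not contain $w$, one generator of $T(F,S)$ must point toward $w$ and contributes a \emph{negative} entry to the parameter list, reflecting the obtuseness of $S$. The expected identification is
\[
T(F,S) = C_{d-k}(\tau_1^2+\ldots+\tau_{k+1}^2;\,-\tau_0^2-\ldots-\tau_d^2,\tau_{k+2}^2,\ldots,\tau_d^2;\,1,1,\ldots,1),
\]
\[
N(F,S) = C_{d-k}(\tau_0^2;\,-\tau_0^2-\ldots-\tau_d^2,\tau_{k+2}^2,\ldots,\tau_d^2;\,-1,1,\ldots,1),
\]
both satisfying condition~(B) (the sums $\sum_j\mu_j$ collapse to $-\tau_0^2$ and $-\tau_1^2-\ldots-\tau_{k+1}^2$, respectively). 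The main obstacle is the sign bookkeeping: the factor $\sgn(\mu_1)=-1$ in the $\alpha\mapsto\bg$ formula flips the first sign, so that the tuple $(1,1,\ldots,1)$ attached to $T(F,S)$ appears as $(-1,1,\ldots,1)$ in the $\bg$-call for $\beta(F,S)$, while the tuple $(-1,1,\ldots,1)$ attached to $N(F,S)$ appears as $(1,\ldots,1)$ in the $\bg$-call for $\gamma(F,S)$; this matches the statement exactly. The integral expression for $\bg_\ell$ reproduced in the theorem is then Theorem~\ref{theo:explicit_g_d_formula_intro}(ii) specialized to the case where the distinguished negative index is $k=1$ and $\eps_j=+1$ for $j\geq 2$, in which regime the general factor $\eps_k\eps_j$ collapses to $\eps_1$, as displayed.
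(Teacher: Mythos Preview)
Your proposal is correct and follows essentially the same route as the paper: the paper's Theorem~\ref{theo:cones_angles_orthoc_simplices_outside} computes $c=1/\tau_0^2$ and $\mu_0=-\tau_0^2-\ldots-\tau_d^2$, $\mu_i=\tau_i^2$ for $i\geq 1$, plugs these into Theorem~\ref{theo:cones_of_orthoc_simpl_nondegenerate_case} to obtain exactly the orthocentric cones you write down, and then applies Theorem~\ref{theo:angle_pos(eps_i_v_i)_via_g_d(lambda_i,eps_i)} (your ``$\alpha\mapsto\bg$'' formula) to extract the $\bg_{d-k}$-expressions, with the explicit integrals coming from Theorems~\ref{theo:formula_for_bg_d(lambda_i,eps_i)} and~\ref{theo:formula_for_bg_d(lambda_i,eps_i)_negative}. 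Two small points of precision: you should record that $T(F,S)$ is the direct orthogonal sum of the lineality space of $F$ with the stated orthocentric cone (not equal to it), and you should make the $\mu_i$-computation explicit rather than write ``I expect'', since that is the only place where any arithmetic specific to the obtuse simplex enters.
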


\begin{theorem}[Angles of rectangular simplices]\label{theo:angles_rectangular_simpl_intro}
For $d\geq 2$ and $\tau_1, \ldots, \tau_d>0$ consider the simplex $S=[0, e_1/\tau_1, \ldots, e_d/\tau_d]$. Let also $k\in \{0, \ldots, d-1\}$.
\begin{enumerate}
\item[(i)] The  angles of $S$ at its $k$-dimensional face $F=[0, e_1/\tau_1, \ldots, e_k/\tau_k]$ are given by
$
\beta(F,S) = 1/2^{d-k} = \gamma(F,S).
$
\item[(ii)] The angles of $S$ at its $k$-dimensional face $F=[e_1/\tau_1, \ldots, e_{k+1}/\tau_{k+1}]$ are given by
\begin{align*}
\beta(F,S)
=&
\lim_{\lambda_1 \rightarrow \pm \infty}\bg_{d-k} \left( -(\tau_1^2 + \ldots + \tau_d^2)-\lambda_1; \lambda_1, \tau_{k+2}^2, \ldots, \tau_d^2; \sgn(\lambda_1),1,\ldots, 1 \right)\\
=&
\frac{1}{2^{d-k}} + \frac{1}{\pi} \int_0^{\infty} \frac{\eee^{-(\tau_1^2+\ldots + \tau_d^2)y^2/2}}{y} \Im\left( \prod_{j=k+2}^d \Phi\left(-i \tau_j y\right) \right) \dd y,
\\
\gamma(F,S)
=&
\lim_{\lambda_1 \rightarrow \pm \infty} \bg_{d-k}\left( \tau_1^2 +\ldots +\tau_{k+1}^2; \lambda_1, \tau_{k+2}^2, \ldots, \tau_d^2; 1, \ldots, 1 \right)\\
=&
\frac{1}{\sqrt{2\pi}} \int_0^{\infty} \prod_{j=k+2}^d \Phi\left( \frac{\tau_j}{\sqrt{\tau_1^2 + \ldots + \tau_{k+1}^2}}x \right) \eee^{-x^2/2} \dd x.
\end{align*}
\end{enumerate}
\end{theorem}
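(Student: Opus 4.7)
The plan is to realize $S = [0, e_1/\tau_1, \ldots, e_d/\tau_d]$ as the Hausdorff limit, as $\tau_0 \to \infty$, of the obtuse orthocentric simplex $S^{(\tau_0)}$ from~\eqref{eq:canonical_obtuse_orthocentric_simpl_intro} (whose first vertex tends to $0$) and equivalently of the acute simplex from~\eqref{eq:canonical_acute_orthocentric_simpl_intro}. Since the $S^{(\tau_0)}$ remain nondegenerate $d$-simplices with vertices converging to those of $S$, all internal and external angles at matching faces are continuous in $\tau_0$, and I would pass to the limit $\tau_0 \to \infty$ in the formulas of Theorem~\ref{theo:cones_angles_orthoc_simplices_outside_intro}. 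Setting $\lambda_1 = -\tau_0^2 - \tau_1^2 - \ldots - \tau_d^2$ in the obtuse formula recovers the $\lambda_1 \to -\infty$ form of the statement, while $\lambda_1 = \tau_0^2$ in Theorem~\ref{theo:orthocentr_simpl_relint_tangent_normal_angles_intro} (after permuting the index $0$ through the complement of $\{1, \ldots, k+1\}$) yields the $\lambda_1 \to +\infty$ form; both limits must coincide since they compute the same geometric angle of $S$.

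For Part~(i), Theorem~\ref{theo:cones_angles_orthoc_simplices_outside_intro}(i) gives $\beta(F^{(\tau_0)}, S^{(\tau_0)}) = \bg_{d-k}(\tau_0^2; \tau_{k+1}^2, \ldots, \tau_d^2; 1, \ldots, 1)$ and an analogous expression for $\gamma$. I would evaluate the limit via Definition~\ref{def:g_d_function_intro}: as $|\lambda_0| \to \infty$, the covariance matrix $1/\lambda_0 + \delta_{ij}/\lambda_i$ converges to $\operatorname{diag}(1/\lambda_1, \ldots, 1/\lambda_\ell)$, the Gaussian coordinates decorrelate, and the orthant probability converges to $1/2^{d-k}$.

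For the $\gamma$ formula in Part~(ii), take $F^{(\tau_0)} = [e_1/\tau_1, \ldots, e_{k+1}/\tau_{k+1}]$ and apply Theorem~\ref{theo:cones_angles_orthoc_simplices_outside_intro}(ii) with $\lambda_0 = \tau_1^2 + \ldots + \tau_{k+1}^2$ and $\lambda_1 = -\tau_0^2 - \alpha'$, where $\alpha' := \tau_1^2 + \ldots + \tau_d^2$. The first term of the two-term representation is $\tau_0$-independent and, after the substitution $x = \sqrt{\lambda_0}\,y$, coincides with the asserted integral. The second term carries the factor $\Phi(-\sqrt{\tau_0^2 + \alpha'}\,y) \to 0$ pointwise; a dominated-convergence argument using the Chernoff bound $\Phi(-z) \leq e^{-z^2/2}$ for $z \geq 0$ together with $\tau_0$-independent bounds on the remaining factors shows that this term vanishes as $\tau_0 \to \infty$.

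The $\beta$ formula in Part~(ii) is the heart of the proof. Starting from $\beta(F^{(\tau_0)}, S^{(\tau_0)}) = \bg_{d-k}(\tau_0^2; -\tau_0^2 - \alpha', \tau_{k+2}^2, \ldots, \tau_d^2; -1, 1, \ldots, 1)$ and the same two-term representation, the first term with its prefactor $\tau_0/\sqrt{2\pi}$ becomes $(1/\sqrt{2\pi}) \int_0^\infty e^{-x^2/2} \prod_{j=k+2}^d \Phi(-\tau_j x/\tau_0)\,dx$ after the substitution $x = \tau_0 y$, and converges to $1/2^{d-k}$. The second term reads $(2\tau_0/\sqrt{2\pi}) \int_0^\infty e^{\tau_0^2 y^2/2} \Phi(-\sqrt{\tau_0^2 + \alpha'}\,y)\,\Im\bigl(\prod_{j=k+2}^d \Phi(-i\tau_j y)\bigr)\,dy$, and the key input is Mills' asymptotic $z e^{z^2/2} \Phi(-z) \to 1/\sqrt{2\pi}$ as $z \to \infty$: applied to $z = \sqrt{\tau_0^2 + \alpha'}\,y$, it gives $\tau_0 e^{\tau_0^2 y^2/2} \Phi(-\sqrt{\tau_0^2 + \alpha'}\,y) \to e^{-\alpha' y^2/2}/(y\sqrt{2\pi})$ pointwise, producing the advertised $(1/\pi) \int_0^\infty (e^{-\alpha' y^2/2}/y)\, \Im\bigl(\prod_{j=k+2}^d \Phi(-i\tau_j y)\bigr)\,dy$. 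The main obstacle is justifying the exchange of limit and integral here: I would use the sharp Mills bound $\Phi(-z) \leq e^{-z^2/2}/(z\sqrt{2\pi})$ for $z \geq 1$ to produce a $\tau_0$-uniform integrable dominating function on the tail $y \geq 1/\sqrt{\tau_0^2 + \alpha'}$, and the Taylor expansion $\Phi(-i\tau_j y) = 1/2 - i\tau_j y/\sqrt{2\pi} + O(y^3)$ to show that $\Im(\prod_{j=k+2}^d \Phi(-i\tau_j y)) = O(y)$ as $y \to 0$, which cancels the $1/y$ singularity so that the limit integrand is bounded at the origin.
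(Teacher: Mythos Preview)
Your proposal follows essentially the same strategy as the paper: realize the rectangular simplex as the limit of the obtuse family $S^{(\tau_0)}$ (and of the acute family for the $\lambda_1\to+\infty$ direction), read off the angles from Theorem~\ref{theo:cones_angles_orthoc_simplices_outside_intro}, and pass to the limit in the two-term representation of $\bg_{d-k}$ using Mills' asymptotics and dominated convergence. The paper organizes the dominated-convergence bookkeeping into a separate lemma (treating the near-$0$ and tail regions exactly as you outline, via the Taylor expansion $\Im\prod_j\Phi(-i\tau_j y)=O(y)$ and the Mills inequality), and justifies the continuity of the angles by showing convergence of the normalized Gram matrices of the tangent cones together with nonsingularity of the limiting Gram matrix, citing a continuity result for solid angles; you assert this continuity somewhat casually, so that step would need to be fleshed out.

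One small difference: for Part~(i) the paper does not take limits at all but computes directly that the tangent and normal cones at faces containing $0$ are orthants, which gives $1/2^{d-k}$ immediately. Your limiting argument via decorrelation of the covariance matrix is also correct and has the virtue of treating both parts uniformly.
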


Proofs of Theorems~\ref{theo:orthocentr_simpl_relint_tangent_normal_angles_intro}, \ref{theo:cones_angles_orthoc_simplices_outside_intro} and~\ref{theo:angles_rectangular_simpl_intro} will be given in Section~\ref{sec:angles_orthocentric_cones_and_oblique_simplices} (Theorems~\ref{theo:orthocentr_simpl_relint_tangent_normal_angles} and~\ref{theo:cones_angles_orthoc_simplices_outside}) and Sections~\ref{sec:explicit_formula_g_d}, \ref{sec:right_angled_simplices}.

\section{Facts from convex geometry} \label{sec:facts_convex_geometry}

\subsection{General notation}

For $d\geq 1$ we let $\R^d$ be the $d$-dimensional Euclidean space with the standard scalar product $\lan\,\cdot\,,\,\cdot\,\ran$ and the associated Euclidean norm $\|\,\cdot\,\|$. We let $\BB^d=\{x\in\R^d:\|x\|\leq 1\}$ be the Euclidean unit ball and $\lambda_d$ denote the $d$-dimensional Lebesgue measure.

The \textit{convex} (respectively, \textit{positive}, \textit{linear}, \textit{affine}) \textit{hull} of a set $A\subseteq \R^d$ is the smallest convex set (respectively, convex cone, linear subspace, affine subspace) containing the set $A$ and is denoted by $\conv A$ (respectively, $\pos A$, $\lin A$, $\aff A$). The convex hull of  finitely  many points $x_1,\ldots,x_n$ is also denoted by $[x_1,\ldots,x_n]$. For background information on convex geometry we refer, for example, to~\cite{barvinok_book} and~\cite{SchneiderBook}.

For a set $A\subseteq \R^d$ we denote by $\relint A$ its relative interior, that is the interior taken with respect to $\aff A$ as the ambient space. Further,  $\relbd A:= A\bsl \relint A$ denotes the relative boundary of $A$.

As usual, $\P[B]$ and $\E X$ denote the probability of the event $B$ and the expectation of a random vector $X$, respectively. For two random vectors $X$ and $Y$ we write $X\overset{d}{=}Y$ if $X$ and $Y$ have the same probability law.

\subsection{Polytopes and their faces}
A \textit{polytope} is a convex hull of finitely many points, while a \textit{polyhedron} is a (non-empty) intersection of finitely many closed half-spaces in $\R^d$. Equivalently, a polytope can be defined as a bounded polyhedron. The dimension $\dim P$ of a polyhedron $P$ is the dimension of its affine hull $\aff P$.
The \textit{$f$-vector} of a $d$-dimensional polyhedron $P\subseteq\R^d$ is defined by
$$
\mathbf{f} (P) := (f_0(P), \ldots,f_{d-1}(P)),
$$
where $f_k(P)$ is the number of $k$-dimensional faces of $P$.
The set of $k$-dimensional faces of a polyhedron $P$ is denoted by $\cF_k(P)$, so that $f_k(P)$ is the cardinality of $\cF_k(P)$. For background information on polytopes, polyhedra, and related topics, we refer the reader to the books~\cite{GruenbaumBook,ziegler_book_lec_on_poly} as well as to~\cite{henk_richter_gebbert_polytopes_review}.

\subsection{Cones and solid angles}
In this paper, the term cone always refers  to a \textit{polyhedral cone}, that is an intersection of finitely many closed half-spaces whose boundaries pass through the origin. In particular, any polyhedral cone is a polyhedron. Equivalently, a polyhedral cone can be defined as a positive hull $\pos (v_1,\ldots, v_n)$ of finitely many vectors $v_1,\ldots, v_n\in \R^d$. The \textit{solid angle} of a cone $C\subseteq \R^d$ is defined as
$$
\alpha(C) := \P[\xi\in C],
$$
where $\xi$ is a random vector having any rotationally invariant distribution on $\lin C$, the linear hull of $C$. Examples of such distributions include the uniform distribution on the unit ball $\BB^d \cap \lin C$ in $\lin C$, the uniform distribution on the unit sphere in $\lin C$ or the standard normal distribution on $\lin C$.   The \textit{polar cone} of $C$ is defined by
$$
C^\circ := \{v\in \R^d\colon \langle v, z\rangle\leq 0 \text{ for all } z\in C\}.
$$
The \textit{tangent cone} $T(F,P)$ at a face $F$ of a polyhedron $P\subseteq\R^d$ is defined as
$$
T(F,P) := \{v\in\R^d\colon  x_0 + v\eps \in P \text{ for some } \eps>0\},
$$
where $x_0$ is any point in the relative interior of $F$ (the definition does not depend on the choice of $x_0$). The \textit{normal cone} of $F$ is the polar to the tangent cone, that is
$$
N(F,P) := T^\circ (F,P) = \{v\in \R^d\colon \langle v, z-x_0\rangle \leq 0 \text{ for all } z\in P\}.
$$
The \textit{internal} and \textit{external} angles at a face $F$ of $P$ are defined as the solid angles of the tangent and the normal cones, respectively:
$$
\beta(F,P) := \alpha(T(F,P)), \qquad \gamma(F,P) := \alpha (N(F,P)).
$$

The normal cone can be described as the set of all outer normal vectors. Consider a polyhedron $P \subseteq \R^d$ and a point $x$ on its relative boundary. A vector $u\in \R^d$ is called an \emph{outer normal vector} of $P$ at $x$, if it satisfies $\lan u, y-x \ran \leq 0$ for all $y \in P$. Let $F\neq P$ be a face of $P$. Then, for every two points $x,\widetilde{x} \in \relint F$, each outer normal vector of $P$ at $x$ is also an outer normal vector of $P$ at $\widetilde{x}$. An outer normal vector of $P$ at the face $F$ is the normal vector of $P$ at any (and hence all) points $x \in \relint F$. Then, the normal cone $N(F,P)$ is the set of all outer normal vectors of $P$ at $F$.

\subsection{Conic intrinsic volumes}
Let $C\subseteq \R^N$ be a $d$-dimensional polyhedral cone. For all $x\in \R^N$ let $\pi_C(x)$ be the unique point in $C$ for which the function $C \rightarrow [0,\infty), y\mapsto \|y-x\|$ attains its minimum. For a face $F$ of $C$, define
$$
\upsilon_F(C) := \P[\pi_C(g) \in \relint F]
$$
where $g$ is a standard Gaussian random vector in $\R^N$.

\begin{definition}[Conic intrinsic volumes]
For a $d$-dimensional polyhedral cone $C$, the conic intrinsic volumes $\upsilon_0(C), \ldots, \upsilon_d(C)$ are defined as
\begin{equation}\label{eq:conic_intr_vol}
\upsilon_k(C)
=
\sum_{F\in \cF_k(C)} \upsilon_F(C)
=
\sum_{F\in \cF_k(C)} \alpha(F) \alpha(N(F, C)),
\quad k\in \{0, \ldots, d\}.
\end{equation}
\end{definition}
It follows that $(\upsilon_0(C)), \ldots, \upsilon_d(C))$ is a probability distribution on $\{0, \ldots, d\}$. For $i\notin\{0,\ldots, d\}$, one defines $\upsilon_i(C):=0$.
For further details on conic intrinsic volumes, we refer the reader to~\cite[Chapter 2.3]{SchneiderConvexCones}, as well as to~\cite{AmelunxenLotzDCG17} and~\cite{amelunxen_lotz_mccoy_tropp_living_on_the_edge}.

\section{Orthocentric simplices}\label{sec:orthocentric_simpl}
Let $v_0,v_1,\ldots, v_d \in \R^N$ be $d+1$ affinely independent points in $\R^N$, where $N\in \{2,3,\ldots\}$ and $d\in \{2,\ldots, N\}$.  Consider the $d$-dimensional simplex $S=[v_0,v_1, \ldots, v_d]$. The \emph{altitude} of a vertex $v_i$ is the line ($=$one-dimensional affine subspace) that contains $v_i$ and is orthogonal to $\aff(v_0, \ldots, v_{i-1}, v_{i+1}, \ldots, v_d)$, the affine hull of the remaining vertices. If all altitudes of $S$ intersect in a single point, this point is called the \emph{orthocenter} of $S$ and $S$ is called an \emph{orthocentric simplex}. Note that the orthocenter is always contained in the affine hull of $S$.

\subsection{Classification of orthocentric simplices}
The next proposition collects some essential   properties of orthocentric simplices.
\begin{proposition}\label{prop:classification_orthocentr_simplices}
Let $v_0,\ldots, v_d \in \R^N$ be affinely independent points, $N\geq d\geq 2$.
\begin{itemize}
\item[(a)] The simplex $S:=[v_0,v_1, \ldots, v_d]$ is orthocentric with orthocenter $w\in \aff S$ if and only if the value of $\lan v_i-w , v_j-w \ran$ does not depend on the choice of $0\leq i \neq j \leq d$.
\end{itemize}
Now, assume that $S$ is orthocentric with orthocenter $w\in \aff S$ and $c\in \R$ satisfies $\lan v_i-w , v_j-w \ran = c$ for  all $0\leq i \neq j \leq d$. Then, the following hold.
\begin{itemize}
\item[(b)] If $w\in \relint S$, then $c<0$.
\item[(c)] If $w\in \relbd S$, then $c=0$ and $w$ is one of the vertices of $S$.
\item[(d)] If $w\in \aff S \setminus S$, then $c>0$. Moreover, $\|v_{i_0}-w\|^2<c$ for exactly one $i_0 \in \{0,\ldots,d\}$ and $\|v_i-w\|^2>c$ for all other $i\in \{0,\ldots, d\}\bsl\{i_0\}$.
\item[(e)] In cases (b) and (d) (that is, if $c\neq 0$) the orthocenter $w$ can be represented as the following affine combination of $v_0,\ldots, v_d$:
\begin{equation}\label{eq:orthocenter_explicit}
w
=
\sum_{i=0}^d \frac{c}{c-\|v_i-w\|^2} v_i,
\qquad
\sum_{i=0}^d \frac{c}{c-\|v_i-w\|^2} = 1.
\end{equation}
\end{itemize}
\end{proposition}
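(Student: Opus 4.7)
The plan is to prove (a) by unpacking what it means for $w$ to lie on every altitude, and then to deduce (b)--(e) from a single scalar identity governing the affine coordinates of $w$.

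For (a), the altitude through $v_i$ consists of all $x \in \R^N$ with $x - v_i$ perpendicular to $\aff(v_0, \ldots, v_{i-1}, v_{i+1}, \ldots, v_d)$, equivalently $\langle x - v_i, v_j - v_k\rangle = 0$ for every $j, k \in \{0,\ldots,d\}\setminus\{i\}$. So $w$ lies on every altitude if and only if for each $i$ the value $\langle v_i - w, v_j - w\rangle$ is independent of $j \neq i$; denoting this constant $c_i$ and invoking symmetry of the scalar product forces $c_0 = c_1 = \cdots = c_d =: c$, which is (a).

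For (b)--(e) I would write $w = \sum_{i=0}^d \mu_i v_i$ as the unique affine combination with $\sum \mu_i = 1$, set $u_i = v_i - w$ and $r_i^2 = \|u_i\|^2$, and pair the resulting relation $\sum \mu_i u_i = 0$ with each $u_j$. Since $\langle u_i, u_j\rangle = c$ for $i \neq j$ and $\langle u_j, u_j\rangle = r_j^2$, this yields the master identity
\begin{equation}\label{eq:master_identity_plan}
\mu_j (r_j^2 - c) = -c, \qquad j \in \{0,\ldots,d\},
\end{equation}
from which I read off everything. If $c = 0$ then $\mu_j r_j^2 = 0$; since $\sum \mu_j = 1$, some $\mu_{j_0} \neq 0$, so $r_{j_0} = 0$, i.e., $w = v_{j_0}$ is a vertex. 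If $c < 0$ then $r_j^2 - c > 0$, so $\mu_j = c/(c - r_j^2) > 0$ for every $j$ and $w \in \relint S$. If $c > 0$, Cauchy--Schwarz gives $c = \langle u_i, u_j\rangle \leq r_i r_j$ for every $i \neq j$, whence $r_i^2 + r_j^2 \geq 2 r_i r_j \geq 2c$; so at most one index has $r_i^2 < c$. No $r_j^2$ equals $c$ (else \eqref{eq:master_identity_plan} would force $c = 0$), and if every $r_j^2 > c$ then every $\mu_j$ would be negative, contradicting $\sum \mu_j = 1$. Hence exactly one $i_0$ satisfies $r_{i_0}^2 < c$, which delivers (d), while the explicit formula $\mu_j = c/(c - r_j^2)$ in the two cases $c \neq 0$ is precisely (e).

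To finish (b) and (c) it remains to match the sign trichotomy of $c$ with the position of $w$: $w \in \relbd S$ forces some $\mu_j = 0$ and hence $c = 0$ by \eqref{eq:master_identity_plan}, proving (c); and $w \in \relint S$ excludes $c = 0$ (which places $w$ at a vertex) and $c > 0$ (which produces a negative $\mu_j$), leaving $c < 0$, proving (b). The main obstacle I anticipate is the case $c > 0$: affine independence of $v_0, \ldots, v_d$ enters only implicitly there, and the crucial bound $r_i^2 + r_j^2 \geq 2c$ has to be extracted cleanly --- via Cauchy--Schwarz rather than through a more cumbersome direct analysis of the Gram matrix of the $u_j$'s --- in order for uniqueness of the index $i_0$ to fall out without further case work.
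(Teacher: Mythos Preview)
Your argument is correct and for parts~(a), (b), (c) and~(e) is essentially identical to the paper's: the paper derives the same scalar identity $(1-a_k)c + a_k\|v_k-w\|^2 = 0$ (your~\eqref{eq:master_identity_plan} in equivalent form) by pairing $\sum_i a_i(v_i - w) = 0$ with each $v_k - w$, and reads off the barycentric coefficients and the sign of $c$ from it just as you do.

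The genuine difference is in part~(d). To show that at most one index has $\|v_i-w\|^2<c$, the paper argues indirectly: it verifies that $w,v_1,\ldots,v_d$ are affinely independent, so the Gram matrix $\bigl(\langle v_i-w,v_j-w\rangle\bigr)_{i,j=1}^d$ is positive definite, and then invokes Lemma~\ref{lemma:condition_for_ex_of_orthocentric_cone} (Sylvester's criterion applied to that specific matrix) to conclude that at most one of the parameters $\lambda_i = (\|v_i-w\|^2-c)^{-1}$ can be negative. Your route via Cauchy--Schwarz, $c=\langle u_i,u_j\rangle\le r_ir_j$ and hence $r_i^2+r_j^2\ge 2c$, is both shorter and more elementary, and it sidesteps the need for the Gram-matrix lemma entirely. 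A second organisational difference is that you first establish the three implications $c<0\Rightarrow w\in\relint S$, $c=0\Rightarrow w$ is a vertex, $c>0\Rightarrow w\notin S$, and then read off (b)--(d) by trichotomy; the paper instead starts each part from the hypothesis on the location of $w$ and works towards the sign of $c$. Your packaging makes the logical equivalence of the three cases more transparent, while the paper's version keeps the proof of~(d) self-contained so that the lemma on positive definiteness --- which is needed later anyway --- gets exercised here.
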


In the setting of~(c), the edges connecting \( w \) to the other vertices of \( S \) are pairwise orthogonal; such simplices are called \emph{rectangular}. The orthocentric simplices described in Parts~(b) and~(d) are referred to as \emph{acute} and \emph{obtuse}, respectively~\cite{Edmonds2005}. Orthocentric simplices that are not rectangular  will be called \emph{oblique}. The number \( c \) is known as the \emph{obtuseness} of the simplex~\cite[Definition~3.2]{Edmonds2005}. Part~(a) is established in~\cite[Theorem 3.1(b)]{Edmonds2005}. The remaining parts are essentially contained in~\cite[Theorems 3.3, 3.4, and 3.8]{Edmonds2005}, but we provide a proof here, as some elements of this proof will be needed later. The proof of~(d) relies on the following lemma.

\begin{lemma}\label{lemma:condition_for_ex_of_orthocentric_cone}
Let $d\in \N$ and $\lambda_0, \lambda_1, \ldots, \lambda_d \in \R \setminus \{0\}$. A necessary and sufficient condition for the positive definiteness of the $d\times d$-matrix
\begin{align*}
G
=
\begin{pmatrix}
\frac{1}{\lambda_1}+\frac{1}{\lambda_0} & \frac{1}{\lambda_0} & \cdots & \frac{1}{\lambda_0} \\
\frac{1}{\lambda_0} & \frac{1}{\lambda_2} + \frac{1}{\lambda_0} & \cdots & \frac{1}{\lambda_0} \\
\vdots  & \vdots  & \ddots & \vdots  \\
\frac{1}{\lambda_0} & \frac{1}{\lambda_0} & \cdots & \frac{1}{\lambda_d} + \frac{1}{\lambda_0}
\end{pmatrix}
\end{align*}
is that one of the following cases occurs.
\begin{enumerate}
\item[(A)] All numbers $\lambda_0, \lambda_1, \ldots, \lambda_d$ are positive.
\item[(B)] One of $\lambda_0, \lambda_1, \ldots,\lambda_d$ is negative, all others are positive, and $\lambda_0 + \lambda_1 + \ldots + \lambda_d < 0$.
\end{enumerate}
\end{lemma}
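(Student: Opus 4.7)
The plan is to exploit the identity
$$
x^T G x = \sum_{i=1}^d \frac{x_i^2}{\lambda_i} + \frac{1}{\lambda_0}\Bigl(\sum_{i=1}^d x_i\Bigr)^2, \qquad x=(x_1,\ldots,x_d)\in\R^d.
$$
First I would introduce the auxiliary variable $y_0:=-(x_1+\ldots+x_d)$ and $y_i:=x_i$ for $i\ge 1$. The right-hand side then equals $Q(y):=\sum_{i=0}^d y_i^2/\lambda_i$, with $y$ ranging over the hyperplane $H:=\{y\in\R^{d+1}:y_0+y_1+\ldots+y_d=0\}$. Since the map $x\mapsto y$ is a linear bijection $\R^d\to H$, the matrix $G$ is positive definite if and only if the restriction $Q|_H$ is.

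The next step is to compute the signature of $Q|_H$ by Sylvester's law of inertia. On $\R^{d+1}$, the diagonal form $Q$ has signature $(p,n)$, where $p$ and $n$ count the positive and negative $\lambda_i$'s among $\lambda_0,\lambda_1,\ldots,\lambda_d$. Setting $v:=(\lambda_0,\lambda_1,\ldots,\lambda_d)\in\R^{d+1}$, I would observe that the polar bilinear form satisfies $Q(v,y)=\sum_{i=0}^d \lambda_i y_i/\lambda_i = \sum_{i=0}^d y_i$, which vanishes on $H$, so $v$ is $Q$-orthogonal to $H$. In addition, $Q(v)=\sum_{i=0}^d \lambda_i$. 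Consequently, provided $\sum_{i=0}^d \lambda_i\ne 0$, one has $v\notin H$ and the decomposition $\R^{d+1}=H\oplus\R v$ is $Q$-orthogonal, giving
$$
\operatorname{sig}(Q|_H) \;=\; (p,n)-\operatorname{sig}(Q|_{\R v}),
$$
where the latter summand is $(1,0)$ if $\sum_{i=0}^d \lambda_i>0$ and $(0,1)$ if $\sum_{i=0}^d \lambda_i<0$. In the boundary case $\sum_{i=0}^d \lambda_i=0$, one has $v\in H$ with $Q(v,y)=0$ for all $y\in H$, so $v$ lies in the radical of $Q|_H$ and the latter is degenerate.

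The matrix $G$ is positive definite exactly when $\operatorname{sig}(Q|_H)=(d,0)$. In the case $\sum\lambda_i>0$ this forces $(p,n)=(d+1,0)$, which is case (A) (the sum inequality being automatic). In the case $\sum\lambda_i<0$ it forces $(p,n)=(d,1)$, i.e.\ exactly one $\lambda_i$ is negative; together with $\sum\lambda_i<0$, this is case (B). The degenerate case $\sum\lambda_i=0$ is excluded. Reversing the reasoning shows that (A) and (B) are each sufficient, completing the characterization.

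I expect the main difficulty to be recognizing the change of variables that extends the quadratic form $x^T G x$ to a diagonal form on a hyperplane of $\R^{d+1}$; this is the decisive conceptual step. Once this reformulation is in place, the remainder amounts to a routine signature count, with the only subtlety being a separate treatment of the degenerate boundary case $\sum_{i=0}^d \lambda_i=0$.
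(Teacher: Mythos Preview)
Your argument is correct and takes a genuinely different route from the paper. The paper applies Sylvester's criterion directly: it computes, via elementary row and column operations, the closed form
\[
\det G_I \;=\; \frac{\lambda_0+\sum_{i\in I}\lambda_i}{\lambda_0\prod_{i\in I}\lambda_i}
\]
for every principal submatrix indexed by $\varnothing\neq I\subseteq\{1,\ldots,d\}$, and then runs through the sign cases to characterize when all of these are positive. Your approach instead rewrites $x^\top G x$ as the diagonal form $Q(y)=\sum_{i=0}^d y_i^2/\lambda_i$ restricted to the hyperplane $\{y_0+\cdots+y_d=0\}$, finds the $Q$-orthogonal complement $\R v$ with $v=(\lambda_0,\ldots,\lambda_d)$, and reads off the signature of $Q|_H$ from that of $Q$ and the sign of $Q(v)=\sum_i\lambda_i$. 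This is cleaner and more conceptual, and it explains transparently why the symmetry in $\lambda_0,\lambda_1,\ldots,\lambda_d$ appears. The paper's computation, on the other hand, has the side benefit that the explicit determinant formula is reused later (in the proof of Lemma~\ref{lemma:proba_gaussian_greater_const}) to identify $\det\Sigma$ and $\Sigma^{-1}$; your method does not produce that formula for free, so if you were writing the paper you would still need a one-line determinant calculation at that later point.
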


\begin{proof}
By Sylvester's criterion, the matrix $G$ is positive definite if and only if all principal minors of $G$ are positive. To compute the principal minors,  let $k \in \{1,\ldots , d\}$ and $I=\{i_1, \ldots, i_k\} \subseteq \{1,\ldots , d\}$ with $i_1 < \ldots < i_k$. Using elementary matrix transformations, the corresponding minor can be computed as
\begin{multline}\label{eq:det_G}
\det
\begin{pmatrix}
\frac{1}{\lambda_{i_1}}+\frac{1}{\lambda_0} & \frac{1}{\lambda_0} & \frac{1}{\lambda_0} & \cdots & \frac{1}{\lambda_0} \\
\frac{1}{\lambda_0} & \frac{1}{\lambda_{i_2}} + \frac{1}{\lambda_0} & \frac{1}{\lambda_0} & \cdots & \frac{1}{\lambda_0} \\
\frac{1}{\lambda_0} & \frac{1}{\lambda_0} & \frac{1}{\lambda_{i_3}} + \frac{1}{\lambda_0} &  \cdots & \frac{1}{\lambda_0} \\
\vdots  & \vdots & \vdots  & \ddots & \vdots  \\
\frac{1}{\lambda_0} & \frac{1}{\lambda_0} & \frac{1}{\lambda_0} & \cdots & \frac{1}{\lambda_{i_k}} + \frac{1}{\lambda_0}
\end{pmatrix}
\\
= \det
\begin{pmatrix}
\frac{1}{\lambda_0}+\frac{1}{\lambda_{i_1}} + \frac{\lambda_{i_2}}{\lambda_{i_1} \lambda_0} + \ldots \frac{\lambda_{i_k}}{\lambda_{i_1} \lambda_0} & 0 & 0 & \cdots & 0 \\
\frac{1}{\lambda_0} & \frac{1}{\lambda_{i_2}} & 0 & \cdots & 0 \\
\frac{1}{\lambda_0} & 0 & \frac{1}{\lambda_{i_3}} &  \cdots & 0 \\
\vdots  & \vdots  & \vdots & \ddots & \vdots \\
\frac{1}{\lambda_0} & 0 & 0 & \cdots & \frac{1}{\lambda_{i_k}}
\end{pmatrix}
=
\frac{\lambda_0 + \sum_{i \in I} \lambda_i}{\lambda_0 \prod_{i \in I} \lambda_i}.
\end{multline}
In the process, the first column is subtracted from all other columns. Secondly, in the resulting matrix for all $\ell \in \{2, \ldots ,k\}$, the $\ell$-th row multiplied with $\frac{\lambda_{i_{\ell}}}{\lambda_{i_1}}$ is added to the first row.

It is left to show that~\eqref{eq:det_G} is positive for all $\varnothing \neq I \subseteq \{1, \ldots, d\}$ if and only if either (A) or (B) occurs. If all $\lambda_i$'s are positive, then also~\eqref{eq:det_G} is always positive. Assume that some of the $\lambda_i$'s are negative.

\vspace*{2mm}
\noindent
\emph{Case 1:} $\lambda_0 < 0$. Assume~\eqref{eq:det_G} is positive for all $I\subseteq \{1,\ldots , d\}$. Taking $I=\{1\}, \ldots ,I=\{d\}$ in \eqref{eq:det_G}, this implies $\lambda_1, \ldots , \lambda_d > 0$. Consequently, taking $I=\{1,\ldots , d\}$ implies that $\lambda_0 + \lambda_1 + \ldots + \lambda_d < 0$, which proves~(B).  Conversely, assuming $\lambda_1, \ldots , \lambda_d >0$ and $\lambda_0 + \lambda_1 + \ldots + \lambda_d < 0$ clearly implies that for all $I\subseteq \{1,\ldots , d\}$, both the numerator and the denominator in~\eqref{eq:det_G} are negative and hence the fraction in~\eqref{eq:det_G} is positive.

\vspace*{2mm}
\noindent
\emph{Case 2:} $\lambda_i<0$ for some $i\in \{1,\ldots , d\}$. Assume~\eqref{eq:det_G} is positive for all $I\subseteq \{1,\ldots , d\}$. Due to symmetry in~\eqref{eq:det_G}, we may assume that $i=1$. Hence, $\lambda_0 > 0$ (otherwise the argument from Case 1 would give a contradiction). Taking $I=\{1\}$ implies $\lambda_0 + \lambda_1 < 0$. Taking $I=\{1,2\}, \ldots , I=\{1,d\}$ entails $\lambda_2 ,\ldots ,\lambda_d > 0$. Finally, taking $I=\{1,\ldots, d\}$ yields $\lambda_0 + \ldots + \lambda_d < 0$. Conversely, assume $\lambda_i < 0$, all other $\lambda_j$ are positive and $\lambda_0 + \ldots + \lambda_d < 0$. Then, in~\eqref{eq:det_G} either both the numerator and the denominator are negative (if $i \in I$) or both are positive (if $i \notin I$). Hence, the fraction in~\eqref{eq:det_G} is positive.
\end{proof}

\begin{proof}[Proof of Proposition~\ref{prop:classification_orthocentr_simplices}]
\emph{Proof of (a),  ``$\Rightarrow$".}  Assume that $w$ is the orthocenter of $S$.
Let $i,j,k \in \{0, \ldots, d\}$ be pairwise distinct. By definition of the orthocenter, $\lan v_i-w, v_j-v_k \ran =0$. Hence,
$$
\lan v_i-w, v_j-w \ran = \lan v_i-w, v_j-v_k + v_k-w \ran = \lan v_i-w, v_k-w \ran.
$$
This proves that $\lan v_i-w, v_j-w \ran = \lan v_{i'}-w, v_{j'}-w\ran$ for all $i\neq j$ and $i'\neq j'$ such that $\{i,j\}\cap \{i',j'\}\neq \varnothing$. To remove the latter condition, consider  $l\in \{0, \ldots, d\}$ such that $l\notin \{i,j,k\}$. The same procedure as above yields
$$
\lan v_i-w, v_k-w \ran = \lan v_l-w, v_k-w \ran.
$$
We conclude that $\lan v_i-w, v_j-w \ran = \lan v_l-w, v_k-w \ran$ and the proof of ``$\Rightarrow$" is complete.

\vspace*{2mm}
\noindent
\emph{Proof of (a),  ``$\Leftarrow$".} Conversely, assume there exists $c\in \R$ such that $\lan v_i-w , v_j-w \ran = c$ for all $0\leq i \neq j \leq d$. Hence, for all pairwise distinct $0\leq i,j,k \leq d$ we have
$$
\lan v_i-w, v_j-v_k \ran
=
\lan v_i-w, v_j-v_k + (w-w) \ran
=
\lan v_i-w, v_j-w \ran - \lan v_i-w, v_k-w \ran
=
c-c
=
0.
$$
Thus, $w$ is the orthocenter of $S$.

For the remaining parts, assume that $S$ is orthocentric with orthocenter $w$ and $c\in \R$ satisfies $c=\lan v_i-w, v_j-w \ran$ for all $0\leq i\neq j \leq d$. We have that $w=\sum_{i=0}^d a_i v_i$ for some $a_0, \ldots, a_d \in \R$ with $a_0+\ldots +a_d=1$, since $w\in \aff S$. Note that
$$
0
=
\lan v_k-w,0 \ran
=
\left\lan v_k-w, \sum_{i=0}^d a_i (v_i-w) \right\ran
=
(a_0 + \ldots + a_{k-1}+a_{k+1} +\ldots + a_d) c + a_k\|v_k-w\|^2
$$
holds for all $k=0,\ldots, d$. Hence,
\begin{equation}\label{eq:pf_classif_orthoc_simpl}
0
=
(1-a_k) c + a_k \|v_k-w\|^2 \qquad \text{ for all } k=0,\ldots, d.
\end{equation}

\vspace*{2mm}
\noindent
\emph{Proof of (b).} From~\eqref{eq:pf_classif_orthoc_simpl} with $k=0$ it follows that
$$
0
=
(a_1 +\ldots + a_d) c + a_0\|v_0-w\|^2.
$$
Since $w\in \relint S$, we have $a_i>0$ and $w\neq v_i$ for all $i=0,\ldots , d$. The above equation hence entails $c<0$.

\vspace*{2mm}
\noindent
\emph{Proof of (c).} Since $w\in \relbd S$, there exists some $i_0 \in \{0, \ldots, d\}$ with $a_{i_0} = 0$. After relabeling, we may assume $i_0=0$. With $k=0$ in~\eqref{eq:pf_classif_orthoc_simpl} we have
$$
0
=
(1-a_0) c + a_0 \|v_0-w\|^2
=
1 \cdot c + 0 \cdot \|v_0-w\|^2
=
c.
$$
Consequently,~\eqref{eq:pf_classif_orthoc_simpl} entails $0 = (1-a_k) c + a_k \|v_k-w\|^2 = a_k \|v_k-w\|^2$ for all $k\in \{1, \ldots, d\}$.
Since $a_1+\ldots + a_d =1$, at least one of $a_1, \ldots, a_d$ is non-zero. Hence, there exists $\ell \in \{1, \ldots, d\}$ with $v_{\ell} = w$. Moreover, this $\ell$ is unique since $v_1, \ldots, v_d$ are pairwise distinct.

\vspace*{2mm}
\noindent
\emph{Proof of (d).}  Let $w\in \aff S \setminus S$. First, we prove that $a_i \neq 0$ for all $i=0,\ldots, d$. Indeed, otherwise, after relabeling, we may assume $a_0=0$. Then, \eqref{eq:pf_classif_orthoc_simpl} implies that $0 = (1-a_0)c + a_0 \|v_0-w\|^2 = c$. Hence, again by~\eqref{eq:pf_classif_orthoc_simpl}, for all $k=1,\ldots, d$, it follows that $a_k \|v_k-w\|^2 = 0$. Since $w\notin S$, we have $v_k \neq w$ for all $k$, and hence  $a_k=0$ for all $k=1, \ldots, d$. We therefore obtain $a_0+a_1+ \ldots +a_d = 0$ which contradicts $a_0 + \ldots + a_d = 1$. Thus, the assumption was wrong, and we have $a_i \neq 0$ for all $i=0,\ldots, d$.

Since $w \in \aff S \setminus S$, there exists $j_0 \in \{0, \ldots, d\}$ with $a_{j_0} < 0$. We may assume $j_0=0$. So, $a_0<0$. By~\eqref{eq:pf_classif_orthoc_simpl} we have $0=(1-a_0)c+a_0\|v_0-w\|^2$, and since $v_i \neq w$, we obtain $c>0$. This also shows that
$$
\|v_0-w\|^2
=
\frac{a_0-1}{a_0} c
=
\frac{|a_0|+1}{|a_0|}c
>
c.
$$

Now, consider $i\in \{1, \ldots, d\}$. With~\eqref{eq:pf_classif_orthoc_simpl} and $a_i \neq 0$ we obtain
$$
0
=
\frac{1-a_i}{a_i} c + \|v_i-w\|^2
\quad \Rightarrow \quad
\| v_i-w \|^2 = \frac{a_i-1}{a_i} c
\qquad \text{ for all } i=1,\ldots, d.
$$
Since $c>0$, this entails $(a_i-1)/a_i>0$. Hence for all $i=1, \ldots, d$ we have either $a_i<0$ or $a_i>1$. Since $a_0 +a_1 +  \ldots + a_d = 1$ and $a_0<0$, there exists an index $i_0 \in \{1, \ldots, d\}$ with $a_{i_0} >1$ and therefore also
$
\|v_{i_0}-w\|^2
=
\frac{a_{i_0}-1}{a_{i_0}} c
<
c.
$
The points $w, v_1, \ldots, v_d$ are affinely independent. Indeed, the points $v_0,v_1, \ldots, v_d$ are affinely independent by assumption, and $w$ has a representation as the affine combination $w=a_0v_0 + \ldots + a_dv_d$, where $a_0 \neq 0$ by assumption. Since $v_0, \ldots, v_d$ are affinely independent, this affine combination is unique, so $w \notin \aff(v_1, \ldots, v_d)$. Thus, $w, v_1, \ldots, v_d$ are affinely independent.

Hence the points $v_1-w, \ldots, v_d-w$ are linearly independent and hence their Gram matrix
$$
\left( \left\lan v_i-w, v_j-w \right\ran \right)_{i,j=1}^d
=
\left( \frac{1}{\lambda_0} + \frac{\delta_{ij}}{\lambda_i} \right)_{i,j=1}^d
\; \text{ with } \; \lambda_0 = \frac{1}{c} \;\text{ and } \; \lambda_i = \frac{1}{\|v_i-w\|^2 -c}, \;  i=1, \ldots, d,
$$
is positive definite. By Lemma~\ref{lemma:condition_for_ex_of_orthocentric_cone}, not more than one of $\lambda_1,\ldots, \lambda_d$ is negative. We have already seen that $\lambda_{i_0}<0$.  Thus, $\|v_i-w\|^2 > c$ for all $i\in \{1, \ldots, d\}\bsl\{i_0\}$. This completes the proof of (d).

\vspace*{2mm}
\noindent
\emph{Proof of (e).} Recall that in the above proof we considered the affine combination $w=a_0v_0 + \ldots + a_dv_d$ for the orthocenter.
For all $i=0, \ldots, d$,~\eqref{eq:pf_classif_orthoc_simpl} says
$$
0
=
(1-a_i)c + a_i\|v_i-w\|^2
=
-a_i\left(c-\|v_i-w\|^2\right)+c,
\quad
a_i
=
\frac{c}{c-\|v_i-w\|^2}.
$$
Here we used that $\|v_i-w\|^2 -c\neq 0$. Indeed, if $w\in \relint S$, this is the case, since $c<0$ by Part~(b). If $w\notin S$, this is true by Part~(d). Recalling that $w=a_0v_0 + \ldots + a_dv_d$ and $a_0+\ldots+a_d = 1$ gives~\eqref{eq:orthocenter_explicit}.
\end{proof}

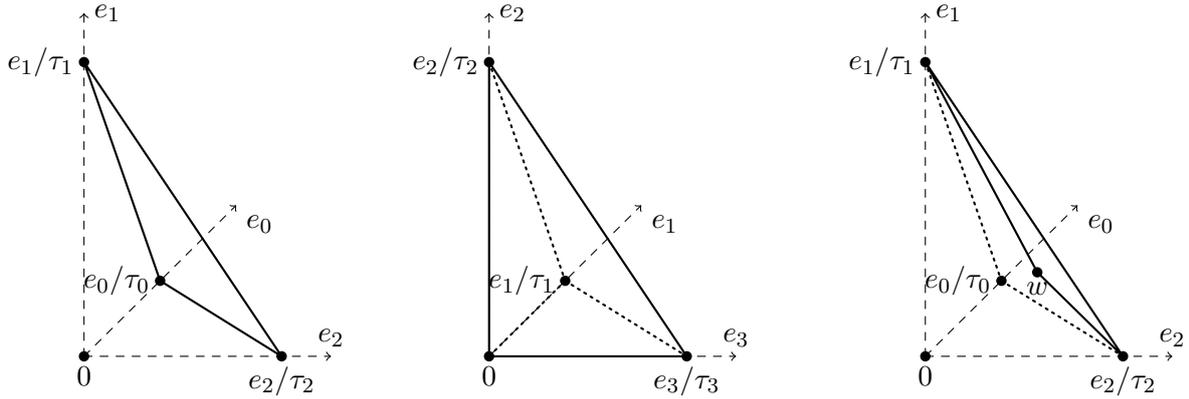
\begin{figure}[h]
    \centering
    \begin{subfigure}{0.3\textwidth}
        \centering
		\begin{tikzpicture}[scale=1.3, line join=round, line cap=round]
            \draw[dashed,->] (0,0,0) -- (2.5,0,0) node[above] {$e_2$};
            \draw[dashed,->] (0,0,0) -- (0,3.5,0) node[right] {$e_1$};
            \draw[dashed,->] (0,0,0) -- (0,0,-4) node[below right] {$e_0$};

            \coordinate (A) at (2,0,0); 
            \coordinate (B) at (0,3,0); 
            \coordinate (C) at (0,0,-2); 

            \draw[thick] (A) -- (C) -- (B) -- (A);

            \fill[black] (A) circle (1.5pt) node[below] {$e_2/\tau_2$};
            \fill[black] (B) circle (1.5pt) node[left] {$e_1/\tau_1$};
            \fill[black] (C) circle (1.5pt) node[left] {$e_0/\tau_0$};
            \fill[black] (0,0,0) circle (1.5pt) node[below] {$0$};
        \end{tikzpicture}
    \end{subfigure}
    \hfill
    \begin{subfigure}{0.3\textwidth}
        \centering
        \begin{tikzpicture}[scale=1.3, line join=round, line cap=round]
            \draw[dashed,->] (0,0,0) -- (2.5,0,0) node[above] {$e_3$};
            \draw[dashed,->] (0,0,0) -- (0,3.5,0) node[right] {$e_2$};
            \draw[dashed,->] (0,0,0) -- (0,0,-4) node[below right] {$e_1$};

            \coordinate (A) at (2,0,0); 
            \coordinate (B) at (0,3,0); 
            \coordinate (C) at (0,0,-2); 
            \coordinate (0) at (0,0,0); 

            \draw[thick] (A) -- (0) -- (B) -- (A);
            \draw[dotted,thick] (C) -- (0);
            \draw[dotted,thick] (B) -- (C);
            \draw[dotted,thick] (C) -- (A);

            \fill[black] (A) circle (1.5pt) node[below] {$e_3/\tau_3$};
            \fill[black] (B) circle (1.5pt) node[left] {$e_2/\tau_2$};
            \fill[black] (C) circle (1.5pt) node[left] {$e_1/\tau_1$};
            \fill[black] (0) circle (1.5pt) node[below] {$0$};
        \end{tikzpicture}
    \end{subfigure}
    \hfill
	\begin{subfigure}{0.35\textwidth}
        \centering
        \begin{tikzpicture}[scale=1.3, line join=round, line cap=round]
            \draw[dashed,->] (0,0,0) -- (2.5,0,0) node[above] {$e_2$};
            \draw[dashed,->] (0,0,0) -- (0,3.5,0) node[right] {$e_1$};
            \draw[dashed,->] (0,0,0) -- (0,0,-4) node[below right] {$e_0$};

            \coordinate (A) at (2,0,0); 
            \coordinate (B) at (0,3,0); 
            \coordinate (C) at (0,0,-2); 
            \coordinate (H) at (0.818,0.542,-0.818); 

            \draw[thick] (H) -- (B) -- (A) -- (H);
            \draw[dotted,thick] (B) -- (C);
            \draw[dotted,thick] (C) -- (A);

            \fill[black] (A) circle (1.5pt) node[below] {$e_2/\tau_2$};
            \fill[black] (B) circle (1.5pt) node[left] {$e_1/\tau_1$};
            \fill[black] (C) circle (1.5pt) node[left] {$e_0/\tau_0$};
            \fill[black] (H) circle (1.5pt) node[below] {$w$};
            \fill[black] (0,0,0) circle (1.5pt) node[below] {$0$};
        \end{tikzpicture}
    \end{subfigure}
    \caption{Canonical examples of orthocentric simplices. Left: acute, Example~\ref{example:e_i/tau_i}. Middle:  rectangular,  Example~\ref{example:degenerate_orthoc_simplex}. Right: obtuse, Example~\ref{example:orthocenter_outside}.}
    \label{figure:three_orthocentric_simplices}
\end{figure}

\subsection{Acute orthocentric simplices}
Next, we present canonical forms for orthocentric simplices and show that, using an isometry, any orthocentric simplex can be transformed into one of these forms. We begin with the acute case. Let $e_0,\ldots, e_d$ be the standard orthonormal basis of $\R^{d+1}$.
\begin{example}\label{example:e_i/tau_i}
Take some positive numbers $\tau_0,\ldots,\tau_d>0$ and consider the $d$-dimensional simplex $S\subseteq \R^{d+1}$ defined by
$$
S
:=
\left[\frac{e_0}{\tau_0},\ldots, \frac{e_d}{\tau_d}\right]
=
\Bigg\{ (x_0, \ldots, x_d) \in \R^{d+1}: x_0,\ldots,x_d\geq 0,\; \sum_{i=0}^d \tau_i x_i = 1\Bigg\};
$$
see the left panel of Figure~\ref{figure:three_orthocentric_simplices}. The affine hull of $S$ is given by
$$
\aff S = \Bigg\{x\in \R^{d+1}:\sum_{i=0}^d \tau_i x_i = 1\Bigg\}.
$$
The simplex \( S \) is orthocentric, and its orthocenter---which is also the projection of the origin onto \( \operatorname{aff} S \)---is the point
$$
w=
\frac{\tau_0e_0+\tau_1e_1 +\ldots +\tau_de_d}{\tau_0^2+\tau_1^2+\ldots + \tau_d^2} =   \sum_{i=0}^{d} \frac{\tau_i^2}{\tau_0^2 + \ldots + \tau_d^2} \cdot \frac{e_i}{\tau_i}.
$$
Indeed, the second formula implies  $w \in \relint S \subseteq \aff S$ and, for all pairwise distinct $i, j, k \in \{0, \ldots, d\}$, we have
$$
\left\lan w-\frac{e_i}{\tau_i}, \frac{e_j}{\tau_j}-\frac{e_k}{\tau_k} \right\ran = 0.
$$
\end{example}

Two simplices  $[v_0,\ldots, v_d]\subseteq\R^{N}$ and $[v_0',\ldots, v_d']\subseteq \R^{N'}$ are called \emph{isometric} if (after relabeling the vertices, if necessary) $\|v_i - v_j\| = \|v_{i}' - v_{j}'\|$ for all $0\leq i,j \leq d$.
The following proposition states that every orthocentric simplex whose orthocenter lies in the relative interior is isometric to a simplex  described in Example~\ref{example:e_i/tau_i}.

\begin{proposition}\label{prop:orthoc_simpl_relint}
Let $v_0, \ldots, v_d \in \R^N$ be affinely independent such that $S=[v_0, \ldots, v_d]$ is orthocentric with orthocenter $w\in \relint S$. As we already know, there exists $c<0$ with $c=\lan v_i-w, v_j-w \ran$ for all $0\leq i \neq j \leq d$. Define
$$
\tau_i
=
\frac{1}{\sqrt{\|v_i-w\|^2 -c}},
\qquad  i=0, \ldots, d.
$$
Then, $S$ is isometric to $[e_0/\tau_0, \ldots, e_d/\tau_d]$.
\end{proposition}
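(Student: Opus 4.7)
The plan is to reduce isometry of simplices to the matching of all pairwise squared distances $\|v_i - v_j\|^2$, and then verify this equality by direct computation using the orthocentric hypothesis.

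First I would note that $\tau_i$ is well-defined: since $w \in \relint S$, Proposition~\ref{prop:classification_orthocentr_simplices}(b) gives $c<0$, so $\|v_i - w\|^2 - c > 0$ for all $i$, and the square root in the definition of $\tau_i$ is a positive real number.

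Next I would expand, for any $0\leq i \neq j \leq d$,
\[
\|v_i - v_j\|^2 = \|(v_i - w) - (v_j - w)\|^2 = \|v_i - w\|^2 + \|v_j - w\|^2 - 2 \langle v_i - w, v_j - w\rangle.
\]
By the orthocentric property $\langle v_i - w, v_j - w\rangle = c$, and by the definition of $\tau_i$ one has $\|v_i - w\|^2 = \tau_i^{-2} + c$. Substituting both identities yields
\[
\|v_i - v_j\|^2 = \tau_i^{-2} + c + \tau_j^{-2} + c - 2c = \frac{1}{\tau_i^2} + \frac{1}{\tau_j^2}.
\]
On the other hand, the canonical simplex $[e_0/\tau_0, \ldots, e_d/\tau_d]$ has pairwise squared distances
\[
\left\|\frac{e_i}{\tau_i} - \frac{e_j}{\tau_j}\right\|^2 = \frac{1}{\tau_i^2} + \frac{1}{\tau_j^2}, \qquad 0\leq i\neq j\leq d,
\]
since $e_0, \ldots, e_d$ are orthonormal. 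Hence all corresponding pairwise distances coincide, which (since both simplices are $d$-dimensional with $d+1$ labelled vertices) gives the desired isometry.

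There is essentially no obstacle here; the statement is a direct consequence of part~(b) of Proposition~\ref{prop:classification_orthocentr_simplices} together with the polarization identity. The only small point to keep in mind is the sign of $c$, which ensures that the normalizing constants $\tau_i$ are real and positive.
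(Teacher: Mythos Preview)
Your proof is correct and follows essentially the same approach as the paper: expand $\|v_i-v_j\|^2$ via $w$, use $\langle v_i-w,v_j-w\rangle=c$ and the definition of $\tau_i$ to obtain $\tau_i^{-2}+\tau_j^{-2}$, and match with the pairwise distances in $[e_0/\tau_0,\ldots,e_d/\tau_d]$. The paper's argument is identical, just slightly more compressed.
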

\begin{proof}
The existence of such a $c<0$ is ensured by Proposition~\ref{prop:classification_orthocentr_simplices} (a), (b). For all $0\leq i \neq j \leq d$, observe that
$$
\left\| v_i-v_j \right\|^2
=
\left\| (v_i-w) - (v_j-w)\right\|^2
=
\|v_i-w\|^2 + \|v_j-w\|^2 -2c
=
\frac{1}{\tau_i^2} + \frac{1}{\tau_j^2}
=
\left\| \frac{e_i}{\tau_i} - \frac{e_j}{\tau_j} \right\|^2.
$$
This implies that the two simplices are isometric.
\end{proof}

\subsection{Rectangular orthocentric simplices}
Next we are going to describe a ``canonical form'' for orthocentric simplices with an orthocenter at one of the vertices.
\begin{example}\label{example:degenerate_orthoc_simplex}
Let $e_1,\ldots, e_d$ be the standard orthonormal basis of $\R^d$. Let $\tau_1, \ldots, \tau_d >0$ and consider the $d$-dimensional simplex
$$
S=[0,e_1/\tau_1, \ldots, e_d/\tau_d]\subseteq \R^d;
$$
see the middle  panel of Figure~\ref{figure:three_orthocentric_simplices}.
It is orthocentric with orthocenter $0$. Indeed, for all pairwise different $1\leq i,j,k \leq d$, we have $\lan \frac{e_i}{\tau_i} - 0, \frac{e_j}{\tau_j} - \frac{e_k}{\tau_k} \ran = 0$.
\end{example}

\begin{proposition}\label{prop:classif_degenerate_orthoc_simplex}
Let $v_0, \ldots, v_d \in \R^N$ be affinely independent such that the simplex $S=[v_0, \ldots, v_d]$ is orthocentric with orthocenter $v_0$. Then, $S$ is isometric to $[0, \|v_1-v_0\| e_1, \ldots, \|v_d-v_0\| e_d]$.
\end{proposition}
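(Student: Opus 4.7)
The plan is to mirror the argument used for Proposition~\ref{prop:orthoc_simpl_relint}, exploiting the fact that the rectangular case corresponds to the degenerate value $c=0$ of the obtuseness. By Proposition~\ref{prop:classification_orthocentr_simplices}~(a), since $S$ is orthocentric with orthocenter $w=v_0$, there exists $c\in\R$ such that $\langle v_i-v_0, v_j-v_0\rangle = c$ for all $0\leq i\neq j\leq d$. Taking $i=0$ (and any $j\neq 0$) forces $c = \langle 0, v_j-v_0\rangle = 0$, which is exactly the content of Proposition~\ref{prop:classification_orthocentr_simplices}~(c). Hence the vectors $v_1-v_0,\ldots,v_d-v_0$ form a pairwise orthogonal (and nonzero, by affine independence) system in $\R^N$.

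With orthogonality in hand, I would define the candidate correspondence $v_0 \mapsto 0$ and $v_i \mapsto \|v_i-v_0\|\,e_i$ for $i=1,\ldots,d$, and verify that it preserves all pairwise distances. The distances from $v_0$ are immediate, since $\|v_i-v_0\| = \bigl\|\|v_i-v_0\|\,e_i - 0\bigr\|$. For the remaining pairs $1\leq i\neq j\leq d$, expand
\begin{equation*}
\|v_i-v_j\|^2 = \|v_i-v_0\|^2 + \|v_j-v_0\|^2 - 2\langle v_i-v_0, v_j-v_0\rangle = \|v_i-v_0\|^2 + \|v_j-v_0\|^2,
\end{equation*}
using $c=0$, and compare with $\bigl\|\|v_i-v_0\|\,e_i - \|v_j-v_0\|\,e_j\bigr\|^2 = \|v_i-v_0\|^2 + \|v_j-v_0\|^2$, which coincides by orthogonality of $e_i$ and $e_j$. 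This shows the two simplices are isometric in the sense introduced just before Proposition~\ref{prop:orthoc_simpl_relint}.

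There is no real obstacle here; the proposition is essentially the statement that the edges from the orthocenter-vertex are mutually perpendicular, which already follows from Proposition~\ref{prop:classification_orthocentr_simplices}~(c) together with Proposition~\ref{prop:classification_orthocentr_simplices}~(a). The only thing to check is the trivial verification of matching edge lengths, carried out as above.
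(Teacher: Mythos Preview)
Your proof is correct and essentially identical to the paper's own argument: both invoke Proposition~\ref{prop:classification_orthocentr_simplices}~(c) to obtain $\langle v_i-v_0, v_j-v_0\rangle=0$ for $1\le i\neq j\le d$, then verify that all pairwise distances match those of $[0,\|v_1-v_0\|e_1,\ldots,\|v_d-v_0\|e_d]$ by expanding $\|v_i-v_j\|^2$.
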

\begin{proof}
By Proposition~\ref{prop:classification_orthocentr_simplices} (c), we have $\lan v_i-v_0, v_j-v_0 \ran = 0$ for all $1\leq i \neq j \leq d$. Hence, for $1\leq i\neq j \leq d$,
$$
\|v_i-v_j\|^2
=
\|(v_i-v_0) - (v_j-v_0) |^2
=
\|v_i-v_0\|^2 + \|v_j-v_0\|^2
=
\left\| \|v_i-v_0\|e_i - \|v_j-v_0\|e_j \right\|^2.
$$
Moreover, for $i=1, \ldots, d$, we have $\|v_i-v_0\|^2 = \| \|v_i-v_0\|e_i - 0\|^2$. Together, this implies that the two simplices are isometric.
\end{proof}

\subsection{Obtuse orthocentric simplices}
Finally, we describe, up to isometry, orthocentric simplices whose orthocenter lies outside the simplex. Their ``canonical form'' is somewhat more tricky than in the previous two examples.

\begin{example}\label{example:orthocenter_outside}
Let $\tau_0, \tau_1, \ldots, \tau_d>0$. Consider the $d$-dimensional simplex
$$
S
=
\left[ \frac{\tau_0e_0+\tau_1e_1 +\ldots +\tau_de_d}{\tau_0^2+\tau_1^2+\ldots + \tau_d^2}, \frac{e_1}{\tau_1}, \ldots, \frac{e_d}{\tau_d} \right]\subseteq \R^{d+1};
$$
see the right panel of Figure~\ref{figure:three_orthocentric_simplices}. This simplex is orthocentric and its orthocenter is $e_0/\tau_0$. This can be proven in several different ways. Let $v_0, \ldots, v_d$ denote the vertices of $S$. One way is to verify $\lan v_i-e_0/\tau_0, v_j-v_k \ran = 0$ for all pairwise different $0 \leq i,j,k \leq d$. Alternatively, one can check that
$$
\left\lan v_i-\frac{e_0}{\tau_0}, v_j - \frac{e_0}{\tau_0} \right\ran
=
\frac{1}{\tau_0^2}
=:
c
$$
for all $0\leq i \neq j \leq d$, which does not depend on the choice of $i\neq j$, so Proposition~\ref{prop:classification_orthocentr_simplices} (a) entails that the simplex is orthocentric with orthocenter $e_0/\tau_0$. Finally, the orthocentric property can be seen as a consequence of Remark~\ref{rem:egervary_orthocentric_collection} combined with Example~\ref{example:e_i/tau_i}.

Note that $e_0/\tau_0 \notin S$ since $c>0$. The next proposition states that every obtuse orthocentric simplex is of the above form, up to isometry.
\end{example}

\begin{proposition}\label{prop:classification_orthocenter_outside}
Let $v_0, \ldots, v_d \in \R^N$ be affinely independent such that $S=[v_0, \ldots, v_d]$ is orthocentric with orthocenter $w\notin S$. By Proposition~\ref{prop:classification_orthocentr_simplices}~(d), there exists $c>0$ with $c=\lan v_i-w, v_j-w \ran$ for all $0\leq i \neq j \leq d$ and, without loss of generality, let  $\|v_{0}-w\|^2<c$ and $\|v_i-w\|^2>c$ for all $i\in \{1, \ldots, d\}$. Define
$$
\tau_i
=
\frac{1}{\sqrt{\|v_i-w\|^2-c}}
\; \text{ for } i\in \{1, \ldots, d\}
\;\text{ and }\;
\tau_{0}
=
\sqrt{\sum_{j=0}^d \frac{1}{c-\|v_j-w\|^2}} = \sqrt{\frac 1c}.
$$
Then, $S$ is isometric to
$$
\left[\frac{\tau_0e_0+\tau_1e_1 +\ldots +\tau_de_d}{\tau_0^2+\tau_1^2+\ldots + \tau_d^2}, \frac{e_1}{\tau_1}, \ldots, \frac{e_d}{\tau_d} \right].
$$
\end{proposition}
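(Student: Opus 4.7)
The plan is to show that the pairwise distances between the vertices of $S$ coincide with those of the canonical simplex under the bijection $v_0 \mapsto \frac{\tau_0 e_0+\ldots+\tau_d e_d}{\tau_0^2+\ldots+\tau_d^2}=:w^*$ and $v_i\mapsto e_i/\tau_i$ for $i=1,\ldots,d$. Since the vertex sets already generate affinely isomorphic simplices of the same dimension, matching all $\binom{d+1}{2}$ squared distances is equivalent to the desired isometry. The orthocenter $w$ will then correspond to the orthocenter $e_0/\tau_0$ of the canonical simplex.

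The first step is to determine the quantities $\|v_i-w\|^2$ and $c$ in terms of the $\tau_j$'s. By the definition of $\tau_i$ for $i=1,\ldots,d$ we immediately have $\|v_i-w\|^2 = 1/\tau_i^2 + c$. To identify $c$, I apply Proposition~\ref{prop:classification_orthocentr_simplices}(e): the coefficients $a_i := c/(c-\|v_i-w\|^2)$ in the affine combination $w=\sum a_i v_i$ sum to $1$, so
$$
\sum_{j=0}^d\frac{1}{c-\|v_j-w\|^2} = \frac1c.
$$
The definition of $\tau_0$ then forces $\tau_0^2 = 1/c$, i.e.\ $c=1/\tau_0^2$. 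Consequently $\|v_i-w\|^2 = 1/\tau_i^2 + 1/\tau_0^2$ for $i\geq 1$, and solving the above identity for the single ``positive'' term gives
$$
c-\|v_0-w\|^2 \;=\; \frac{1}{\tau_0^2+\tau_1^2+\ldots+\tau_d^2},
\qquad\text{hence}\qquad
\|v_0-w\|^2 \;=\; \frac{1}{\tau_0^2}-\frac{1}{T},
$$
where $T:=\tau_0^2+\ldots+\tau_d^2$.

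The second step is the distance comparison. For $1\leq i\neq j\leq d$, expanding $\|v_i-v_j\|^2=\|v_i-w\|^2+\|v_j-w\|^2-2c$ yields $1/\tau_i^2+1/\tau_j^2=\|e_i/\tau_i-e_j/\tau_j\|^2$, exactly as in Proposition~\ref{prop:orthoc_simpl_relint}. For the edges involving $v_0$, the same expansion combined with the formulas above gives
$$
\|v_0-v_i\|^2 \;=\; \|v_0-w\|^2+\|v_i-w\|^2-2c \;=\; \frac{1}{\tau_i^2}-\frac{1}{T},
\qquad i\in\{1,\ldots,d\}.
$$
A short computation in the canonical simplex, using $w^*-e_i/\tau_i=\tfrac{1}{T}\sum_{j\neq i}\tau_j e_j+\tfrac{\tau_i^2-T}{T\tau_i}e_i$, yields
$$
\Bigl\|w^*-\tfrac{e_i}{\tau_i}\Bigr\|^2
\;=\;\frac{T-\tau_i^2}{T^2}+\frac{(T-\tau_i^2)^2}{T^2\tau_i^2}
\;=\;\frac{T-\tau_i^2}{T\tau_i^2}
\;=\;\frac{1}{\tau_i^2}-\frac{1}{T},
$$
which matches. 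Since all pairwise squared distances agree, the two simplices are isometric.

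The only mildly non-routine point is the computation of $\|v_0-w\|^2$, which requires inverting the single ``negative'' summand in the identity $\sum 1/(c-\|v_j-w\|^2)=1/c$; the rest is algebraic verification. The sign hypothesis $\|v_0-w\|^2<c$ ensures that the expression $c-\|v_0-w\|^2=1/T$ is positive, i.e.\ consistent.
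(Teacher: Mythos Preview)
Your proof is correct and follows essentially the same approach as the paper: both use the affine-combination identity from Proposition~\ref{prop:classification_orthocentr_simplices}(e) to pin down $c-\|v_0-w\|^2=1/(\tau_0^2+\ldots+\tau_d^2)$, then match all pairwise squared distances via the expansion $\|v_i-v_j\|^2=\|v_i-w\|^2+\|v_j-w\|^2-2c$. You spell out the canonical-side computation of $\|w^*-e_i/\tau_i\|^2$ a bit more explicitly than the paper does, but the argument is the same.
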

\begin{proof}
The equivalence of both formulas for $\tau_0$ follows from the second identity in~\eqref{eq:orthocenter_explicit}.
For all $1\leq i \neq j \leq d$  observe that
$$
\left\| v_i-v_j \right\|^2
=
\left\| (v_i-w) - (v_j-w)\right\|^2
=
\|v_i-w\|^2 + \|v_j-w\|^2 -2c
=
\frac{1}{\tau_i^2} + \frac{1}{\tau_j^2}
=
\left\| \frac{e_i}{\tau_i} - \frac{e_j}{\tau_j} \right\|^2.
$$
Moreover, note that
$$
\tau_0^2
=
-\frac{1}{\|v_{0}-w\|^2-c}-(\tau_1^2 + \ldots  + \tau_d^2),
\qquad
\|v_{0}-w\|^2-c
=
-\frac{1}{\tau_0^2 + \ldots + \tau_d^2}.
$$
For all $j\in \{1, \ldots, d\}$ it hence follows that
$$
\left\| v_{0}-v_j \right\|^2
=
\|v_{0}-w\|^2 + \|v_j-w\|^2 -2c
=
-\frac{1}{\tau_0^2 + \ldots + \tau_d^2} +\frac{1}{\tau_j^2}
=
\left\| \frac{\tau_0e_0+\tau_1e_1 +\ldots +\tau_de_d}{\tau_0^2+\tau_1^2+\ldots + \tau_d^2} - \frac{e_j}{\tau_j} \right\|^2.
$$
Together, it follows that the two simplices are isometric.
\end{proof}

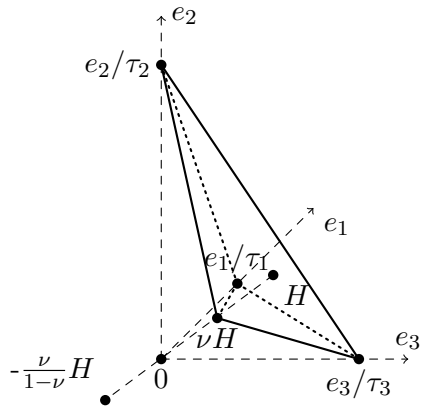
\begin{figure}[h]
		\centering
		\begin{tikzpicture}[scale=1.3, line join=round, line cap=round]
			\draw[dashed,->] (0,0,0) -- (2.5,0,0) node[above] {$e_3$};
			\draw[dashed,->] (0,0,0) -- (0,3.5,0) node[right] {$e_2$};
			\draw[dashed,->] (0,0,0) -- (0,0,-4) node[below right] {$e_1$};
			
			\coordinate (A) at (2,0,0); 
			\coordinate (B) at (0,3,0); 
			\coordinate (C) at (0,0,-2); 
			\coordinate (H) at (0.818,0.542,-0.818); 
			\coordinate (nuH) at (0.409,0.261,-0.409); 
			\coordinate (O) at (-0.409,-0.261,0.409); 
			
			\draw[thick] (A) -- (nuH) -- (B) -- (A);
			\draw[dotted,thick] (C) -- (nuH);
			\draw[dotted,thick] (B) -- (C);
			\draw[dotted,thick] (C) -- (A);
			
			\fill[black] (A) circle (1.5pt) node[below] {$e_3/\tau_3$};
			\fill[black] (B) circle (1.5pt) node[left] {$e_2/\tau_2$};
			\fill[black] (C) circle (1.5pt) node[above] {$e_1/\tau_1$};
			\fill[black] (H) circle (1.5pt) node[below right] {$H$};
			\fill[black] (nuH) circle (1.5pt) node[below] {$\nu H$};
			\fill[black] (O) circle (1.5pt) node[above left] {-$\frac{\nu}{1-\nu} H$};
			\fill[black] (0,0,0) circle (1.5pt) node[below] {$0$};
			
			\draw[black,dashed] (H) -- (O);
		\end{tikzpicture}
	\caption{Simplex from Example~\ref{exam:orthocenter_outside_alternative}.}
	\label{figure:orthocentric_simplices_nu}
\end{figure}

\subsection{All three families together}
All three subclasses of orthocentric simplices—acute, obtuse, and rectangular—can be encompassed within a single parametric family.

\begin{example}\label{exam:orthocenter_outside_alternative}
Let $e_1, \ldots, e_d$ denote the standard orthonormal basis of $\R^d$, and let $\tau_1, \ldots, \tau_d > 0$.
For $\nu \in \R \setminus \{1\}$, consider the simplex
\[
T := \left[\nu H, \frac{e_1}{\tau_1}, \ldots, \frac{e_d}{\tau_d} \right]\subseteq \R^d, \quad \text{where} \quad H := \frac{\tau_1 e_1 + \ldots + \tau_d e_d}{\tau_1^2 + \ldots + \tau_d^2};
\]
see Figure~\ref{figure:orthocentric_simplices_nu}. This simplex is orthocentric, with orthocenter at
$
w = -\frac{\nu}{1 - \nu} H.
$

This can be verified in two ways. Let $v_0, \ldots, v_d$ be the vertices of $T$, with $v_0 = \nu H$. One may check that $\langle v_i - w, v_j - v_k \rangle = 0$ for all distinct $0 \leq i, j, k \leq d$, so $w$ satisfies the defining property of an orthocenter.
Alternatively, one can compute
\[
\langle v_i - w, v_j - w \rangle = \frac{\nu (2 - \nu)}{(1 - \nu)^2} \cdot \frac{1}{\tau_1^2 + \ldots + \tau_d^2} =: c
\]
for all $0 \leq i \neq j \leq d$,  and thus, by Proposition~\ref{prop:classification_orthocentr_simplices}~(a), $T$ is orthocentric with orthocenter $w$.

Since reflection of $T$ across the hyperplane $\aff(e_1/\tau_1, \ldots, e_d/\tau_d)$ corresponds to the transformation $\nu \mapsto 2 - \nu$, it suffices to consider $\nu < 1$.
Then the classification follows from the sign of $c$: for $0 < \nu < 1$, we have $c > 0$ and $T$ is obtuse; for $\nu < 0$, we have $c < 0$ and $T$ is acute; and for $\nu = 0$, we have $c = 0$ and $T$ is rectangular.
In the obtuse case, choosing \(\nu = 1 - \tau_0 / (\tau_0^2 + \ldots + \tau_d^2)^{1/2}\), makes the simplex \(T\)  isometric to the simplex \(S\) from Example~\ref{example:orthocenter_outside}. In the acute case, setting \(\nu = 1 - (\tau_0^2 + \ldots + \tau_d^2)^{1/2} / \tau_0\) yields a simplex \(T\) isometric to the simplex \(S\) from Example~\ref{example:e_i/tau_i}.
\end{example}

\section{Orthocentric cones}
\subsection{Definition of orthocentric cones}\label{subsec:def_orthocentric_cones}
Our goal is to compute internal and external angles of orthocentric simplices. To this end, we need to describe their tangent and normal cones. We start by introducing a class of polyhedral cones which, as we shall show later, appear as tangent and normal cones of orthocentric simplices.

\begin{definition}\label{def:ortho_cone}
Let $v_1,\ldots,v_d$ be linearly independent vectors in $\R^N$ (with $N\geq d$) satisfying
\begin{equation}\label{eq:v_i_scalar_prod}
\langle v_i, v_j\rangle
=
\frac{1}{\lambda_0} + \frac{\delta_{ij}}{\lambda_i}
=
\begin{cases}
\frac 1 {\lambda_0}, &\text{ if } i\neq j,\\
\frac 1 {\lambda_i} + \frac 1 {\lambda_0}, & \text{ if } i=j,
\end{cases}
\qquad
i,j\in \{1,\ldots,d\},
\end{equation}
for some real numbers $\lambda_0,\lambda_1,\ldots,\lambda_d \in  \R\backslash\{0\}$. Moreover, let $\eps_1, \ldots, \eps_d \in \{\pm 1\}$. Then, the positive hull of the vectors $\eps_1 v_1, \ldots, \eps_d v_d$ is referred to as an \emph{orthocentric cone} and denoted by
$$
C_d(\lambda_0; \lambda_1,\ldots,\lambda_d; \eps_1, \ldots, \eps_d) := \pos (\eps_1 v_1,\ldots,\eps_d v_d).
$$
Note that this cone is defined up to isometry only and the concrete choice of $v_1,\ldots, v_d$ and the ambient dimension $N$ is of no relevance as long as relations~\eqref{eq:v_i_scalar_prod} are satisfied. Recall from Lemma~\ref{lemma:condition_for_ex_of_orthocentric_cone} that $C_d(\lambda_0; \lambda_1,\ldots,\lambda_d; \eps_1, \ldots, \eps_d)$ exists in two cases:
\begin{enumerate}
\item[(A)] All numbers $\lambda_0, \lambda_1, \ldots, \lambda_d$ are positive.
\item[(B)] One of $\lambda_0, \lambda_1, \ldots,\lambda_d$ is negative, all others are positive, and $\lambda_0 + \lambda_1 + \ldots + \lambda_d < 0$.
\end{enumerate}
\end{definition}

\subsection{Dual cones of orthocentric cones}
In the next theorem we show that the polar of an orthocentric cone is again orthocentric.

\begin{theorem}
[The polar of $C_d(\lambda_0;\lambda_1,\ldots,\lambda_d; \eps_1, \ldots, \eps_d)$]
\label{theo:orthocentric_cone_dual}
Let $v_1,\ldots,v_d\in \R^d$ be linearly independent vectors satisfying~\eqref{eq:v_i_scalar_prod} for some $\lambda_0,\lambda_1,\ldots,\lambda_d\in \R\setminus\{0\}$ and let $\eps_1, \ldots, \eps_d \in \{ \pm 1\}$. Then, the polar cone of $\pos(\eps_1 v_1,\ldots, \eps_d v_d)$ is isometric to
$$
C_d(-\lambda_0 - \lambda_1 - \ldots - \lambda_d ;\lambda_1,\ldots,\lambda_d; \eps_1 \sgn(\lambda_1), \ldots, \eps_d \sgn(\lambda_d)).
$$
\end{theorem}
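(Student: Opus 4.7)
The plan is to realize the polar cone as the positive hull of the dual basis (with appropriate signs) and then to invert the Gram matrix $G=(\langle v_i,v_j\rangle)_{i,j=1}^d$ explicitly via the Sherman--Morrison formula.

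Since $v_1,\ldots,v_d$ span $\R^d$, let $u_1,\ldots,u_d$ denote the dual basis characterised by $\langle u_i,v_j\rangle=\delta_{ij}$. A direct verification using the change of variables $w=\sum_i c_i u_i$ (so that $\langle w,v_i\rangle=c_i$) shows
$$
\bigl(\pos(\eps_1 v_1,\ldots,\eps_d v_d)\bigr)^\circ
=\pos(-\eps_1 u_1,\ldots,-\eps_d u_d),
$$
so it remains to exhibit, after a suitable rescaling, a set of vectors proportional to $-\eps_i u_i$ whose Gram relation is of the form~\eqref{eq:v_i_scalar_prod} with the parameters claimed in the theorem.

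For this, write $G=D+\lambda_0^{-1}\mathbf{1}\mathbf{1}^{\!\top}$ with $D=\mathrm{diag}(\lambda_1^{-1},\ldots,\lambda_d^{-1})$ and apply Sherman--Morrison (noting that Lemma~\ref{lemma:condition_for_ex_of_orthocentric_cone} ensures $G$ is invertible under (A) or (B)). This yields
$$
(G^{-1})_{ij}
\;=\;\lambda_i\delta_{ij}-\frac{\lambda_i\lambda_j}{\lambda_0+\lambda_1+\ldots+\lambda_d}.
$$
Since the Gram matrix of $u_1,\ldots,u_d$ is $G^{-1}$, setting $w_i:=-u_i/\lambda_i$ I obtain
$$
\langle w_i,w_j\rangle
=\frac{(G^{-1})_{ij}}{\lambda_i\lambda_j}
=\frac{\delta_{ij}}{\lambda_i}+\frac{1}{-(\lambda_0+\lambda_1+\ldots+\lambda_d)},
$$
which is precisely~\eqref{eq:v_i_scalar_prod} for the parameter tuple $(-\lambda_0-\lambda_1-\ldots-\lambda_d;\lambda_1,\ldots,\lambda_d)$.

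To finish, it remains to sort out the signs and verify admissibility. Because $\eps_i\sgn(\lambda_i)\,w_i=-\eps_i u_i/|\lambda_i|$ is a positive scalar multiple of $-\eps_i u_i$, I obtain
$$
\pos(-\eps_1 u_1,\ldots,-\eps_d u_d)
=\pos\bigl(\eps_1\sgn(\lambda_1)\,w_1,\ldots,\eps_d\sgn(\lambda_d)\,w_d\bigr),
$$
matching the sign pattern in the statement. Finally, using $(-\lambda_0-\ldots-\lambda_d)+\lambda_1+\ldots+\lambda_d=-\lambda_0$, a short case check shows that (A) for $(\lambda_0,\ldots,\lambda_d)$ yields (B) for the dual tuple, that (B) with $\lambda_0<0$ yields (A), and that (B) with $\lambda_k<0$ for some $k\geq 1$ yields again (B); hence the dual cone is a well-defined orthocentric cone.

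The only non-routine aspect of the argument is sign bookkeeping: the sign of $\lambda_i$, the choice of $\eps_i$, and the minus sign introduced by polarity all interact through the rescaling $u_i\mapsto w_i=-u_i/\lambda_i$, and they must combine into exactly $\eps_i\sgn(\lambda_i)$. Once this is handled, the theorem reduces essentially to a single application of the Sherman--Morrison formula.
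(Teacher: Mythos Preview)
Your argument is correct and follows essentially the same route as the paper: both identify the polar cone as the positive hull of (negative) dual vectors, compute the inverse Gram matrix explicitly, rescale by $1/\lambda_i$ to recover the orthocentric form, and then sort out the signs. The only cosmetic differences are that you name the Sherman--Morrison formula explicitly (the paper simply states the inverse and says it can be checked by multiplication) and that you add a short admissibility verification for the dual parameter tuple, which the paper omits.
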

\begin{proof}
First, we prove that the polar of the cone $\pos(v_1, \ldots, v_d)$ is isometric to\linebreak $C_d(-\lambda_0 - \lambda_1 - \ldots - \lambda_d ;\lambda_1,\ldots,\lambda_d; \sgn(\lambda_1), \ldots, \sgn(\lambda_d))$.

The $(d-1)$-dimensional faces of $\pos(v_1,\ldots, v_d)$ are of the form  $F_i = \pos(v_1,\ldots, v_{i-1}, v_{i+1}, \ldots, v_d)$, $i=1,\ldots, d$. By conic duality, each such $(d-1)$-dimensional  face of $\pos(v_1,\ldots, v_d)$ corresponds to a one-dimensional face of the polar cone. This one-dimensional face is a ray spanned by some vector $w_i$ which is orthogonal to all $v_j$ with $j \neq i$ and satisfies $\langle w_i, v_i \rangle < 0$. After multiplying $w_i$ by a positive scalar, we may assume that $\langle w_i, v_i \rangle = -1$.
Thus, the polar of the cone $\pos(v_1,\ldots, v_d)$ is the positive hull of the vectors $w_1,\ldots,w_d$ with the property that
$$
\langle v_i, w_j\rangle = - \delta_{ij},
\qquad
i,j\in \{1,\ldots,d\}.
$$
Let $V$ be a non-singular matrix whose columns are $v_1,\ldots,v_d$ and, similarly, let $W$ be a matrix with columns $w_1,\ldots,w_d$. Then, the above relation means that $-W^\top V$ is the $d\times d$ identity matrix. Consequently,
$$
W
=
(-V^{-1})^{\top}.
$$
The Gram matrix of $w_1,\ldots, w_d$, whose entries are $\langle w_i, w_j\rangle$,  is then given by
$$
W^\top W
=
(-V^{-1}) (-V^{-1})^\top = (V^\top V)^{-1},
$$
i.e.\ it is the inverse of the Gram matrix of $v_1,\ldots,v_d$. The inverse of the Gram matrix with entries~\eqref{eq:v_i_scalar_prod} is given by
$$
\langle w_i, w_j \rangle
=
- \frac{\lambda_i \lambda_j}{\lambda_0 + \lambda_1+ \ldots +\lambda_d} + \lambda_i \delta_{ij},
\qquad
i,j\in \{1,\ldots,d\}.
$$
This can be seen by multiplying the matrices.
Consequently, the polar cone of $\pos(v_1,\ldots, v_d)$ is $\pos(w_1, \ldots, w_d) = \pos(\sgn(\lambda_1)\widetilde{w}_1, \ldots, \sgn(\lambda_d)\widetilde{w}_d)$, with the notation $\widetilde{w}_i = w_i/\lambda_i$ for $i=1,\ldots, d$, and with
\begin{equation}\label{eq:w_i_scalar_prod}
\lan \widetilde{w}_i, \widetilde{w}_j \ran
=
- \frac{1}{\lambda_0 + \lambda_1+ \ldots +\lambda_d} +  \frac{\delta_{ij}}{\lambda_i},
\qquad
i,j\in \{1,\ldots,d\}.
\end{equation}
Finally, we have that $\pos(\eps_1 v_1, \ldots, \eps_d v_d)^{\circ} = \pos(\eps_1 \sgn(\lambda_1) \widetilde{w}_1, \ldots, \eps_d \sgn(\lambda_d) \widetilde{w}_d)$, since
$$
\lan \eps_i v_i, \eps_j \sgn(\lambda_j) \widetilde{w}_j\ran
=
\eps_i \eps_j  \sgn(\lambda_j) \frac{1}{\lambda_j} \lan v_i, w_j \ran
\begin{cases}
=0, &\text{ if } i\neq j,\\
<0, & \text{ if } i=j,
\end{cases}
\qquad
i,j\in \{1,\ldots,d\}.
$$
By Definition~\ref{def:ortho_cone}, this cone is isometric to $C_d(-\lambda_0 - \lambda_1-\ldots - \lambda_d ;\lambda_1,\ldots,\lambda_d; \eps_1 \sgn(\lambda_1), \ldots, \eps_d \sgn(\lambda_d))$.
\end{proof}

\subsection{Tangent and normal cones of orthocentric cones and simplices}
The next theorem shows that the tangent cones at faces of an orthocentric cone become orthocentric after factoring out the lineality space.

\begin{theorem}[Tangent and normal cones of orthocentric cones]\label{theo:orthocentr_cones_tangent_normal}
Take linearly independent vectors $v_1,\ldots, v_d\in \R^d$  satisfying~\eqref{eq:v_i_scalar_prod} for some $\lambda_0,\lambda_1,\ldots,\lambda_d\in \R\backslash\{0\}$ and with $\eps_1, \ldots, \eps_d \in \{\pm 1\}$, consider the cone $C:=\pos(\eps_1v_1,\ldots,\eps_dv_d)$. Let $k\in \{1,\ldots,d-1\}$. Then, the normal cone $N(F,C)$ of $C$ at its face $F:= \pos(\eps_1 v_1,\ldots, \eps_k v_k)$ is isometric to
\begin{equation}\label{eq:orthocentr_normal_cone}
C_{d-k}( - \lambda_0 - \lambda_1 - \ldots - \lambda_d ;\lambda_{k+1},\ldots,\lambda_d; \eps_{k+1} \sgn(\lambda_{k+1}), \ldots, \eps_d \sgn(\lambda_d)).
\end{equation}
Furthermore, the tangent cone of the cone $C$ at $F$  can be represented as a direct orthogonal sum $T(F,C) = L \oplus D$ of its lineality space $L = \lin(v_1,\ldots, v_k)$ and a pointed cone $D$  isometric to
\begin{equation}\label{eq:orthocentr_tangent_cone}
C_{d-k}(\lambda_0 + \lambda_{1}+\ldots+\lambda_k;\lambda_{k+1},\ldots,\lambda_d; \eps_{k+1}, \ldots, \eps_d).
\end{equation}
\end{theorem}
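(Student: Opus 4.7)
The plan is to handle the tangent and the normal cone separately, deferring the check that the resulting parameter tuples satisfy (A) or (B) of Lemma~\ref{lemma:condition_for_ex_of_orthocentric_cone} to the very end.

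For the tangent cone, I start from the explicit description of directions. Fix the reference point $x_0 := \eps_1 v_1 + \ldots + \eps_k v_k \in \relint F$ and write a candidate direction as $v = \sum_{i=1}^d \alpha_i \eps_i v_i$. The condition $x_0 + tv \in C$ for some small $t > 0$, combined with the linear independence of $v_1, \ldots, v_d$, is equivalent to $\alpha_i \geq 0$ for every $i > k$ with no constraint on $\alpha_1, \ldots, \alpha_k$. Hence
\[
T(F, C) = \lin(v_1, \ldots, v_k) + \pos(\eps_{k+1} v_{k+1}, \ldots, \eps_d v_d),
\]
from which one reads off $L = \lin(v_1, \ldots, v_k)$ directly as the lineality space. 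To identify the pointed summand $D \subseteq L^\perp$, I would compute the orthogonal projections $u_i$ of $v_i$ (for $i > k$) onto $L^\perp$. Solving $\langle v_i - \sum_{j\leq k} c_j v_j, v_m\rangle = 0$ for $m \leq k$ using~\eqref{eq:v_i_scalar_prod} reduces to a scalar equation for the sum $c_1 + \ldots + c_k$ and yields
\[
u_i = v_i - \frac{1}{\mu_0} \sum_{j=1}^k \lambda_j v_j, \qquad \mu_0 := \lambda_0 + \lambda_1 + \ldots + \lambda_k.
\]
A short bilinear computation, again invoking~\eqref{eq:v_i_scalar_prod}, then gives $\langle u_i, u_j\rangle = 1/\mu_0 + \delta_{ij}/\lambda_i$ for $i, j > k$, which is precisely the defining Gram relation of $C_{d-k}(\mu_0; \lambda_{k+1}, \ldots, \lambda_d; \eps_{k+1}, \ldots, \eps_d)$. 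Thus $D = \pos(\eps_{k+1} u_{k+1}, \ldots, \eps_d u_d)$ is isometric to the cone in~\eqref{eq:orthocentr_tangent_cone}.

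For the normal cone I would use $N(F, C) = C^\circ \cap F^\perp$ together with Theorem~\ref{theo:orthocentric_cone_dual}: its proof exhibits vectors $\widetilde{w}_1, \ldots, \widetilde{w}_d$ with $C^\circ = \pos(\eps_1 \sgn(\lambda_1) \widetilde{w}_1, \ldots, \eps_d \sgn(\lambda_d) \widetilde{w}_d)$, satisfying the biorthogonality $\langle v_j, \widetilde{w}_i\rangle = -\delta_{ij}/\lambda_i$ and the Gram identity~\eqref{eq:w_i_scalar_prod}. Writing $u = \sum_{i=1}^d \beta_i \eps_i \sgn(\lambda_i) \widetilde{w}_i$ with $\beta_i \geq 0$, the requirement $\langle u, v_j\rangle = 0$ for $j = 1, \ldots, k$ collapses via biorthogonality to $\beta_1 = \ldots = \beta_k = 0$. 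Consequently $N(F, C) = \pos(\eps_{k+1} \sgn(\lambda_{k+1}) \widetilde{w}_{k+1}, \ldots, \eps_d \sgn(\lambda_d) \widetilde{w}_d)$, and~\eqref{eq:w_i_scalar_prod} shows that the Gram matrix of $\widetilde{w}_{k+1}, \ldots, \widetilde{w}_d$ has entries $1/(-\lambda_0-\ldots-\lambda_d) + \delta_{ij}/\lambda_i$, identifying this cone as~\eqref{eq:orthocentr_normal_cone}.

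Admissibility of the two new parameter tuples is a short case distinction. If the original data is of type (A), then the new $\mu_0 = \lambda_0+\ldots+\lambda_k$ is positive, giving (A) for~\eqref{eq:orthocentr_tangent_cone}, while the parameter $-\lambda_0-\ldots-\lambda_d$ is negative with $-(\lambda_0+\ldots+\lambda_k)<0$, giving (B) for~\eqref{eq:orthocentr_normal_cone}. If the original data is of type (B), one splits further according to whether the unique negative $\lambda_i$ has index in $\{0,\ldots,k\}$ or $\{k+1,\ldots,d\}$; in each subcase the signs of $\mu_0$ and of the coordinate sums are easily checked to land in (A) or (B). The only genuinely technical step is the projection/Gram calculation that produces the clean identity $\mu_0 = \lambda_0+\lambda_1+\ldots+\lambda_k$ in~\eqref{eq:orthocentr_tangent_cone}; once that is in hand, everything else is bookkeeping on top of Theorem~\ref{theo:orthocentric_cone_dual}, and this is where I expect the main routine obstacle to lie.
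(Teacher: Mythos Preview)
Your argument is correct. For the normal cone you do essentially what the paper does: identify $N(F,C)$ as the span of the last $d-k$ dual generators $\eps_i\sgn(\lambda_i)\widetilde{w}_i$ coming from the proof of Theorem~\ref{theo:orthocentric_cone_dual}, and read off the Gram matrix from~\eqref{eq:w_i_scalar_prod}.

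For the tangent cone you take a genuinely different route. The paper obtains $D$ indirectly: once $N(F,C)$ is known, it simply polarizes inside $\lin N(F,C)$ and applies Theorem~\ref{theo:orthocentric_cone_dual} a second time, which immediately produces the parameter $\lambda_0+\lambda_1+\ldots+\lambda_k$ without any new calculation. You instead compute the orthogonal projections $u_i$ of $v_{k+1},\ldots,v_d$ onto $L^\perp$ explicitly and verify the Gram identity $\langle u_i,u_j\rangle = 1/\mu_0 + \delta_{ij}/\lambda_i$ by hand. Your approach is more self-contained and gives concrete generators for $D$, at the price of the projection computation you flagged; the paper's approach is shorter but leans on the duality theorem twice. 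Your additional admissibility check that the new parameter tuples satisfy (A) or (B) is a nice touch that the paper omits.
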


\begin{proof}
We continue to use the notation from the proof of Theorem~\ref{theo:orthocentric_cone_dual}.
The normal cone at $F$ is isometric to the positive hull of the vectors $\eps_{k+1} \sgn(\lambda_{k+1}) \widetilde{w}_{k+1}$, $\ldots$, $\eps_d \sgn(\lambda_d) \widetilde{w}_d$, where the scalar products of the $\widetilde{w}_i$'s with each other are given by~\eqref{eq:w_i_scalar_prod}. Hence, the normal cone $N(F,C)$ of $C$ at $F$ is isometric to the one given in~\eqref{eq:orthocentr_normal_cone}. The tangent cone $T(F,C)$ is just the polar of $N(F,C)$. Note that $\lin N(F,C)$ is the orthogonal complement of $\lin (v_1,\ldots,v_k)$.  Therefore, $T(F,C)$ is the direct orthogonal sum of the linear space $\lin (v_1,\ldots, v_k)$ and the cone $D$ defined as the polar cone of $N(F,C)$ taken with respect to the ambient space $\lin N(F,C)$. By Theorem~\ref{theo:orthocentric_cone_dual}, the cone $D$ is isometric to
$$
C_{d-k}(\lambda'; \lambda_{k+1}, \ldots, \lambda_d; \eps_{k+1} \sgn(\lambda_{k+1})^2, \ldots, \eps_d \sgn(\lambda_d)^2),
$$
with
$$
\lambda'
=
-(-\lambda_0-\lambda_1-\ldots -\lambda_d)-\lambda_{k+1} -\ldots -\lambda_d
=
\lambda_0+\lambda_1+\ldots +\lambda_k.
$$
This yields~\eqref{eq:orthocentr_tangent_cone}.
\end{proof}

In the next theorem we show that the tangent and normal cones of an oblique orthocentric simplex are orthocentric cones, thus justifying their name. (For tangent cones we again have to factor out the lineality space.) The case of rectangular simplices will be studied somewhat later in Proposition~\ref{prop:degenerate_simpl_cones}.

\begin{theorem}[Tangent and normal cone of oblique orthocentric simplices]
\label{theo:cones_of_orthoc_simpl_nondegenerate_case}
For $d\geq 2$ let $v_0, \ldots, v_d \in \R^d$ be affinely independent and $S=[v_0, \ldots, v_d]$ an orthocentric simplex with orthocenter $w\in \relint S$ or $w \in \aff S \setminus S$. By Proposition~\ref{prop:classification_orthocentr_simplices} there exists $c\neq 0$ with $\lan v_i-w, v_j-w \ran = c$ for all $0\leq i \neq j \leq d$ and the numbers
$$
\mu_i = \frac{1}{\| v_i-w\|^2-c},  \qquad i=0, \ldots, d,
$$
are well-defined. Let $k\in \{0,\ldots, d-1\}$.
\begin{enumerate}
\item The tangent cone of $S$  at the face $F:= [v_0, \ldots, v_k]$ can be represented as a direct orthogonal sum $T(F,S) = L \oplus D$ of the $k$-dimensional lineality space $L$ of $F$ and a pointed cone $D$ isometric to
\begin{multline*}
C_{d-k}\left(-\frac{1}{c}-\mu_{k+1}-\ldots -\mu_d; \mu_{k+1}, \ldots, \mu_d;1,\ldots, 1\right)
\\
=
C_{d-k}(\mu_0 + \ldots + \mu_k; \mu_{k+1}, \ldots, \mu_d;1,\ldots, 1).
\end{multline*}
\item The normal cone $N(F,S)$ of $S$ at $F$ is isometric to
\begin{multline*}
C_{d-k}\left(\frac{1}{c}; \mu_{k+1}, \ldots, \mu_d; \sgn(\mu_{k+1}), \ldots, \sgn(\mu_d)\right)
\\
=
C_{d-k}(-\mu_0-\ldots-\mu_d; \mu_{k+1}, \ldots, \mu_d; \sgn(\mu_{k+1}), \ldots, \sgn(\mu_d)).
\end{multline*}

\end{enumerate}
\end{theorem}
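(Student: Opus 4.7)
My plan is to prove part (2) first, by identifying explicit generators of $N(F,S)$ as the outward normals to the facets of $S$ containing $F$, and then to deduce part (1) by applying the polarity of orthocentric cones (Theorem~\ref{theo:orthocentric_cone_dual}). The key geometric input is that for an orthocentric simplex the altitude from vertex $v_i$ passes through $w$, so that $v_i-w$ is orthogonal to $\aff(v_j:j\neq i)$ within $\lin S$ and therefore spans the one-dimensional space of normals to the facet $F_i:=[v_j:j\neq i]$. This is immediate from Proposition~\ref{prop:classification_orthocentr_simplices}(a): for $j,\ell\neq i$ with $j\neq\ell$, $\langle v_i-w,(v_j-w)-(v_\ell-w)\rangle=c-c=0$.

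For part (2), I would compute, for any $j\neq i$,
$$
\langle v_i-w,\,v_i-v_j\rangle=\|v_i-w\|^2-\langle v_i-w,v_j-w\rangle=\|v_i-w\|^2-c=\tfrac{1}{\mu_i},
$$
which shows that $v_i-w$ is the inward normal at $F_i$ precisely when $\mu_i>0$ and the outward one when $\mu_i<0$. Hence $-\sgn(\mu_i)(v_i-w)$ is a valid outward normal. Since the facets of $S$ containing $F=[v_0,\ldots,v_k]$ are exactly $F_{k+1},\ldots,F_d$, this gives
$$
N(F,S)=\pos\bigl(-\sgn(\mu_{k+1})(v_{k+1}-w),\ldots,-\sgn(\mu_d)(v_d-w)\bigr).
$$
The Gram matrix of the vectors $(v_i-w)_{i>k}$ has off-diagonal entries $c$ and diagonal entries $c+1/\mu_i$, matching~\eqref{eq:v_i_scalar_prod} with $\lambda_0=1/c$ and $\lambda_i=\mu_i$. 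Therefore $N(F,S)$ is isometric to $C_{d-k}(1/c;\mu_{k+1},\ldots,\mu_d;-\sgn(\mu_{k+1}),\ldots,-\sgn(\mu_d))$; the global reflection $x\mapsto -x$ is a linear isometry and simultaneously flips every sign, so this coincides with the cone stated in the theorem. The equivalence of the two forms of the first parameter comes from Proposition~\ref{prop:classification_orthocentr_simplices}(e): writing the coefficients $a_i=c/(c-\|v_i-w\|^2)=-c\mu_i$ and using $\sum_i a_i=1$ gives the identity $\mu_0+\ldots+\mu_d=-1/c$.

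For part (1), I would decompose $T(F,S)=L\oplus D$ where $L:=\lin(v_1-v_0,\ldots,v_k-v_0)$ is the $k$-dimensional lineality space and $D\subseteq L^\perp$ is the pointed part. The standard identity $T(F,S)=L+\pos(v_j-x_0:j>k)$ together with absorption of $L$-components yields the orthogonal direct sum structure, and polar duality gives $D=N(F,S)^\circ$ taken inside $L^\perp$. Applying Theorem~\ref{theo:orthocentric_cone_dual} to the orthocentric representation of $N(F,S)$ obtained above then yields
$$
D\;\simeq\;C_{d-k}\bigl(-1/c-\mu_{k+1}-\ldots-\mu_d;\,\mu_{k+1},\ldots,\mu_d;\,1,\ldots,1\bigr),
$$
because the resulting signs $\epsilon_i\sgn(\lambda_i)=-\sgn(\mu_i)^2=-1$ are again absorbed by $x\mapsto -x$. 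Substituting $-1/c=\mu_0+\ldots+\mu_d$ rewrites the first parameter as $\mu_0+\ldots+\mu_k$.

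The main obstacle is bookkeeping: correctly identifying the sign of each outward normal, invoking the reflection $x\mapsto -x$ at the two places where all signs collectively flip, and verifying that $(1/c;\mu_{k+1},\ldots,\mu_d)$ always satisfies condition (A) or (B) of Lemma~\ref{lemma:condition_for_ex_of_orthocentric_cone} so that the orthocentric cone is well-defined. This splits into the acute case ($c<0$, all $\mu_i>0$, where condition (B) holds since $1/c+\sum_{i>k}\mu_i<0$) and the obtuse case ($c>0$, with exactly one $\mu_{i_0}<0$, yielding condition (A) if $i_0\leq k$ and condition (B) if $i_0>k$); all sub-cases rely on the identity $\sum_{i=0}^d\mu_i=-1/c$.
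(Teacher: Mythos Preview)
Your proposal is correct and follows essentially the same route as the paper's first proof: identify the outward facet normals as $\sgn(\mu_i)(w-v_i)$ via the orthocenter property and the sign computation $\langle v_i-w,v_i-v_j\rangle=1/\mu_i$, read off the Gram matrix to recognize $N(F,S)$ as an orthocentric cone with parameter $\lambda_0=1/c$, and then obtain the pointed part $D$ of $T(F,S)$ by dualizing inside $L^\perp$ via Theorem~\ref{theo:orthocentric_cone_dual}. Your use of the reflection $x\mapsto -x$ is a harmless extra bookkeeping step (the paper avoids it by writing the normals as $\sgn(\mu_i)(w-v_i)$ directly), and your explicit verification of conditions~(A)/(B) in each sub-case is something the paper leaves implicit.
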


We give two proofs of the above theorem. In the first proof, the normal cone is considered first, using outer normal vectors. In the second proof, the representation for tangent cone is proven first. We see that it is the same as the tangent cone of a certain orthocentric cone.

\begin{proof}[First proof of Theorem~\ref{theo:cones_of_orthoc_simpl_nondegenerate_case}]
First, note that both in (a) and (b), the two representations of the cone agree by the second formula in~\eqref{eq:orthocenter_explicit}. It suffices to show that the tangent (resp.\ normal) cone is given by one of them.

The normal cone of $S$ at $F$ is spanned by $n_{k+1}, \ldots, n_d$, where $n_i$ denotes an outer normal vector of $S$ at the facet $H_i = [v_0, \ldots, v_{i-1}, v_{i+1}, \ldots, v_d]$, for all $i=0, \ldots, d$. If $w\in \relint S$, then it is clear from the definition of the orthocenter that we can take $w-v_i$ as an outer normal vector at $H_i$. Since the case when $w$ is outside $S$ is not so clear, let us present an argument valid both for $w\in \relint S$ and $w \in \aff S \setminus S$.
We have
$$
n_i \in \lin(v_j-v_k: j,k \in \{0, \ldots, i-1, i+1, \ldots, d\}, j\neq k)^{\perp},
$$
which is $1$-dimensional. Since $w$ is the orthocenter of $S$, we have $(v_i-w) \perp (v_j-v_k)$ for all $j,k \neq i, j\neq k$. Hence, either $v_i-w$ or $-(v_i-w)$ is an outer normal vector at $H_i$. In order to be \emph{outer} normal vector, $n_i$ has to satisfy $\lan n_i, x-v_i \ran >0$ for all $x\in H_i$. For any $j \neq i$, we have that
$$
\lan w-v_i, v_j-v_i \ran = \lan -(v_i-w), (v_j-w) - (v_i-w) \ran = -c+\|v_i-w\|^2,
$$
which has the same sign as $\mu_i$.
Consequently, an outer normal vector at the facet $H_i$ is $n_i = \sgn(\mu_i)(w-v_i)$. Hence, we obtain
$N(F,S) = \pos(\sgn(\mu_{k+1})(w-v_{k+1}), \ldots, \sgn(\mu_d)(w-v_d) )$. Calculating the scalar products of these vectors yields (b). The tangent cone $T(F,C)$ is the polar cone of $N(F,C)$, hence Theorem~\ref{theo:orthocentric_cone_dual} allows to conclude (a) from this.
\end{proof}

\begin{proof}[Second proof of Theorem~\ref{theo:cones_of_orthoc_simpl_nondegenerate_case}]
First, note that by Proposition~\ref{prop:classification_orthocentr_simplices}, the numbers $\mu_i$ are indeed well-defined.  As a first case, consider $k=0$. Then, $T(F,S)=\pos(v_1-v_0, \ldots, v_d-v_0)$ and for $1\leq i,j \leq d$ we have
$$
\lan v_i - v_0, v_j - v_0 \ran
=
\lan (v_i-w) - (v_0-w), (v_j-w) - (v_0-w) \ran
=
\|v_0-w\|^2 - c + \delta_{ij}(\|v_i-w\|^2 -c)
=
\frac{1}{\mu_0} + \frac{\delta_{ij}}{\mu_i}.
$$
This yields (a) in the case $k=0$, which then implies  (b) by Theorem~\ref{theo:orthocentric_cone_dual}.

Now, let $k\in \{1, \ldots ,d-1\}$. Define
$$
C = v_0 + \pos(v_1-v_0, \ldots, v_d-v_0) \quad \text{ and } \quad G = v_0 + \pos(v_1-v_0, \ldots, v_k-v_0).
$$
Then $G$ is a face of $C$. We claim that $T(F,S) = T(G,C)$. Before proving this, let us show how it implies the theorem. Note that $C-v_0$ is isometric to $C_d(\mu_0; \mu_1, \ldots, \mu_d)$. Hence, (a) and (b) follow by Theorem~\ref{theo:orthocentr_cones_tangent_normal}.

It remains to show that $T(F,S) = T(C,G)$. 
We can write $S$ and $F$ as
\begin{align*}
	S&=v_0 + \left\{ a_1\left( v_1-v_0 \right) +\ldots + a_d\left( v_d-v_0 \right): a_1,\ldots, a_d\geq 0, a_1 + \ldots + a_d\leq 1 \right\},\\
	F&=v_0 + \left\{ a_1\left( v_1-v_0 \right) +\ldots + a_k\left( v_k-v_0 \right): a_1,\ldots, a_k\geq 0, a_1 + \ldots + a_k\leq 1 \right\}.
\end{align*}
In particular, this shows $S \subseteq C$ and $F \subseteq G$.

Since $F$ and $G$ both have a $k$-dimensional affine hull, we have $\relint F \subseteq \relint G$. Hence, we can choose some $x_0\in \relint F$ such that
$$
T(F,S) = \{v\in \R^d : x_0+\eps v \in S \text{ for some } \eps>0\}, \; T(G,C) = \{v\in \R^d : x_0+\eps v \in C \text{ for some } \eps>0\}.
$$
The inclusion $S\subseteq C$ immediately implies $T(F,S)\subseteq T(G,C)$. For the reverse inclusion, note that
$$
x_0 = v_0 + b_1\left( v_1-v_0 \right) +\ldots + b_k\left( v_k-v_0 \right)
$$
for some $b_1,\ldots, b_k >0$ with $b_1 + \ldots + b_k<1$. This holds true, since $x_0$ is an element of $\relint F$, using the characterization of the relative interior of a simplex from \cite[Lemma 1.1.11]{SchneiderBook}.

Let $v\in T(G,C)$. Then, there exists some $\eps>0$ with $x_0+\eps v \in C$. Further, $x_0\in C$, and so for all $0\leq \delta \leq \eps$, by convexity of $C$, we have $x_0+\delta v \in C$.  Hence,
$$
x_0+\delta v = v_0 + a_{1,\delta} \left( v_1-v_0 \right) +\ldots + a_{d,\delta} \left( v_d-v_0 \right)
$$
for some $a_{1,\delta}, \ldots, a_{d, \delta} \geqslant 0$. As $\delta \searrow 0$, we have $x_0 + \delta v \rightarrow x_0$ which implies that $a_{i,\delta} \rightarrow b_i$ for $i\in \{1,\ldots, k\}$ and $a_{i,\delta} \rightarrow 0$ for $i\in \{k+1,\ldots, d\}$. Indeed, take $(v_i-v_0: 1\leq i \leq d)$ as a basis of $(\aff C) -v_0$ and use that convergence in this vector space is equivalent to convergence of all coordinates. Consequently, there exists some $\delta>0$ such that $a_{1,\delta} + \ldots + a_{d,\delta} \leq 1$, which implies $x_0+\delta v \in S$. Thus, $v\in T(F,S)$. This implies $T(G,C)\subseteq T(F,S)$, which concludes the proof.
\end{proof}

\section{Angles of orthocentric cones and oblique orthocentric simplices} \label{sec:angles_orthocentric_cones_and_oblique_simplices}

\subsection{Angles of orthocentric cones}
Our goal is to compute the angles of orthocentric cones, as defined in Definition~\ref{subsec:def_orthocentric_cones}.
This analysis will enable us to compute the angles of orthocentric simplices, excluding the rectangular case, which will be treated separately as a limiting case. The resulting angle expressions will first be given in terms of the following function.

\begin{definition}[The function \( \bg_d \)]\label{def:g_d_function}
Let $d\geq 1$ and $\lambda_0, \lambda_1, \ldots, \lambda_d \neq 0$ be such that one of the following cases occurs
\begin{enumerate}
\item[(A)] All numbers $\lambda_0, \lambda_1, \ldots, \lambda_d$ are positive.
\item[(B)] One of $\lambda_0, \lambda_1, \ldots,\lambda_d$ is negative, all others are positive, and $\lambda_0 + \lambda_1 + \ldots + \lambda_d < 0$.
\end{enumerate}
Moreover, let $\eps_1, \ldots, \eps_d \in \{\pm 1\}$. Then, define
$$
\bg_d(\lambda_0; \lambda_1, \ldots, \lambda_d;\eps_1, \ldots, \eps_d)
:=
\P\left[ \eps_1\eta_1<0, \ldots, \eps_d\eta_d<0 \right],
$$
where $(\eta_1, \ldots, \eta_d)$ is a multivariate Gaussian random vector with mean $0$ and covariance matrix
\begin{equation}\label{eq:covariance_matrix}
\left( \frac{1}{\lambda_0} + \frac{\delta_{ij}}{\lambda_i} \right)_{i,j=1}^d =: \Sigma
\end{equation}
Finally, for $d=0$, we  define $\bg_0(\lambda_0) = 1$ for all $\lambda_0 \in \R$.
\end{definition}

The matrix $\Sigma$ is indeed a valid covariance matrix since it is positive definite by Lemma~\ref{lemma:condition_for_ex_of_orthocentric_cone}. Somewhat later we shall derive explicit formulas for the function $\bg_d$. But first we show that various quantities related to orthocentric cones can be expressed in terms of $\bg_d$.

\begin{theorem}[Angle of $C_d(\lambda_0; \lambda_1,\ldots,\lambda_d; \eps_1, \ldots, \eps_d)$]
\label{theo:angle_pos(eps_i_v_i)_via_g_d(lambda_i,eps_i)}
Let $d\geq 1$, $v_1,\ldots,v_d$ satisfy $\lan v_i, v_j\ran = 1/\lambda_0 + \delta_{ij}/\lambda_i$ for all $i,j\in \{1,\ldots,d\}$, for some real numbers $\lambda_0,\lambda_1,\ldots,\lambda_d \in  \R\setminus \{0\}$ and $\eps_1, \ldots, \eps_d \in \{\pm 1\}$. Then, the solid angle of $\pos(\eps_1 v_1, \ldots, \eps_d v_d)$ is given by
$$
\alpha(\pos(\eps_1 v_1, \ldots, \eps_d v_d))
=
\bg_d(-\lambda_0-\lambda_1-\ldots -\lambda_d; \lambda_1,\ldots, \lambda_d; \eps_1 \sgn(\lambda_1), \ldots, \eps_d \sgn(\lambda_d)).
$$
\end{theorem}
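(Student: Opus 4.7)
The plan is to express the solid angle as a Gaussian orthant probability and then identify the resulting covariance structure with the one in Definition~\ref{def:g_d_function}. Without loss of generality, assume $N=d$, so that $v_1,\ldots,v_d$ form a basis of $\R^d$. Let $V$ be the matrix with columns $v_1,\ldots,v_d$, and let $D$ be the diagonal matrix with entries $\eps_1,\ldots,\eps_d$; then $A:=VD$ has columns $\eps_1v_1,\ldots,\eps_dv_d$, and a point $x\in\R^d$ lies in $C:=\pos(\eps_1v_1,\ldots,\eps_dv_d)$ iff $A^{-1}x=DV^{-1}x$ has nonnegative coordinates. Taking $\xi$ to be a standard Gaussian in $\R^d$, this gives
\begin{equation*}
\alpha(C)
=
\P[\xi\in C]
=
\P\!\left[\eps_i\,(V^{-1}\xi)_i\geq 0\text{ for all }i=1,\ldots,d\right].
\end{equation*}

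Next, set $\xi_i:=(V^{-1}\xi)_i=\lan v_i^*,\xi\ran$, where $v_1^*,\ldots,v_d^*$ is the basis dual to $v_1,\ldots,v_d$ (so that $\lan v_i^*,v_j\ran=\delta_{ij}$). Then $(\xi_1,\ldots,\xi_d)$ is centred Gaussian with covariance matrix equal to the Gram matrix of $v_1^*,\ldots,v_d^*$, that is, the inverse of the Gram matrix of $v_1,\ldots,v_d$. This inverse has already been computed in the proof of Theorem~\ref{theo:orthocentric_cone_dual}: with the notation of that proof we have $v_i^*=-w_i$, so
\begin{equation*}
\E[\xi_i\xi_j]
=
\lan w_i,w_j\ran
=
-\frac{\lambda_i\lambda_j}{\lambda_0+\lambda_1+\ldots+\lambda_d}+\lambda_i\delta_{ij}.
\end{equation*}

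The key step is then to rescale by $\hat\eta_i:=\xi_i/\lambda_i$ (well-defined since $\lambda_i\neq 0$). A direct computation yields
\begin{equation*}
\E[\hat\eta_i\hat\eta_j]
=
-\frac{1}{\lambda_0+\lambda_1+\ldots+\lambda_d}+\frac{\delta_{ij}}{\lambda_i},
\end{equation*}
which matches the form~\eqref{eq:covariance_matrix} with new parameter tuple $(\lambda_0^*;\lambda_1,\ldots,\lambda_d)$, where $\lambda_0^*:=-(\lambda_0+\lambda_1+\ldots+\lambda_d)$. A short case analysis using (A) and (B) verifies that this new tuple again satisfies (A) or (B), so the corresponding $\bg_d$ is well-defined. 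Since $\eps_i\xi_i\geq 0\Leftrightarrow\eps_i\sgn(\lambda_i)\hat\eta_i\geq 0$ (divide by $|\lambda_i|>0$), we obtain
\begin{equation*}
\alpha(C)
=
\P\!\left[\eps_i\sgn(\lambda_i)\hat\eta_i\geq 0\text{ for all }i=1,\ldots,d\right],
\end{equation*}
and Gaussian symmetry $-\hat\eta\eqdistr\hat\eta$ together with continuity of the law converts this into the strict orthant probability appearing in Definition~\ref{def:g_d_function}, producing $\bg_d(-\lambda_0-\ldots-\lambda_d;\lambda_1,\ldots,\lambda_d;\eps_1\sgn(\lambda_1),\ldots,\eps_d\sgn(\lambda_d))$, as claimed. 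The only place where care is required is the bookkeeping of the sign factors $\eps_i$ and $\sgn(\lambda_i)$ through the rescaling step; the heart of the computation, namely the inversion of the Gram matrix, is borrowed wholesale from the proof of Theorem~\ref{theo:orthocentric_cone_dual}, which makes the argument essentially a short calculation.
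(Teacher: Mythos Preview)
Your proof is correct and follows essentially the same approach as the paper's. Both arguments compute the solid angle as a Gaussian orthant probability by passing to the dual description of the cone: you phrase this via the dual basis $v_i^*$ and the coordinate map $V^{-1}$, while the paper phrases it via the polar cone $D=C^\circ$ and its generators $\eps_i\sgn(\lambda_i)\widetilde w_i$; since $v_i^*=-w_i$ and your rescaling $\hat\eta_i=\xi_i/\lambda_i$ corresponds exactly to the paper's passage from $w_i$ to $\widetilde w_i$, the two computations are the same up to notation, and both borrow the Gram-matrix inversion from the proof of Theorem~\ref{theo:orthocentric_cone_dual}.
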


\begin{proof}
The solid angle does not depend on the dimension of the ambient space. Hence, we may assume $v_1,\ldots, v_d\in \R^d$.
Let $D=\pos(\eps_1v_1,\ldots,\eps_dv_d)^{\circ}$. By Theorem~\ref{theo:orthocentric_cone_dual} and its proof, $D$ has the representation $D=\pos(\eps_1 \sgn(\lambda_1) w_1, \ldots, \eps_d \sgn(\lambda_d) w_d)$ for some $w_1, \ldots, w_d \in \R^d$ satisfying
\begin{equation}\label{eq:prop_solid_angle_ortho_cone_wi*wj}
\begin{aligned}
\lan w_i, w_j \ran
=
\begin{cases}
-\frac{1}{\lambda_0 + \lambda_1 + \ldots + \lambda_d} + \frac{1}{\lambda_i}, & \text{if } i = j \\
-\frac{1}{\lambda_0 + \lambda_1 + \ldots + \lambda_d}, & \text{if } i \neq j
\end{cases}
\end{aligned}
\qquad
i, j \in \{ 1 \ldots d \}.
\end{equation}
(What we call $w_i$ was denoted by $\widetilde w_i$ in that proof.) Hence,
$$
\pos(\eps_1 v_1, \ldots, \eps_d v_d)
=
D^{\circ}
=
\{x\in \R^d:\lan x, \eps_1\sgn(\lambda_1)w_1\ran \leq 0, \ldots , \lan x, \eps_d\sgn(\lambda_d)w_d \ran\leq 0\}.
$$
Let $\xi = (\xi_1, \ldots, \xi_d)^\top$ be a standard Gaussian random vector on $\R^d$. Then, by the definition of the solid angle, we have that
\begin{align*}
\alpha \left(\pos(\eps_1v_1,\ldots, \eps_dv_d) \right)
&=
\P[\eps_1\sgn(\lambda_1) \lan \xi,w_1\ran \leq 0, \ldots ,\eps_d\sgn(\lambda_d) \lan \xi,w_d \ran \leq 0]\\
&=
\P[\eps_1\sgn(\lambda_1) \eta_1 \leq 0, \ldots, \eps_d \sgn(\lambda_d)\eta_d \leq 0],
\end{align*}
setting $\eta_i = \lan\xi, w_i\ran$ for $i=1,\ldots, d$. Here, $(\eta_1, \ldots, \eta_d)^T = W^T \xi$, where $W\in \R^{d\times d}$ is the matrix with $w_1,\ldots, w_d$ as its columns. Hence, $(\eta_1, \ldots, \eta_d)$ is zero-mean Gaussian with covariance matrix $W^T(W^T)^T = W^T W$, which is the Gram matrix of $w_1, \ldots, w_d$ given in~\eqref{eq:prop_solid_angle_ortho_cone_wi*wj}. By the definition of $\bg_d$, the desired formula follows.
\end{proof}

\begin{corollary}[Conic intrinsic volumes of orthocentric cones] \label{cor:intrinsic_volumes_orthocentr_simplices}
For $d\geq 1$ let $\lambda_0, \lambda_1, \ldots, \lambda_d \in \R \setminus \{0\}$, $\eps_1, \ldots, \eps_d \in \{\pm 1\}$ be as in Definition~\ref{def:g_d_function}.  Then, for every $k\in \{0,\ldots, d\}$, the $k$-th conic intrinsic volume of  $C_d(\lambda_0; \lambda_1, \ldots, \lambda_d; \eps_1, \ldots, \eps_d)$ is given by
\begin{align*}
\upsilon_k(C_d(\lambda_0;\lambda_1,\ldots,\lambda_d;\eps_1, \ldots, \eps_d))&\\
=
\sum_{1\leq i_1<\ldots<i_k\leq d}\bg_k&(-\lambda_0-\lambda_{i_1}-\ldots-\lambda_{i_k};\lambda_{i_1},\ldots,\lambda_{i_k}; \eps_{i_1}\sgn(\lambda_{i_1}), \ldots, \eps_{i_k} \sgn(\lambda_{i_k})) \times\\
\times\bg_{d-k}&(\lambda_0+\lambda_{i_1}+\ldots+\lambda_{i_k};\lambda_{j_1},\ldots,\lambda_{j_{d-k}}; \eps_{j_1}, \ldots, \eps_{j_{d-k}}).
\end{align*}
Here, $j_1, \ldots, j_{d-k}$ are defined via $\{i_1,\ldots,i_k\} \cup \{j_1, \ldots, j_{d-k}\}=\{1,\ldots, d\}$.\\
Since $\bg_0(x)=1$, in the cases $k=0$ and $k=d$, this means
\begin{align*}
\upsilon_0(C_d(\lambda_0;\lambda_1,\ldots,\lambda_d; \eps_1, \ldots, \eps_d))
&=
\bg_d(\lambda_0;\lambda_1,\ldots,\lambda_d; \eps_1, \ldots, \eps_d)
\qquad \text{ and }\\
\upsilon_d(C_d(\lambda_0;\lambda_1,\ldots,\lambda_d; \eps_1, \ldots, \eps_d))
&=
\bg_d(-\lambda_0-\lambda_1-\ldots-\lambda_d;\lambda_1,\ldots,\lambda_d; \eps_1\sgn(\lambda_1), \ldots, \eps_d \sgn(\lambda_d)).
\end{align*}
\end{corollary}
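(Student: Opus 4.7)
The plan is to unpack the definition~\eqref{eq:conic_intr_vol} of conic intrinsic volumes and apply Theorems~\ref{theo:angle_pos(eps_i_v_i)_via_g_d(lambda_i,eps_i)} and~\ref{theo:orthocentr_cones_tangent_normal} to each summand. Write $C = \pos(\eps_1 v_1, \ldots, \eps_d v_d)$ for linearly independent $v_1, \ldots, v_d$ satisfying~\eqref{eq:v_i_scalar_prod}. Since $C$ is simplicial, its $k$-dimensional faces are exactly
$$F_I = \pos(\eps_{i_1} v_{i_1}, \ldots, \eps_{i_k} v_{i_k}), \qquad I = \{i_1 < \ldots < i_k\} \subseteq \{1, \ldots, d\},$$
so~\eqref{eq:conic_intr_vol} becomes
$$\upsilon_k(C) = \sum_I \alpha(F_I) \, \alpha(N(F_I, C)).$$

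The first factor is immediate: $F_I$ is itself an orthocentric cone, since the Gram matrix of $v_{i_1}, \ldots, v_{i_k}$ is the principal submatrix of the Gram matrix of $v_1, \ldots, v_d$ indexed by $I$, and therefore has the form~\eqref{eq:v_i_scalar_prod} with parameters $(\lambda_0; \lambda_{i_1}, \ldots, \lambda_{i_k})$. Theorem~\ref{theo:angle_pos(eps_i_v_i)_via_g_d(lambda_i,eps_i)} then reproduces exactly the first $\bg_k$-factor appearing in the claim. For the second factor, after the harmless relabelling that sends $I$ to $\{1, \ldots, k\}$, Theorem~\ref{theo:orthocentr_cones_tangent_normal} identifies $N(F_I, C)$ with the orthocentric cone
$$C_{d-k}(-\lambda_0 - \lambda_1 - \ldots - \lambda_d;\, \lambda_{j_1}, \ldots, \lambda_{j_{d-k}};\, \eps_{j_1} \sgn(\lambda_{j_1}), \ldots, \eps_{j_{d-k}} \sgn(\lambda_{j_{d-k}})).$$
Applying Theorem~\ref{theo:angle_pos(eps_i_v_i)_via_g_d(lambda_i,eps_i)} a second time, together with the telescoping identity $-(-\lambda_0 - \lambda_1 - \ldots - \lambda_d) - (\lambda_{j_1} + \ldots + \lambda_{j_{d-k}}) = \lambda_0 + \lambda_{i_1} + \ldots + \lambda_{i_k}$ and the observation that $\sgn(\lambda_{j_s})^2 = 1$, yields the second $\bg_{d-k}$-factor. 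Summing over all $I$ then gives the asserted formula.

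The only point requiring real care is checking that every invocation of $\bg$ uses parameters satisfying condition (A) or (B) of Definition~\ref{def:g_d_function}. For $F_I$ this follows from Lemma~\ref{lemma:condition_for_ex_of_orthocentric_cone}, since any principal submatrix of the positive definite Gram matrix of $v_1, \ldots, v_d$ is itself positive definite; for $N(F_I, C)$ admissibility is already built into Theorem~\ref{theo:orthocentr_cones_tangent_normal}, which presents the normal cone as a genuine orthocentric cone. The extremal cases $k = 0$ (only face is $\{0\}$, with $N(\{0\}, C) = C^\circ$) and $k = d$ (only face is $C$, with trivial normal cone) drop out of the convention $\bg_0(\lambda_0) = 1$ and reproduce the two displayed special-case identities. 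Beyond this bookkeeping I do not anticipate any genuine obstacle; the corollary is essentially a direct combination of the angle and normal-cone formulas for orthocentric cones already in hand.
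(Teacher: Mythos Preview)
Your proposal is correct and follows essentially the same route as the paper: plug the face description into~\eqref{eq:conic_intr_vol}, identify each face and its normal cone as orthocentric cones via Theorem~\ref{theo:orthocentr_cones_tangent_normal}, and read off both solid angles from Theorem~\ref{theo:angle_pos(eps_i_v_i)_via_g_d(lambda_i,eps_i)}. Your explicit admissibility check for the parameters of $\bg$ is a nice addition that the paper leaves implicit.
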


\begin{proof}
Let $0\leq k \leq d$. Recall from~\eqref{eq:conic_intr_vol} that the $k$-th conic intrinsic volume of the cone $C:=C_d(\lambda_0; \lambda_1, \ldots, \lambda_d; \eps_1, \ldots, \eps_d)$ is given by
$$
\upsilon_k(C)=\sum_{F\in \cF_k(C)} \alpha(F)\alpha(N(F,C)).
$$
For $k\in \{1,\ldots,d-1\}$ we can argue as follows. Every face $F\in \cF_k (C)$ is isometric to a cone $C_k(\lambda_0;\lambda_{i_1},\ldots,\lambda_{i_k};\eps_{i_1}, \ldots, \eps_{i_k})$ for certain $1\leq i_1<\ldots <i_k\leq d$. A formula for the solid angle of such a face is given in Theorem~\ref{theo:angle_pos(eps_i_v_i)_via_g_d(lambda_i,eps_i)}:
$$
\alpha(F)=\bg_k(-\lambda_0-\lambda_{i_1}-\ldots-\lambda_{i_k};\lambda_{i_1},\ldots ,\lambda_{i_k}; \eps_{i_1} \sgn(\lambda_{i_1}), \ldots, \eps_{i_k} \sgn(\lambda_{i_k})).
$$
By Theorem~\ref{theo:orthocentr_cones_tangent_normal}, the normal cone of $C$ at such a face is isometric to
$$
C_{d-k}(-\lambda_0-\lambda_1-\ldots-\lambda_d;\lambda_{j_1},\ldots,\lambda_{j_{d-k}}; \eps_{j_1} \sgn(\lambda_{j_1}), \ldots, \eps_{j_{d-k}} \sgn(\lambda_{j_{d-k}})).
$$
Hence, Theorem~\ref{theo:angle_pos(eps_i_v_i)_via_g_d(lambda_i,eps_i)} implies
$$
\alpha(N(F,C))
=
\bg_{d-k}(\lambda_0+\lambda_{i_1}+\ldots+\lambda_{i_k};\lambda_{j_1},\ldots,\lambda_{j_{d-k}}; \eps_{j_1}, \ldots, \eps_{j_{d-k}}).
$$
Together, this implies the formula for $\upsilon_k(C)$.

In the case $k = 0$, note that the only $0$-dimensional face of $C$ is $\{0\}$ and
\begin{align*}
N(\{0\}, C)
&=
C_d(\lambda_0; \lambda_1, \ldots, \lambda_d; \eps_1,\ldots, \eps_d)^{\circ}
\\
&=
C_d(-\lambda_0-\lambda_1 - \ldots - \lambda_d; \lambda_1, \ldots, \lambda_d; \eps_1 \sgn(\lambda_1), \ldots, \eps_d \sgn(\lambda_d))
\end{align*}
by Theorem~\ref{theo:orthocentric_cone_dual}.
Theorem~\ref{theo:angle_pos(eps_i_v_i)_via_g_d(lambda_i,eps_i)} hence entails
$$
\upsilon_0(C)
=
\alpha(\{0\}) \cdot \alpha(N(\{0\}, C))
=
1 \cdot \bg_d(\lambda_0;\lambda_1,\ldots,\lambda_d; \eps_1, \ldots, \eps_d).
$$
In the case $k=d$, note that the only $d$-dimensional face of $C$ is $F=C$ itself and its normal cone is $\{0\}$. Then, with Theorem~\ref{theo:angle_pos(eps_i_v_i)_via_g_d(lambda_i,eps_i)} we find
$$
\upsilon_d(C)
=
\alpha(C) \cdot \alpha(\{0\})
=
\bg_d(-\lambda_0 - \lambda_1 - \ldots - \lambda_d; \lambda_1, \ldots, \lambda_d; \eps_1 \sgn(\lambda_1), \ldots, \eps_d \sgn(\lambda_d)) \cdot 1.
$$
This concludes the proof.
\end{proof}

\subsection{Angles of oblique orthocentric simplices}
In this section, we express the angles of the oblique orthocentric simplices in terms of the function $\bg_d$. We start with the acute case. Recall that by Example~\ref{example:e_i/tau_i} and Proposition~\ref{prop:orthoc_simpl_relint}, acute orthocentric simplices (i.e.\ those containing the orthocenter in their relative interior) are, after applying an isometry, exactly the simplices $\left[e_0/\tau_0, \ldots, e_d/\tau_d\right]$ with $\tau_0, \ldots, \tau_d > 0$. According to Example~\ref{example:e_i/tau_i}, the orthocenter is
$$
w
=
\frac{\tau_0e_0+\tau_1e_1 +\ldots +\tau_de_d}{\tau_0^2+\tau_1^2+\ldots + \tau_d^2}.
$$
The angles of such simplices are given in the following theorem.
\begin{theorem}[Angles of acute orthocentric simplices]\label{theo:orthocentr_simpl_relint_tangent_normal_angles}
Let $d\geq 2$ and fix some $\tau_0,\ldots,\tau_d>0$. Consider the simplex $S:= [e_0/\tau_0,\ldots, e_d/\tau_d] \subseteq \R^{d+1}$ and let $k\in \{0,\dots, d\}$. For the $k$-dimensional face $F:= [e_0/\tau_0,\ldots,e_k/\tau_k]$, the following assertions are satisfied.
\begin{enumerate}
\item For $k\in \{0,\ldots,  d-1\}$, the tangent cone of $S$ at $F$ can be represented as a direct orthogonal sum $T(F,S) = L \oplus D$ of the $k$-dimensional lineality space $L$ of $F$
and a pointed cone $D$ isometric to
$$
C_{d-k}(\tau_0^2 + \ldots + \tau_k^2 ; \tau_{k+1}^2,\ldots, \tau_d^2; 1, \ldots, 1).
$$
For $k=d$, the tangent cone $T(S,F) = T(S,S)$ of $S$ at $S$ is the lineality space of $S$.
\item For $k\in \{0,\ldots,  d-1\}$, the normal cone of $S$ at $F$ is an orthogonal direct sum of a straight line and a cone isometric to
$$
C_{d-k}( - \tau_0^2 - \ldots  - \tau_d^2 ; \tau_{k+1}^2,\ldots, \tau_d^2; 1, \ldots, 1).
$$
If $k=d$, the normal cone $N(F,S) = N(S,S)$ is a straight line.
\item The internal angle of $S$ at $F$ is
$$
\beta(F,S)
=
\bg_{d-k}(-\tau_0^2-\ldots-\tau_d^2;\tau_{k+1}^2,\dots,\tau_d^2; 1, \ldots, 1).
$$
\item The external angle of $S$ at $F$ is
$$
\gamma(F,S)
=
\bg_{d-k}(\tau_0^2+\ldots+\tau_k^2;\tau_{k+1}^2,\dots,\tau_d^2; 1,\ldots ,1).
$$
\end{enumerate}
\end{theorem}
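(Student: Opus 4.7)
The plan is a parameter-tracking argument: I will combine Example~\ref{example:e_i/tau_i} (which identifies the simplex as acute orthocentric), Theorem~\ref{theo:cones_of_orthoc_simpl_nondegenerate_case} (which describes the tangent and normal cones as orthocentric cones), and Theorem~\ref{theo:angle_pos(eps_i_v_i)_via_g_d(lambda_i,eps_i)} (which computes the solid angle of an orthocentric cone via $\bg_d$).

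First I would extract the parameters of the orthocentric cones appearing in Theorem~\ref{theo:cones_of_orthoc_simpl_nondegenerate_case}. Writing $v_i = e_i/\tau_i$ and $T := \tau_0^2 + \ldots + \tau_d^2$, Example~\ref{example:e_i/tau_i} gives the orthocenter $w = (\tau_0 e_0 + \ldots + \tau_d e_d)/T \in \relint S$, so Proposition~\ref{prop:classification_orthocentr_simplices}(b) applies. A short computation using $\|w\|^2 = 1/T$ and $\langle v_i, w\rangle = 1/T$ yields
$$
c := \langle v_i - w, v_j - w\rangle = -\tfrac{1}{T} \; (i \neq j), \qquad \|v_i - w\|^2 = \tfrac{1}{\tau_i^2} - \tfrac{1}{T}, \qquad \mu_i := \frac{1}{\|v_i - w\|^2 - c} = \tau_i^2.
$$
In particular, all $\mu_i$ are positive, so the sign factors $\sgn(\mu_i)$ in Theorem~\ref{theo:cones_of_orthoc_simpl_nondegenerate_case} collapse to $1$, and $-\mu_0 - \ldots - \mu_d = 1/c = -T$.

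Next, for $k \in \{0, \ldots, d-1\}$, I would substitute into Theorem~\ref{theo:cones_of_orthoc_simpl_nondegenerate_case} and read off parts (1) and (2). One minor caveat: that theorem is stated for simplices spanning the ambient space, whereas here $S$ has codimension one in $\R^{d+1}$. The tangent cone lives inside $\aff S - x_0$ and is unaffected; the normal cone, being polar in the full ambient $\R^{d+1}$, acquires an additional one-dimensional summand $(\aff S)^\perp$, which accounts for the ``orthogonal direct sum of a straight line and a cone'' in (2). The cases $k = d$ are immediate: $T(S,S)$ is the $d$-dimensional linear space $\aff S - x_0$, and $N(S,S) = (\aff S)^\perp$ is a straight line.

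Finally, for parts (3) and (4), I would use that solid angles are invariant under orthogonal direct sum with a linear subspace, reducing $\beta(F,S)$ and $\gamma(F,S)$ to the solid angles of the pointed orthocentric cones identified above. Applying Theorem~\ref{theo:angle_pos(eps_i_v_i)_via_g_d(lambda_i,eps_i)} with $\lambda_j = \tau_j^2$ and $\eps_j = 1$, the argument $-\lambda_0 - \sum_j \lambda_j$ simplifies to $-(\tau_0^2 + \ldots + \tau_d^2)$ for (3) and to $\tau_0^2 + \ldots + \tau_k^2$ for (4), giving the claimed formulas; the $k = d$ cases follow from the convention $\bg_0 \equiv 1$. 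I foresee no genuine obstacle beyond careful bookkeeping of parameters and the harmless codimension-one adjustment to the normal cone.
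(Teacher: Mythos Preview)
Your proposal is correct and follows essentially the same route as the paper: compute $c=-1/T$ and $\mu_i=\tau_i^2$, invoke Theorem~\ref{theo:cones_of_orthoc_simpl_nondegenerate_case} for the cone descriptions, and then Theorem~\ref{theo:angle_pos(eps_i_v_i)_via_g_d(lambda_i,eps_i)} for the angles. You are in fact slightly more careful than the paper in explicitly flagging the codimension-one adjustment (the extra line in the normal cone), which the paper's proof leaves implicit.
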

\begin{remark}
An explicit representation for the function $\bg_d$ at arguments as on the right-hand side of~(c) and~(d) of Theorem~\ref{theo:orthocentr_simpl_relint_tangent_normal_angles} will be given in Theorem~\ref{theo:formula_for_bg_d(lambda_i,eps_i)}.
\end{remark}

\begin{proof}[Proof of Theorem~\ref{theo:orthocentr_simpl_relint_tangent_normal_angles}]
\underline{Case $k=d$:} The assertion (a) follows from the definition of the tangent cone, then (b) follows from (a) with $T(F,S)=N(F,S)^{\circ}$. The angles of these cones are both equal to $1$. Hence, the assertions (c) and (d) follow by the definition of $\bg_0$.

\vspace*{2mm}
\noindent
\underline{Case $k\in \{0, \ldots, d-1\}$:} \emph{Proof of (a) and (b).}
For all $0 \leq i \neq j \leq d$, we have
$$
\left\lan \frac{e_i}{\tau_i} -w, \frac{e_j}{\tau_j} -w \right\ran
=
-\frac{1}{\tau_0^2 + \ldots + \tau_d^2}
=:c
\; \text{ and } \;
\mu_i := \frac{1}{\left\| \frac{e_i}{\tau_i}-w \right\|^2 -c}
=
\frac{1}{\left\lan \frac{e_i}{\tau_i}-w, \frac{e_i}{\tau_i}-\frac{e_j}{\tau_j} \right\ran}
=
\tau_i^2.
$$
The assertions (a) and (b) now follow from Theorem~\ref{theo:cones_of_orthoc_simpl_nondegenerate_case}.

\vspace*{2mm}
\noindent
\emph{Proof of (c) and (d).} Note that $\beta(F,S) = \alpha(T(F,S))$ and $\gamma(F,S)=\alpha(N(F,S))$.
The assertions follow from (a), (b) and Theorem~\ref{theo:angle_pos(eps_i_v_i)_via_g_d(lambda_i,eps_i)}.
\end{proof}

Now we turn to the obtuse simplices.  Recall that, by Example~\ref{example:orthocenter_outside} and Proposition~\ref{prop:classification_orthocenter_outside}, obtuse orthocentric simplices (i.e.\ those not containing their  orthocenter) are, after applying an isometry, exactly the simplices of the form
\begin{equation}\label{eq:orthocentric_simpl_obtuse_canonical}
S:= \left[w, \frac{e_1}{\tau_1}, \ldots, \frac{e_d}{\tau_d} \right],
\qquad
w= \frac{\tau_0e_0+\tau_1e_1 +\ldots +\tau_de_d}{\tau_0^2+\tau_1^2+\ldots + \tau_d^2},
\end{equation}
where $\tau_0, \ldots, \tau_d>0$.
The orthocenter is then at $\frac{e_0}{\tau_0}$. Note that these simplices have two types of faces: those containing the vertex $w$ and those not containing $w$.

\begin{theorem}[Angles of obtuse orthocentric simplices]\label{theo:cones_angles_orthoc_simplices_outside}
	Let $d\geq 2$, $\tau_0, \ldots, \tau_d >0$ and consider the simplex $S$ defined in~\eqref{eq:orthocentric_simpl_obtuse_canonical}.
	\begin{enumerate}
		\item $T(S,S) = \R^d$ and $N(S,S)=\{0\}$.
		\item For $k\in \{0,\ldots, d-1\}$, the tangent cone $T(F,S)=L \oplus C$ of $F=[\frac{e_1}{\tau_1}, \ldots, \frac{e_{k+1}}{\tau_{k+1}}]$ is the direct orthogonal sum of the lineality space $L$ of $F$ and a cone $C$ isometric to
		$$
		C_{d-k}\left( \tau_1^2 + \ldots + \tau_{k+1}^2; -\tau_0^2 - \tau_1^2 - \ldots - \tau_d^2 , \tau_{k+2}^2, \ldots, \tau_d^2; 1, \ldots, 1 \right).
		$$
		The normal cone of $S$ at $F$ is
		$$
		N(F,S)
		=
		C_{d-k}\left( \tau_0^2; -\tau_0^2-\tau_1^2 - \ldots -\tau_d^2, \tau_{k+2}^2, \ldots, \tau_d^2;-1, 1, \ldots, 1 \right).
		$$
		\item For $k\in \{0,\ldots, d-1\}$, the tangent cone $T(F,S)=L \oplus C$ of $F=[w,\frac{e_1}{\tau_1}, \ldots, \frac{e_k}{\tau_k}]$ is the direct orthogonal sum of the lineality space $L$ of $F$ and a cone $C$ isometric to
		$$
		C_{d-k}\left(-\tau_0^2-\tau_{k+1}^2-\ldots -\tau_d^2; \tau_{k+1}^2,\ldots, \tau_d^2; 1,\ldots, 1 \right).
		$$
		The normal cone of $S$ at $F$ is
		$$
		N(F,S)
		=
		C_{d-k}\left( \tau_0^2; \tau_{k+1}^2, \ldots, \tau_d^2; 1, \ldots, 1 \right).
		$$
		\item For $k \in \{0, \ldots, d-1\}$, the internal and external angle of $S$ at $F=[\frac{e_1}{\tau_1}, \ldots, \frac{e_{k+1}}{\tau_{k+1}}]$ are
		\begin{align*}
			\beta(F,S)
			&=
		\bg_{d-k}\left( \tau_0^2; -\tau_0^2 -\ldots - \tau_d^2, \tau_{k+2}^2, \ldots, \tau_d^2; -1,1, \ldots, 1 \right),\\
			\gamma(F,S)
			&=
			\bg_{d-k}\left( \tau_1^2 +\ldots +\tau_{k+1}^2; -\tau_0^2 -\ldots - \tau_d^2, \tau_{k+2}^2, \ldots, \tau_d^2; 1, \ldots, 1 \right).
		\end{align*}
		For $k\in \{0,1,\ldots, d\}$, the internal and external angle of $S$ at $F=[w, \frac{e_1}{\tau_1}, \ldots, \frac{e_k}{\tau_k}]$ are
		\begin{align*}
			\beta(F,S)
			&=
			\bg_{d-k}\left(\tau_0^2; \tau_{k+1}^2, \ldots, \tau_d^2; 1, \ldots, 1 \right),\\
			\gamma(F,S)
			&=
			\bg_{d-k}\left( -\tau_0^2 -\tau_{k+1}^2-\ldots -\tau_d^2; \tau_{k+1}^2, \ldots, \tau_d^2; 1, \ldots, 1 \right).
		\end{align*}
		
	\end{enumerate}
\end{theorem}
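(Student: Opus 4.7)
The plan is to derive this theorem by combining Theorem~\ref{theo:cones_of_orthoc_simpl_nondegenerate_case}, which describes tangent and normal cones of any oblique orthocentric simplex in terms of the orthocenter data $c$ and $\mu_i$, with Theorem~\ref{theo:angle_pos(eps_i_v_i)_via_g_d(lambda_i,eps_i)}, which expresses the solid angle of an arbitrary orthocentric cone via $\bg_d$. The first step is to compute these orthocentric parameters for the simplex~\eqref{eq:orthocentric_simpl_obtuse_canonical}. By Example~\ref{example:orthocenter_outside}, the orthocenter is $e_0/\tau_0\notin S$ and the obtuseness is $c=1/\tau_0^2>0$. A direct calculation gives $\|e_i/\tau_i - e_0/\tau_0\|^2 = 1/\tau_i^2 + 1/\tau_0^2$ for $i\in\{1,\ldots,d\}$, whence $\mu_i = \tau_i^2$, and for the remaining vertex one finds $\|w - e_0/\tau_0\|^2 - c = -1/(\tau_0^2+\ldots+\tau_d^2)$, so that $\mu_w = -(\tau_0^2+\ldots+\tau_d^2)$ is the unique negative value among the $\mu$'s, in agreement with Proposition~\ref{prop:classification_orthocentr_simplices}(d).

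Part~(a) is immediate from the definitions of the tangent and normal cones at the whole polyhedron. Part~(c) then follows by straight substitution into Theorem~\ref{theo:cones_of_orthoc_simpl_nondegenerate_case} applied to the face $[v_0,\ldots,v_k]$ with $v_0 = w$: the tangent-cone parameter $-1/c - \mu_{k+1} - \ldots - \mu_d$ becomes $-\tau_0^2 - \tau_{k+1}^2 - \ldots - \tau_d^2$, the normal-cone parameter $1/c$ becomes $\tau_0^2$, and all $\mu_j$'s with $j\geq 1$ are positive, so no $\sgn(\mu_j)$ switches occur. For part~(b) I would relabel the vertices of $S$ so that the face $F = [e_1/\tau_1,\ldots,e_{k+1}/\tau_{k+1}]$ plays the role of $[v_0,\ldots,v_k]$ and then set $v_{k+1}=w$; in this relabeling only $\tilde\mu_{k+1}= -(\tau_0^2+\ldots+\tau_d^2)$ is negative, which produces the sign $\sgn(\tilde\mu_{k+1})=-1$ appearing in the normal cone formula. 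The telescoping identity $\tilde\mu_0+\ldots+\tilde\mu_k = \tau_1^2+\ldots+\tau_{k+1}^2$ simplifies the tangent-cone parameter $-1/c - \tilde\mu_{k+1} - \ldots - \tilde\mu_d$ to $\tau_1^2+\ldots+\tau_{k+1}^2$, yielding the stated formulas.

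For part~(d) the internal and external angles follow by applying Theorem~\ref{theo:angle_pos(eps_i_v_i)_via_g_d(lambda_i,eps_i)} to the pointed factors of the cones obtained in (b) and (c); the lineality summand does not affect the solid angle. The four resulting $\bg_{d-k}$-expressions reduce to the claimed forms after a further round of telescoping and after noting that, when a parameter $\lambda_j$ equals $-(\tau_0^2+\ldots+\tau_d^2)$, the sign factor $\sgn(\lambda_j) = -1$ flips the sign $\eps_j = -1$ inherited from the normal cone, thus producing the $+1$ in the external-angle formula. A subsidiary but essential check before invoking Definition~\ref{def:g_d_function} is that each parameter tuple satisfies case~(B): the unique negative entry is always $-(\tau_0^2+\ldots+\tau_d^2)$, and the total sum of the $\lambda_i$'s evaluates to either $-\tau_0^2$ or $-(\tau_1^2+\ldots+\tau_{k+1}^2)$, both strictly negative. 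The main obstacle I anticipate is purely notational: one must carefully track the relabeling of vertices in part~(b) together with the interplay between the signs $\sgn(\mu_i)$ from Theorem~\ref{theo:cones_of_orthoc_simpl_nondegenerate_case} and $\sgn(\lambda_j)$ from Theorem~\ref{theo:angle_pos(eps_i_v_i)_via_g_d(lambda_i,eps_i)}; once the dictionary $(c,\mu_i)\leftrightarrow(\lambda_0,\lambda_i,\eps_i)$ is written down explicitly, every claim reduces to substitution.
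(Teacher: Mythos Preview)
Your proposal is correct and follows essentially the same route as the paper: compute the orthocenter data $c=1/\tau_0^2$, $\mu_0=-(\tau_0^2+\ldots+\tau_d^2)$, $\mu_i=\tau_i^2$ for $i\geq 1$, then feed these into Theorem~\ref{theo:cones_of_orthoc_simpl_nondegenerate_case} for the cone descriptions and into Theorem~\ref{theo:angle_pos(eps_i_v_i)_via_g_d(lambda_i,eps_i)} for the angles. The paper is terser about the relabeling needed for part~(b), but your explicit bookkeeping of the signs $\sgn(\mu_j)$ and $\sgn(\lambda_j)$ is exactly what is implicitly used there.
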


\begin{remark}
	Explicit representations for $\bg_d$ at arguments as in the right-hand sides of Theorem~\ref{theo:cones_angles_orthoc_simplices_outside} (d) are given in Theorem~\ref{theo:formula_for_bg_d(lambda_i,eps_i)} and Theorem~\ref{theo:formula_for_bg_d(lambda_i,eps_i)_negative}.
\end{remark}

\begin{proof}[Proof of Theorem~\ref{theo:cones_angles_orthoc_simplices_outside}]
(a) follows by the definition of the tangent cone and $N(S,S)=T(S,S)^{\circ}$.

\vspace*{2mm}
\noindent
(b) and (c) follow by applying Theorem~\ref{theo:cones_of_orthoc_simpl_nondegenerate_case}. To see this, put  $v_0=w$ and  $v_i=\frac{e_i}{\tau_i}$ for $i=1,\ldots, d$. Then, $S=[v_0, v_1, \ldots, v_d]$. By Example~\ref{example:orthocenter_outside}, $\frac{e_0}{\tau_0}$ is the orthocenter of $S$ and
	$
	\lan v_i-\frac{e_0}{\tau_0}, v_j-\frac{e_0}{\tau_0} \ran
	=
	1/\tau_0^2
	=:
	c
	$
	for all $0 \leq i \neq j \leq d$. Let  $\mu_i = 1/(\|v_i-\frac{e_0}{\tau_0}\|^2-c)$ for $i\in \{0, \ldots, d\}$. It is a computation to see that
	$$
	\mu_0 = -\tau_0^2-\tau_1^2 - \ldots - \tau_d^2 <0
	\qquad \text{ and } \qquad
	\mu_i=\tau_i^2 \text{ for } i \in \{1, \ldots, d\}.
	$$
	Inserting these values into  Theorem~\ref{theo:cones_of_orthoc_simpl_nondegenerate_case} yields (b) and (c).

\vspace*{2mm}
\noindent
For (d) and (e), note that $\beta(F,S)=\alpha(T(F,S))$ and $\gamma(F,S)=\alpha(N(F,S))$.

\vspace*{2mm}
\noindent
(d) follows from (b) and Theorem~\ref{theo:angle_pos(eps_i_v_i)_via_g_d(lambda_i,eps_i)}.

\vspace*{2mm}
\noindent
It remains to prove (e). For $F=S$ (i.e.\ $k=d$), this follows from (a) and the definition of $\bg_0$. For $k\leq d-1$, the assertions follow from (c) and Theorem~\ref{theo:angle_pos(eps_i_v_i)_via_g_d(lambda_i,eps_i)}.
\end{proof}

\subsection{Examples for orthocentric cones}
In the following examples, we present concrete realizations of orthocentric cones, covering every admissible combination of parameters \( \lambda_0, \ldots, \lambda_d \).  Recall that  $e_0,\ldots, e_d$ is the standard orthonormal basis in $\R^{d+1}$.

\begin{example}
If $\lambda_0,\lambda_1,\ldots,\lambda_d>0$, then the vectors $v_i := \frac{e_i}{\sqrt{\lambda_i} } -  \frac{e_0}{\sqrt{\lambda_0}}\in \R^{d+1}$, with $i=1,\ldots, d$, satisfy~\eqref{eq:v_i_scalar_prod} and hence $\pos (v_1,\ldots, v_d)$ provides a realization of  $C_d(\lambda_0; \lambda_1,\ldots, \lambda_d; 1,\ldots, 1)$.
\end{example}

\begin{example}
The nonnegative orthant $\pos (e_1,\ldots, e_d)$ can be considered as a limiting case of $C_d(\lambda_0; \lambda_1,\ldots, \lambda_d; 1,\ldots, 1)$ when $\lambda_0\to \pm \infty$.
\end{example}

\begin{example}
Let $\lambda_1,\ldots,\lambda_d>0$ and $\lambda_0<-(\lambda_1 + \ldots + \lambda_d)$. Consider the vectors
$$
v_i
:=
\frac{\sqrt{-(\lambda_0 + \lambda_1 + \ldots + \lambda_d)}}{\lambda_0} e_0 + \frac{\sqrt{\lambda_1} e_1 + \ldots + \sqrt{\lambda_d} e_d}{\lambda_0} + \frac{e_i}{\sqrt{\lambda_i}} \in \R^{d+1},
$$
with $i=1, \ldots, d$. Then, $\pos (v_1,\ldots, v_d)$ is isometric to  $C_d(\lambda_0; \lambda_1,\ldots, \lambda_d; 1,\ldots, 1)$. Instead of verifying~\eqref{eq:v_i_scalar_prod} directly, we put $\tau_1:= \sqrt{\lambda_1}, \ldots,\tau_d:= \sqrt{\lambda_d}$, $\tau_0:= \sqrt{-(\lambda_0 + \lambda_1 + \ldots + \lambda_d)}$ and observe that  the tangent cone of the obtuse simplex $S$ as defined in~\eqref{eq:orthocentric_simpl_obtuse_canonical} at its vertex $w$  is given by  $T(\{w\},S) = \pos (\frac{e_1}{\tau_1} - w,\ldots,\frac{e_d}{\tau_d} - w)= \pos (v_1,\ldots, v_d)$. By Theorem~\ref{theo:cones_angles_orthoc_simplices_outside} (c), this cone is isometric to $C_{d}(-\tau_0^2-\tau_{1}^2-\ldots -\tau_d^2; \tau_{1}^2,\ldots, \tau_d^2; 1,\ldots, 1 )$.
\end{example}

\begin{example}
Let $\lambda_0,\lambda_2,\ldots,\lambda_{d}>0$,  $\lambda_1<0$ and  $\lambda_0 + \ldots + \lambda_{d}<0$. In $\R^{d+1}$ we consider the vectors $v_i := \frac{e_i}{\sqrt{\lambda_i} } -  \frac{e_1}{\sqrt{\lambda_0}}$,
$i=2,\ldots, d$, as well as
$$
v_1
:=
\frac{\sqrt{-(\lambda_0 + \lambda_1 + \ldots + \lambda_{d})}e_0 + \sqrt{\lambda_0} e_1 + \sqrt{\lambda_2} e_2 +  \ldots + \sqrt{\lambda_{d}} e_{d}}{-\lambda_1} - \frac{e_1}{\sqrt{\lambda_0}}.
$$
Then, $\pos (v_1,\ldots, v_d)$ is isometric to  $C_d(\lambda_0; \lambda_1,\ldots, \lambda_d; 1,\ldots, 1)$. To see this, we put $\tau_1:= \sqrt{\lambda_0}, \tau_2:= \sqrt{\lambda_2}, \ldots,\tau_d:= \sqrt{\lambda_d}$, $\tau_0:= \sqrt{-(\lambda_0 + \lambda_1 + \ldots + \lambda_d)}$ and observe that  the tangent cone of the obtuse simplex $S$ as defined in~\eqref{eq:orthocentric_simpl_obtuse_canonical} at its vertex $\frac{e_1}{\tau_1}$  is given by  $T(\{\frac{e_1}{\tau_1}\},S) = \pos (w-\frac{e_1}{\tau_1}, \frac{e_2}{\tau_2} - \frac{e_1}{\tau_1},\ldots,\frac{e_d}{\tau_d} - \frac{e_1}{\tau_1})= \pos (v_1,\ldots, v_d)$. By Theorem~\ref{theo:cones_angles_orthoc_simplices_outside} (b), this cone is isometric to $C_{d}(\tau_1^2; -\tau_0^2-\ldots -\tau_d^2, \tau_{2}^2,\ldots, \tau_d^2; 1,\ldots, 1 )$.
\end{example}

\section{Applications to Gaussian polytopes}

Let \( g_1, \ldots, g_n \) be independent standard Gaussian vectors in \( \mathbb{R}^d \), and let \( \tau_1, \ldots, \tau_n > 0 \) be constants. In this section, we compute the expected number of \( k \)-dimensional faces and the expected volume of the Gaussian random polytope \([g_1/\tau_1, \ldots, g_n/\tau_n]\) in terms of the function \( \bg_d \).

\subsection{Random projections of simplices}
The expected $f$-vector of the random polytope that arises as the image of an original polytope $P$ under a standard Gaussian matrix can be expressed in terms of the internal and external angles of $P$, as follows.
\begin{theorem}
	Let $P\subseteq \R^N$ be a polytope and $d\in \{1, \ldots, \dim P \}$. Let $G$ be a random $d\times N$ matrix with entries that are independent and standard Gaussian distributed. Then, the expected number of $k$-dimensional faces of the random polytope $GP = \{ Gx:x\in P\}\subseteq \R^d$ is given by
	\begin{equation}\label{eq:AffSchn_similar}
		\E f_k(GP) = 2 \sum_{s=0}^\infty \sum_{H\in \cF_{d-1-2s}(P)} \gamma(H,P) \sum_{F\in \cF_k(H)} \beta(F,H)
	\end{equation}
	for all $k\in \{0, \ldots, \dim P\}$.

	We emphasize that the sum on the right-hand side of \eqref{eq:AffSchn_similar} terminates after finitely many non-zero terms.
\end{theorem}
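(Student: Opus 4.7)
The plan is to reduce the statement for the random Gaussian matrix to the classical Affentranger--Schneider formula for orthogonal projections of a polytope onto a uniform random linear subspace.

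First, I would exploit the right-orthogonal invariance of the distribution of $G$: since $GO$ has the same law as $G$ for every orthogonal matrix $O\in O(N)$, the row space $L:=\lin(\text{rows of }G)$ is almost surely $d$-dimensional and uniformly distributed on the Grassmannian of $d$-dimensional linear subspaces of $\R^N$. The map $x\mapsto Gx$ then factors as the orthogonal projection $\pi_L:\R^N\to L$ composed with the restriction $G|_L:L\to\R^d$, which is almost surely a linear isomorphism. Since the $f$-vector is invariant under linear isomorphisms, $f_k(GP)=f_k(\pi_L P)$ almost surely, so it suffices to compute $\E f_k(\pi_L P)$.

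Second, I would invoke the Affentranger--Schneider formula for the expected $f$-vector of a uniform random projection of a polytope, which states
\begin{equation*}
\E f_k(\pi_L P) = 2 \sum_{s=0}^\infty \sum_{H\in \cF_{d-1-2s}(P)} \gamma(H,P) \sum_{F\in \cF_k(H)} \beta(F,H).
\end{equation*}
Combining with Step~1 yields the theorem. The sum terminates because $\cF_{d-1-2s}(P)=\varnothing$ unless $k\le d-1-2s\le\dim P$, bounding the admissible $s$ by at most $\lceil(d-1-k)/2\rceil+1$.

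The main obstacle is establishing the Affentranger--Schneider identity itself. A self-contained derivation proceeds by fixing a $k$-face $F$ of $P$ and analysing when $\pi_L(F)$ persists as a $k$-face of $\pi_L P$: this happens precisely when the random subspace $L^{\perp}$ meets the relative interior of the normal cone $N(F,P)$, and the associated probability can be expressed through the conic kinematic formula in terms of conic intrinsic volumes of $N(F,P)$. Regrouping the resulting expressions according to the smallest face $H$ of $P$ whose normal cone contains the intersecting ray, together with the identity linking internal/external angles to conic intrinsic volumes of normal cones, produces the factor $\gamma(H,P)\beta(F,H)$ and the extra factor of $2$ reflecting the two antipodal half-spheres. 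In this paper, I would simply cite the Affentranger--Schneider formula rather than reprove it.
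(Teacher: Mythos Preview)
Your proposal is correct and matches the paper's approach: the paper does not give a self-contained proof either, but simply cites Affentranger--Schneider \cite{AS92} and remarks that their argument applies to any random linear map whose kernel is uniformly distributed on the Grassmannian of $(N-d)$-dimensional subspaces of $\R^N$. Your factorisation $G = (G|_L)\circ\pi_L$ via the row space $L$ makes this reduction explicit, and since $\ker G = L^\perp$ is uniform on the Grassmannian, the two formulations are equivalent.
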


We refer to~\cite{AS92} for a proof of a similar result, where a uniform projection instead of a standard Gaussian projection $G$ is considered. The same proof works for any random linear map, whose kernel is uniformly distributed on the Grassmanian of all $(N-d)$-dimensional linear subspaces of $\R^N$.

\subsection{Intrinsic volumes and Tsirelson's theorem}
The intrinsic volumes $V_0(A), \ldots, V_d(A)$ of a compact convex set $A \subseteq \R^d$  are defined via the Steiner formula
$$
\Vol_d(A+r\BB^d)
=
\sum_{k=0}^d \kappa_{d-k} V_k(A) r^{d-k}, \qquad r>0.
$$
For a polytope $P\subseteq \R^d$, the  intrinsic volumes are given by
\begin{equation}\label{eq:intrinsic_volume_polytope}
	V_k(P)
	=
	\sum_{F\in \cF_k(P)} \Vol_k(F) \gamma(F,P);
\end{equation}
see~\cite[Equation (14.14)]{SW08}.
The expected volume of the Gaussian projection of a convex compact set can be expressed in terms of its intrinsic volumes. The next result was originally established by Tsirelson~\cite{tsirelson_2}, with a more accessible proof later provided by Vitale~\cite{Vitale08}.
\begin{theorem}[Tsirelson]\label{theo:Tsirelson}
	Let $K \subseteq \R^n$ be convex and compact. For $d\in \{1, \ldots, n\}$ let $G$ denote a $d \times n$ matrix with independent standard Gaussian distributed entries. The expected volume of
	$G K=\{ Gx:x\in K \}\subseteq \R^d$ is given by
	$$
	\E \Vol_d(GK)
	=
	\frac{2^{d/2} \Gamma \left( \frac{d+1}{2} \right)}{\sqrt{\pi}} V_d(K).
	$$
\end{theorem}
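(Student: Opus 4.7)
My plan is to apply a QR-type decomposition to $G^\top$, which splits the random linear map into an orthogonal projection onto a uniformly random $d$-dimensional subspace and an independent upper-triangular factor, and then to combine Kubota's classical projection formula from integral geometry with a direct computation of $\E|\det R|$. No polytope approximation is required: the argument works for every compact convex $K$.

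First I would write $G^\top = QR$, with $Q$ an $n\times d$ matrix whose columns are orthonormal and $R$ a $d\times d$ upper-triangular matrix with positive diagonal, so that $G = R^\top Q^\top$. Set $L := \operatorname{col}(Q)$. Since $Q^\top$ restricts to an isometry of $L$ onto $\R^d$ and annihilates $L^\perp$, one has $Q^\top K = Q^\top(\pi_L K)$, where $\pi_L$ denotes orthogonal projection onto $L$, and therefore
\begin{equation*}
\Vol_d(GK) = |\det R|\cdot \Vol_d(\pi_L K).
\end{equation*}
For the iid standard Gaussian matrix, $Q$ and $R$ are independent, $L$ is distributed according to the unique rotation-invariant probability measure $\nu$ on the Grassmannian $G(n,d)$, and the diagonal entries $R_{11},\ldots,R_{dd}$ are independent with $R_{ii}\sim\chi_{n-i+1}$.

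Taking expectations and invoking Kubota's projection formula
\begin{equation*}
\int_{G(n,d)}\Vol_d(\pi_L K)\, d\nu(L) = \frac{\kappa_d \kappa_{n-d}}{\binom{n}{d}\kappa_n}\, V_d(K)
\end{equation*}
reduces the problem to computing $\E|\det R| = \prod_{i=1}^d \E R_{ii}$. Using $\E\chi_k = \sqrt{2}\,\Gamma((k+1)/2)/\Gamma(k/2)$, the product telescopes to
\begin{equation*}
\E|\det R| = 2^{d/2}\,\frac{\Gamma((n+1)/2)}{\Gamma((n-d+1)/2)}.
\end{equation*}

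The final step is purely algebraic: expand $\kappa_k = \pi^{k/2}/\Gamma(k/2+1)$ and $\binom{n}{d} = \Gamma(n+1)/[\Gamma(d+1)\Gamma(n-d+1)]$, then apply the Legendre duplication formula $\Gamma(m+1) = (2^m/\sqrt\pi)\,\Gamma((m+1)/2)\,\Gamma(m/2+1)$ to each of $\Gamma(n+1)$, $\Gamma(d+1)$, and $\Gamma(n-d+1)$. All $n$-dependent Gamma factors cancel and the prefactor collapses to $2^{d/2}\Gamma((d+1)/2)/\sqrt{\pi}$. The main obstacle is precisely this bookkeeping: the simplification is mechanical but error-prone because several Gamma arguments coincide, and some care is needed to verify that the dependence on the ambient dimension $n$ truly disappears.
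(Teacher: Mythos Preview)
The paper does not prove Theorem~\ref{theo:Tsirelson}; it is quoted as an external result, attributed to Tsirelson with a later, more accessible proof by Vitale. There is therefore no in-paper proof to compare against.

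Your argument is correct and is essentially the Vitale-style proof: the Bartlett/QR decomposition $G^\top=QR$ factors $G$ into an isometric projection onto a uniform random $d$-plane and an independent linear map, so that $\Vol_d(GK)=|\det R|\,\Vol_d(\pi_L K)$; Kubota's formula handles the projection expectation, the chi-distribution of the $R_{ii}$ gives the telescoping product $\E|\det R|=2^{d/2}\Gamma((n+1)/2)/\Gamma((n-d+1)/2)$, and the Legendre duplication identity collapses the remaining Gamma factors to the claimed $n$-independent constant. I checked the cancellation and it goes through exactly as you describe.
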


\subsection{Results on  Gaussian polytopes}
Let $g_1,\dots, g_n$ be independent $\R^d$-valued standard Gaussian random vectors and $\tau_1, \ldots, \tau_d>0$. In the following, we shall compute expected functionals of Gaussian polytopes of the form $[g_1/\tau_1,\ldots, g_n/\tau_n]\subseteq \R^d$. The idea is to view them as Gaussian projections of the simplex $T=[e_1/\tau_1, \ldots, e_n/\tau_n]\subseteq \R^n$, as in Example~\ref{example:e_i/tau_i}, but with slightly different indexing. From Theorem~\ref{theo:orthocentr_simpl_relint_tangent_normal_angles}, recall that the internal and external angles of this simplex at its face $F=[e_1/\tau_1, \ldots, e_k/\tau_k]$ are given by
\begin{align}
\beta(F,T)
=&
\bg_{n-k}(-\tau_1^2-\ldots-\tau_n^2;\tau_{k+1}^2,\dots,\tau_n^2; 1, \ldots, 1),\label{eq:gauss_poly_proof_beta}\\
\gamma(F,T)
=&
\bg_{n-k}(\tau_1^2+\ldots+\tau_k^2;\tau_{k+1}^2,\dots,\tau_n^2; 1,\ldots ,1).\label{eq:gauss_poly_proof_gamma}
\end{align}

\begin{theorem}\label{theo:exp_face_number_gaussian}
For $d \geq 1$ and $n\geq d+1$, let $\tau_1,\dots, \tau_n>0$ and $g_1,\dots, g_n$ be independent $\R^d$-valued standard Gaussian random vectors. Then, for $k\in \{0,\dots,d-1\}$ it holds that
\begin{align*}
\E f_k \left(\left[\frac{g_1}{\tau_1},\ldots, \frac{g_n}{\tau_n}\right] \right)
=
2\sum_{s=0}^{\infty} \sum_{1\leq i_1<\ldots <i_{d-2s}\leq n} \bg_{n-d+2s} (\tau_{i_1}^2 + \ldots + \tau_{i_{d-2s}}^2;\tau_{j_1}^2,\dots, \tau_{j_{n-d-2s}}^2) \times\\
\times \sum_{1\leq m_1 <\ldots <m_{d-2s-k-1}\leq d-2s}
\bg_{d-2s-1-k}(-\tau_{i_1}^2 - \ldots - \tau_{i_{d-2s}}^2;\tau_{i_{m_1}}^2,\ldots, \tau_{i_{m_{d-2s-k-1}}}^2),
\end{align*}
where, in the second sum, $j_1, \ldots, j_{n-d+2s}$ are defined via $\{i_1,\dots, i_{d-2s}\} \cup \{j_1,\dots, j_{n-d+2s}\} = \{1,\dots, n\}$.
\end{theorem}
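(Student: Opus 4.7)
The plan is to realize the Gaussian polytope as a random linear image of a deterministic acute orthocentric simplex and then combine the face-counting formula~\eqref{eq:AffSchn_similar} with the angle formulas of Theorem~\ref{theo:orthocentr_simpl_relint_tangent_normal_angles}. Let $G$ be a $d \times n$ matrix with i.i.d.\ standard Gaussian entries; its columns $Ge_1, \ldots, Ge_n$ are independent standard Gaussian vectors in $\R^d$, which I identify with $g_1, \ldots, g_n$. Setting $T := [e_1/\tau_1, \ldots, e_n/\tau_n] \subseteq \R^n$, we have $GT = [g_1/\tau_1, \ldots, g_n/\tau_n]$. Since the kernel of $G$ is rotationally invariant and hence uniformly distributed on the Grassmannian of $(n-d)$-dimensional linear subspaces, formula~\eqref{eq:AffSchn_similar} applies and yields
\[
\E f_k\bigl([g_1/\tau_1, \ldots, g_n/\tau_n]\bigr) = 2 \sum_{s=0}^\infty \sum_{H \in \cF_{d-1-2s}(T)} \gamma(H, T) \sum_{F \in \cF_k(H)} \beta(F, H).
\]

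Next, I would enumerate the faces explicitly and plug in the angle formulas. The $(d-1-2s)$-dimensional faces of $T$ are the simplices $H_I := [e_{i_1}/\tau_{i_1}, \ldots, e_{i_{d-2s}}/\tau_{i_{d-2s}}]$ indexed by $(d-2s)$-element subsets $I = \{i_1 < \ldots < i_{d-2s}\} \subseteq \{1, \ldots, n\}$. After relabeling coordinates, the external angle formula~\eqref{eq:gauss_poly_proof_gamma} gives
\[
\gamma(H_I, T) = \bg_{n-d+2s}\bigl(\tau_{i_1}^2 + \ldots + \tau_{i_{d-2s}}^2; \tau_{j_1}^2, \ldots, \tau_{j_{n-d+2s}}^2; 1, \ldots, 1\bigr),
\]
where $\{j_1, \ldots, j_{n-d+2s}\} := \{1, \ldots, n\} \setminus I$. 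Crucially, $H_I$ itself is an acute orthocentric simplex with $d-2s$ vertices on coordinate axes, so a second application of~\eqref{eq:gauss_poly_proof_beta} computes its internal angles. A $k$-dimensional face $F \subseteq H_I$ is determined by the complementary $(d-2s-k-1)$-element set $\{m_1 < \ldots < m_{d-2s-k-1}\} \subseteq \{1, \ldots, d-2s\}$ of \emph{non-face}-vertex positions within $H_I$, and one obtains
\[
\beta(F, H_I) = \bg_{d-2s-k-1}\bigl(-\tau_{i_1}^2 - \ldots - \tau_{i_{d-2s}}^2; \tau_{i_{m_1}}^2, \ldots, \tau_{i_{m_{d-2s-k-1}}}^2; 1, \ldots, 1\bigr).
\]

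Substituting these expressions into the face-counting identity yields precisely the claimed formula. The proof is thus essentially a composition of two independent inputs---the projection identity~\eqref{eq:AffSchn_similar} and the orthocentric-simplex angle formulas of Theorem~\ref{theo:orthocentr_simpl_relint_tangent_normal_angles}---and the main obstacle is purely combinatorial: carefully tracking the three index sets (the face indices $I \subseteq \{1,\ldots,n\}$, their complement $\{j_1,\ldots,j_{n-d+2s}\}$, and the complementary subfaces $\{m_1,\ldots,m_{d-2s-k-1}\} \subseteq \{1,\ldots,d-2s\}$) so that the sums match the stated form. As a small sanity check, one should verify that the arguments of $\bg$ fall into the admissible cases of Definition~\ref{def:g_d_function}: for $\gamma$ all $\lambda_i$ are positive (case (A)), while for $\beta$ one has $\lambda_0 = -(\tau_{i_1}^2 + \ldots + \tau_{i_{d-2s}}^2) < 0$ and $\lambda_0 + \lambda_1 + \ldots + \lambda_{d-2s-k-1}$ equals minus the sum of squares of the $\tau$'s at the $k+1$ face vertices of $F$, which is strictly negative (case (B)).
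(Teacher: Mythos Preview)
Your proposal is correct and follows essentially the same route as the paper: realize the Gaussian polytope as $GT$ for $T=[e_1/\tau_1,\ldots,e_n/\tau_n]$, invoke the projection identity~\eqref{eq:AffSchn_similar}, enumerate the faces of $T$ and of each $H_I$, and substitute the angle formulas~\eqref{eq:gauss_poly_proof_gamma} and~\eqref{eq:gauss_poly_proof_beta}, re-indexing the inner sum over complementary vertex positions. Your added sanity check on the admissibility of the $\bg$-arguments is a nice touch not spelled out in the paper's proof.
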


\begin{proof}
Let $G$ be a $d\times n$ matrix with independent standard Gaussian distributed entries and $T=[e_1/ \tau_1, \ldots, e_n/\tau_n]$. Then, we have that $GT \eqdistr [g_1/\tau_1,\ldots, g_n/\tau_n]$. Hence,~\eqref{eq:AffSchn_similar} entails for $k\in \{0,\dots, d-1\}$ that
$$
\E f_k \left(\left[\frac{g_1}{\tau_1},\ldots, \frac{g_n}{\tau_n}\right] \right) = \E f_k(GT) = 2 \sum_{s=0}^\infty \sum_{H\in \cF_{d-1-2s}(T)} \gamma(H,T) \sum_{F\in \cF_k(H)} \beta(F,H).
$$
For the second sum, note that $H\in \cF_{d-1-2s}(T)$ if and only if there exist $1\leq i_1<\ldots <i_{d-2s}\leq n$ with $H=[e_{i_1}/\tau_{i_1}, \dots, e_{i_{d-2s}}/\tau_{i_{d-2s}}]$.
Similarly, $F$ is  a $k$-dimensional face of $H$ if and only if there exist $1\leq l_1<\ldots <   l_{k+1}\leq d-2s$ such that $F=[e_{i_{l_1}}/\tau_{i_{l_1}}, \dots, e_{i_{l_{k+1}}}/\tau_{i_{l_{k+1}}}]$.

The corresponding angles $\gamma(H, T)$ and $\beta(F, H)$ are taken from~\eqref{eq:gauss_poly_proof_gamma} and~\eqref{eq:gauss_poly_proof_beta}. Finally, the inner sum is taken over all possibilities for $\{m_1,\dots, m_{d-2s-k-1}\}=\{i_1,\dots, i_{d-2s}\} \setminus \{l_1, \dots, l_{k+1}\}$ instead of all possibilities for $\{l_1, \dots, l_{k+1}\}$.
\end{proof}

A closely related work is~\cite{kabluchko_steigenberger_beta_poly_beta_cones}, which computes the expected $f$-vectors of the so-called beta polytopes. As the beta parameters tend to infinity, these expected $f$-vectors converge to those of the Gaussian polytopes considered in Theorem~\ref{theo:exp_face_number_gaussian}.

\begin{theorem}\label{theo:intrinsic_vol_e_i/tau_i}
Let $\tau_0, \ldots, \tau_d >0$. For $k\in \{0, \ldots, d\}$, the $k$-th intrinsic volume of the simplex $S=[e_0/\tau_0, \ldots, e_d/\tau_d]$ is given by
$$
V_k(S)
=
\frac{1}{k!} \sum_{0 \leq i_0 < \ldots < i_k \leq d} \frac{\sqrt{\tau_{i_0}^2 + \ldots + \tau_{i_k}^2}}{\tau_{i_0} \cdots \tau_{i_k}}
\bg_{d-k} \left( \tau_{i_0}^2 + \ldots + \tau_{i_k}^2; \tau_{j_1}^2, \ldots, \tau_{j_{d-k}}^2; 1, \ldots, 1 \right),
$$
where $j_1, \ldots, j_{d-k}$ are defined via $\{i_0,\dots, i_k\} \cup \{j_1,\dots, j_{d-k}\} = \{0,\dots, d\}$.
\end{theorem}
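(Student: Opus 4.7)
The plan is to apply formula~\eqref{eq:intrinsic_volume_polytope}, namely
$$V_k(S) = \sum_{F\in \cF_k(S)} \Vol_k(F)\,\gamma(F,S),$$
since $S$ is a polytope. The $k$-dimensional faces of $S=[e_0/\tau_0,\dots, e_d/\tau_d]$ are in bijection with $(k{+}1)$-subsets of $\{0,\dots, d\}$; to each $0\leq i_0<\ldots<i_k\leq d$ corresponds the face $F_{i_0,\ldots,i_k}:=[e_{i_0}/\tau_{i_0},\ldots, e_{i_k}/\tau_{i_k}]$. The proof then reduces to two separate computations: the external angle $\gamma(F_{i_0,\ldots,i_k},S)$ and the $k$-dimensional volume $\Vol_k(F_{i_0,\ldots,i_k})$.

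For the external angle I would invoke Theorem~\ref{theo:orthocentr_simpl_relint_tangent_normal_angles}(d). Although that theorem is stated only for the face $[e_0/\tau_0,\ldots, e_k/\tau_k]$, the whole situation is invariant under permutations of the indices $\{0,\ldots,d\}$ (a permutation acts as an isometry on $\R^{d+1}$ that maps $S$ to itself). Applying such a permutation yields
$$\gamma(F_{i_0,\ldots,i_k},S) = \bg_{d-k}\bigl(\tau_{i_0}^2+\ldots+\tau_{i_k}^2;\,\tau_{j_1}^2,\ldots,\tau_{j_{d-k}}^2;\,1,\ldots,1\bigr),$$
where $\{j_1,\ldots,j_{d-k}\}=\{0,\ldots,d\}\setminus\{i_0,\ldots,i_k\}$, matching the second factor in the asserted formula.

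For the volume, the standard identity gives $\Vol_k(F_{i_0,\ldots,i_k})=\tfrac{1}{k!}\sqrt{\det G}$, where $G$ is the $k\times k$ Gram matrix of the edge vectors $v_\ell - v_0 := e_{i_\ell}/\tau_{i_\ell} - e_{i_0}/\tau_{i_0}$ for $\ell=1,\ldots,k$. A direct computation yields
$$\langle v_\ell-v_0,\, v_m-v_0\rangle = \frac{1}{\tau_{i_0}^2}+\frac{\delta_{\ell m}}{\tau_{i_\ell}^2},$$
which is precisely the matrix appearing in Lemma~\ref{lemma:condition_for_ex_of_orthocentric_cone} with the parameters $\lambda_0=\tau_{i_0}^2$ and $\lambda_\ell=\tau_{i_\ell}^2$. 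The determinant formula~\eqref{eq:det_G} from that lemma (taken with the full index set $I=\{1,\ldots,k\}$) then immediately gives
$$\det G = \frac{\tau_{i_0}^2+\tau_{i_1}^2+\ldots+\tau_{i_k}^2}{\tau_{i_0}^2\,\tau_{i_1}^2\cdots\tau_{i_k}^2},\qquad \Vol_k(F_{i_0,\ldots,i_k}) = \frac{1}{k!}\,\frac{\sqrt{\tau_{i_0}^2+\ldots+\tau_{i_k}^2}}{\tau_{i_0}\cdots \tau_{i_k}}.$$

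Substituting both expressions into~\eqref{eq:intrinsic_volume_polytope} and summing over all $(k{+}1)$-subsets yields the claimed identity. I do not anticipate any real obstacle: all the ingredients have been prepared earlier in the paper, and the only mildly non-trivial observation is the invariance of Theorem~\ref{theo:orthocentr_simpl_relint_tangent_normal_angles} under relabeling of vertices, which is essentially immediate.
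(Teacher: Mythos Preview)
Your proof is correct and follows essentially the same approach as the paper: apply~\eqref{eq:intrinsic_volume_polytope}, identify the $k$-faces, read off the external angles from Theorem~\ref{theo:orthocentr_simpl_relint_tangent_normal_angles}(d), and multiply by the face volumes. The only difference is cosmetic: the paper cites an external reference (\cite[Theorem~5.1, Eqn.~(52)]{Edmonds2005}) for the volume formula, whereas you derive it in-house from the determinant identity~\eqref{eq:det_G}, which is arguably cleaner.
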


\begin{remark}
An explicit representation of the function $\bg_{d-k}$ appearing in Theorem~\ref{theo:intrinsic_vol_e_i/tau_i} will be proven in Theorem~\ref{theo:formula_for_bg_d(lambda_i,eps_i)}. Together, these two results recover~\cite[Theorem 3.1]{kabluchko_zaporozhets_expected_volumes_18}.
\end{remark}

\begin{proof}[Proof of Theorem~\ref{theo:intrinsic_vol_e_i/tau_i}]
We use~\eqref{eq:intrinsic_volume_polytope}. To apply this formula, note that the set of $k$-dimensional faces of $S$ is
$$
\cF_k(S)
=
\left\{ \left[\frac{e_{i_0}}{\tau_{i_0}}, \ldots, \frac{e_{i_k}}{\tau_{i_k}} \right]: 0 \leq i_0 < \ldots < i_k \leq d \right\}.
$$
The external angles of $S$ at these faces are given by Theorem~\ref{theo:orthocentr_simpl_relint_tangent_normal_angles}(d). Finally, the volume of such a simplex is
$$
\Vol_k\left( \left[\frac{e_{i_0}}{\tau_{i_0}}, \ldots, \frac{e_{i_k}}{\tau_{i_k}} \right] \right)
=
\frac{\sqrt{\tau_{i_0}^2 + \ldots + \tau_{i_k}^2}}{k! \cdot \tau_{i_0} \cdots \tau_{i_k}};
$$
see, e.g., \cite[Theorem~5.1, Eqn.~(52)]{Edmonds2005}. Plugging everything into~\eqref{eq:intrinsic_volume_polytope} concludes the proof.
\end{proof}

\begin{theorem}\label{theo:exp_volume_gaussian_simpl}
For $d\geq 1$ and $n\geq d+1$, let $\tau_1, \ldots, \tau_n>0$ and $g_1, \ldots, g_n$ be independent $\R^d$-valued standard Gaussian random vectors. Then,
\begin{align*}
&\E \Vol_d \left( \left[ g_1/\tau_1, \ldots, g_n/\tau_n \right] \right)\\
&=
\frac{2^{d/2} \Gamma \left( \frac{d+1}{2} \right)}{\sqrt{\pi} d!}
\sum_{1 \leq i_1 < \ldots < i_{d+1} \leq n} \frac{\sqrt{\tau_{i_1}^2 + \ldots + \tau_{i_{d+1}}^2}}{\tau_{i_1} \cdots \tau_{i_{d+1}}}
\bg_{n-d-1} \left( \tau_{i_1}^2 + \ldots + \tau_{i_{d+1}}^2; \tau_{j_1}^2, \ldots, \tau_{j_{n-d-1}}^2; 1,\ldots, 1 \right),
\end{align*}
where $j_1, \ldots, j_{n-d-1}$ are defined via $\{i_1,\dots, i_{d+1}\} \cup \{j_1,\dots, j_{n-d-1}\} = \{1,\dots, n\}$.
\end{theorem}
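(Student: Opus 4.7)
The plan is to realize the random polytope as a Gaussian projection of a higher-dimensional simplex whose intrinsic volumes were already computed in Theorem~\ref{theo:intrinsic_vol_e_i/tau_i}, and then apply Tsirelson's theorem.

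More precisely, let $G$ be the $d \times n$ matrix with independent standard Gaussian entries whose $i$-th column is $g_i$, and consider the simplex $T := [e_1/\tau_1, \ldots, e_n/\tau_n] \subseteq \R^n$ (this is the simplex from Example~\ref{example:e_i/tau_i} with the indexing shifted from $0,\ldots,d$ to $1,\ldots,n$). Since $G(e_i/\tau_i) = g_i/\tau_i$, we have the distributional identity
\[
[g_1/\tau_1, \ldots, g_n/\tau_n] \eqdistr GT.
\]
Note that $\dim T = n - 1 \geq d$, so Tsirelson's theorem (Theorem~\ref{theo:Tsirelson}) applies and gives
\[
\E \Vol_d([g_1/\tau_1, \ldots, g_n/\tau_n]) = \E \Vol_d(GT) = \frac{2^{d/2} \Gamma\!\left(\frac{d+1}{2}\right)}{\sqrt{\pi}} V_d(T).
\]

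It remains to compute $V_d(T)$. For this, I would directly apply Theorem~\ref{theo:intrinsic_vol_e_i/tau_i}, which evaluates the $k$-th intrinsic volume of an acute orthocentric simplex of the form $[e_0/\tau_0, \ldots, e_m/\tau_m]$. Taking $m = n-1$, $k = d$ (noting that the only change is the indexing of coordinates, which does not affect intrinsic volumes) yields
\[
V_d(T) = \frac{1}{d!} \sum_{1 \leq i_1 < \ldots < i_{d+1} \leq n} \frac{\sqrt{\tau_{i_1}^2 + \ldots + \tau_{i_{d+1}}^2}}{\tau_{i_1} \cdots \tau_{i_{d+1}}} \bg_{n-d-1}(\tau_{i_1}^2 + \ldots + \tau_{i_{d+1}}^2; \tau_{j_1}^2, \ldots, \tau_{j_{n-d-1}}^2; 1, \ldots, 1),
\]
with $\{j_1, \ldots, j_{n-d-1}\}$ being the complement of $\{i_1, \ldots, i_{d+1}\}$ in $\{1, \ldots, n\}$. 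Substituting this into the displayed identity above gives the claimed formula.

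There is essentially no obstacle: both ingredients (Tsirelson's theorem and the intrinsic volume formula for orthocentric simplices) are already in place, and the only thing to verify is the routine distributional identity $[g_1/\tau_1, \ldots, g_n/\tau_n] \eqdistr GT$, which follows from the fact that the columns $G e_i$ of a standard Gaussian matrix are i.i.d.\ standard Gaussian vectors in $\R^d$.
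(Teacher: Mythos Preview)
Your proof is correct and follows essentially the same approach as the paper: represent the random polytope as a Gaussian projection of the simplex $T=[e_1/\tau_1,\ldots,e_n/\tau_n]$, apply Tsirelson's theorem, and plug in the intrinsic volume formula from Theorem~\ref{theo:intrinsic_vol_e_i/tau_i} (with the index shift noted).
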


\begin{proof}
Write $T=[e_1/\tau_1, \ldots, e_n/\tau_n]$ and let $G$ be a $(d \times n)$-matrix with independent standard Gaussian distributed entries. Then, we have $GT$ has the same distribution as $[g_1/\tau_1, \ldots, g_n/\tau_n]$. The intrinsic volumes of $T$ are given, with a slightly different notation, in Theorem~\ref{theo:intrinsic_vol_e_i/tau_i}. Plugging them into Theorem~\ref{theo:Tsirelson} yields the result.
\end{proof}

\begin{remark}
Theorems~\ref{theo:exp_face_number_gaussian} and~\ref{theo:exp_volume_gaussian_simpl} employ the angles of a simplex $T= [e_1/\tau_1,\ldots, e_n/\tau_n]$ (of the same type as in Example~\ref{example:e_i/tau_i}) to compute the expected number of faces and the expected volume of the random polytope $GT$, where $G$ is a random Gaussian matrix. This method extends naturally to the simplices considered in Examples~\ref{example:degenerate_orthoc_simplex}, \ref{example:orthocenter_outside} and~\ref{exam:orthocenter_outside_alternative}, leading to explicit formulas for the expected number of faces and the expected volume of the random polytopes
$$
\left[ 0, \frac{g_1}{\tau_1}, \ldots, \frac{g_n}{\tau_n} \right],
\qquad
\left[\frac{\tau_0 g_0 + \ldots + \tau_n g_n}{\tau_0^2 + \ldots + \tau_n^2}, \frac{g_1}{\tau_1}, \ldots, \frac{g_n}{\tau_n} \right],
\qquad
\left[ \nu \frac{\tau_1 g_1 + \ldots + \tau_n g_n}{\tau_1^2 + \ldots + \tau_n^2}, \frac{g_1}{\tau_1}, \ldots, \frac{g_n}{\tau_n} \right],
$$
where $\tau_0, \tau_1, \ldots, \tau_n >0$, $\nu\neq 1$, and $g_0, g_1, \ldots, g_n$ are independent standard $d$-dimensional Gaussian distributed random vectors, $n\geq d$.
\end{remark}

\section{Explicit formulas for  the function \texorpdfstring{$\bg_d$}{g\_d}}  \label{sec:explicit_formula_g_d}
This section is devoted to deriving explicit formulas for the angles of oblique orthocentric simplices. These angles, along with various other quantities, have already been expressed in terms of the function \( \bg_d \). Therefore, our primary goal is to obtain explicit formulas for \( \bg_d \). The case of rectangular simplices, which requires a separate treatment, will be discussed in Section~\ref{sec:right_angled_simplices}.

\subsection{Preparation}\label{subsec:preparation_orthant_probab_for_orthocentric_simpl}
The following lemma constitutes a key step toward deriving an explicit formula for the function $\bg_d(\lambda_0; \lambda_1,\ldots, \lambda_d; \eps_1,\ldots, \eps_d)$ in the special case where $\lambda_1>0,\ldots, \lambda_d>0$.

\begin{lemma}[An orthant probability]\label{lemma:proba_gaussian_greater_const}
Let $d\in \N$, $\lambda_1, \ldots, \lambda_d>0$  and $r>-1/(\lambda_1+\ldots +\lambda_d)$. Let $(\eta_1, \ldots, \eta_d)$ be multivariate Gaussian distributed with mean $0$ and covariance matrix
\begin{equation}\label{eq:covariance_matrix_r}
\Sigma
:=
\left( r+ \frac{\delta_{ij}}{\lambda_i} \right)_{i,j=1}^d.
\end{equation}
Then, for all $t_1, \ldots, t_d \in~\R$ and $\eps_1, \ldots, \eps_d \in \{\pm 1\}$ we have
\begin{align}
&\P\left[ \eps_1\eta_1 \geq t_1, \ldots, \eps_d\eta_d \geq t_d\right] \label{eq:orthant_probab_for_g_d_0}\\
&=
\frac{1}{\sqrt{2 \pi}} \int_0^{\infty} \left( \prod_{j=1}^d \Phi \left((\eps_j \sqrt{r}x -t_j)\sqrt{\lambda_j} \right) + \prod_{j=1}^d \Phi \left((-\eps_j \sqrt{r}x -t_j)\sqrt{\lambda_j} \right) \right) \eee^{-x^2/2} \dd x \label{eq:orthant_probab_for_g_d_1}\\
&=
\frac{1}{\sqrt{2 \pi}} \int_{-\infty}^{\infty}  \prod_{j=1}^d \Phi \left((\eps_j \sqrt{r}x -t_j)\sqrt{\lambda_j} \right) \eee^{-x^2/2} \dd x. \label{eq:orthant_probab_for_g_d_2}
\end{align}
Here, $\Phi:\CC\to \CC$ denotes the  distribution function of the standard normal distribution, as defined in~\eqref{eq:compl_std_normal_distr}. For real $r<0$ we use the convention $\sqrt{r}=i\sqrt{-r}$.

Moreover, if we allow $r$ to be complex, the expression in~\eqref{eq:orthant_probab_for_g_d_1}  does not depend on the choice of the sign of $\sqrt r$      and represents an analytic function of $r$ on the half-plane
$$
\mathcal D := \{r\in \CC: \Re r > -1 / (\lambda_1+\ldots + \lambda_d)\}.
$$
\end{lemma}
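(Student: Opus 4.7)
The plan is to establish the identities \eqref{eq:orthant_probab_for_g_d_1}--\eqref{eq:orthant_probab_for_g_d_2} first for real $r>0$ by a conditioning argument, then to propagate them to the remaining real $r\in(-1/\Lambda,0]$ (with $\Lambda:=\lambda_1+\ldots+\lambda_d$) by analytic continuation along the real axis, and finally to prove the complex-analyticity assertion on $\mathcal D$ by a uniform dominated bound.

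For $r>0$ I would realize the Gaussian vector via the ``mean-plus-noise'' decomposition: let $\xi,\zeta_1,\ldots,\zeta_d$ be i.i.d.\ $N(0,1)$ and set $\eta_j:=\sqrt{r}\,\xi+\zeta_j/\sqrt{\lambda_j}$. A short calculation confirms that the resulting covariance matrix is the $\Sigma$ in \eqref{eq:covariance_matrix_r}. Conditional on $\xi$, the events $\{\eps_j\eta_j\geq t_j\}$ are independent, and each has conditional probability $\Phi((\eps_j\sqrt{r}\,\xi-t_j)\sqrt{\lambda_j})$ (the sign of $\eps_j$ being absorbed by symmetry of the standard Gaussian). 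Taking expectation against $\xi\sim N(0,1)$ yields \eqref{eq:orthant_probab_for_g_d_2}; splitting the integral at $0$ and substituting $x\mapsto-x$ on the negative half produces the two summands in \eqref{eq:orthant_probab_for_g_d_1}.

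For real $r\in(-1/\Lambda,0]$ I would argue by analytic continuation. Note first that the integrand in \eqref{eq:orthant_probab_for_g_d_1} is invariant under $\sqrt{r}\mapsto-\sqrt{r}$ (which merely swaps the two summands), so with the convention $\sqrt{r}=i\sqrt{-r}$ for $r<0$ the integral is real. Both sides of the purported equality are real-analytic in $r$ on $(-1/\Lambda,\infty)$: the left side because $\Sigma$ is positive definite there (Lemma~\ref{lemma:condition_for_ex_of_orthocentric_cone}, case~(B) with $\lambda_0:=1/r$) and the nondegenerate Gaussian density depends analytically on its covariance entries; the right side by differentiation under the integral sign, legitimized by the dominated bound derived below. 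Since both sides agree on $(0,\infty)$, the identity theorem forces equality on the entire interval.

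For the complex-analytic claim, fix a compact $K\subset\mathcal D$ and choose $\delta>0$ with $\Re r\geq -1/\Lambda+\delta$ for all $r\in K$. Integrating $\eee^{-u^2/2}$ along the segment $[0,z]$ gives the pointwise bound $|\Phi(z)|\leq \tfrac12+\tfrac{|z|}{\sqrt{2\pi}}\exp\bigl(\max(0,-\Re(z^2))/2\bigr)$. Applied with $z=(\eps_j\sqrt{r}\,x-t_j)\sqrt{\lambda_j}$, and using $\Re(z^2)=\lambda_j(\Re r)\,x^2+O(x)$ as $|x|\to\infty$, each of the $d$ factors in the integrand of \eqref{eq:orthant_probab_for_g_d_1} is dominated on $K$ by $C_K(1+|x|)\exp(\max(0,-\Re r)\,\lambda_j x^2/2)$. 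Multiplying these together with the Gaussian weight $\eee^{-x^2/2}$ produces an integrand dominated by $C_K'(1+|x|)^d\eee^{-\delta\Lambda x^2/2}$, which is integrable, so Morera's theorem combined with Fubini yields analyticity on $\mathcal D$. The main obstacle is precisely this last bound: a naive estimate using only $|\Im\sqrt{r}|$ would restrict analyticity to a parabolic subdomain, and recovering the full half-plane $\Re r>-1/\Lambda$ requires exploiting the direction-sensitive growth $|\Phi(z)|\lesssim \exp(-\Re(z^2)/2)$ together with the identity $\Re\bigl((\sqrt r)^2\bigr)=\Re r$.
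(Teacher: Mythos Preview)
Your approach mirrors the paper's almost exactly: both prove the case $r>0$ by the same conditioning/decomposition trick, then extend by analytic continuation, and both establish analyticity of the integral expression \eqref{eq:orthant_probab_for_g_d_1} via a dominated bound plus Morera--Fubini. The paper uses the slightly cleaner estimate $|\Phi(z)|\leq C\max\{1,e^{-\Re(z^2)/2}\}$ (obtained from the Mills-ratio asymptotics of $\Phi$) rather than your segment-integral bound, but both lead to the same integrable majorant $\sim e^{-cx^2}$. Your treatment of the $O(x)$ cross-terms in $\Re(z^2)$ is a bit loose---they do not literally disappear into $C_K$---but a standard $\varepsilon$-trick ($|ax|\leq \varepsilon x^2+a^2/(4\varepsilon)$) repairs this, just as the paper does explicitly.

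The one genuine soft spot is the sentence ``the left side [is real-analytic] because\ldots the nondegenerate Gaussian density depends analytically on its covariance entries.'' Pointwise analyticity of the density does not by itself give analyticity of an integral over an unbounded orthant; you still need a dominated bound allowing Morera/differentiation under the integral. The paper devotes a full step to exactly this: it writes the probability as an explicit integral using $\det\Sigma=(1+r\Lambda)/(\lambda_1\cdots\lambda_d)$ and $(\Sigma^{-1})_{ij}=\lambda_i\delta_{ij}-r\lambda_i\lambda_j/(1+r\Lambda)$, then bounds the exponent via Jensen's inequality $(\sum_j\eps_j\lambda_jx_j)^2\leq\Lambda\sum_j\lambda_jx_j^2$ to get an integrable majorant uniform on compacta of $\mathcal D$, and applies Morera. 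Once you supply that bound, your argument is complete and equivalent to the paper's (which in fact does the continuation directly on the complex half-plane rather than first on the real interval).
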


\begin{remark}
The proof of Lemma~\ref{lemma:proba_gaussian_greater_const} given below builds on the method used in the proof of~\cite[Proposition~1.4]{kabluchko_zaporozhets_absorption}, which corresponds to the special case \(\lambda_1 = \ldots = \lambda_d\). Earlier versions of this method appeared in~\cite[Section~4]{rogers}, \cite[Lemma~4]{vershik_sporyshev_asymptotic_faces_random_polyhedra1992}, and~\cite{steck_orthant_equicorrelated,steck_owen_equicorr_multivar_normal}. A slightly less general version of Lemma~\ref{lemma:proba_gaussian_greater_const}, assuming \(t_1 = \ldots = t_d = 0\), appeared recently in the PhD thesis of~\citet[Theorem~2.B.1, p.~112]{kuchelmeister_phd}, where the same method from~\cite{kabluchko_zaporozhets_absorption} was employed.  \end{remark}

\begin{proof}[Proof of Lemma~\ref{lemma:proba_gaussian_greater_const}]
Note that \( \Sigma \) is (strictly) positive definite and therefore represents a valid covariance matrix.
Indeed, if $r\neq 0$, this follows from Lemma~\ref{lemma:condition_for_ex_of_orthocentric_cone}, while for $r=0$ the matrix $\Sigma$ is diagonal with positive diagonal entries.
The equality of~\eqref{eq:orthant_probab_for_g_d_1} and~\eqref{eq:orthant_probab_for_g_d_2} follows by  splitting the integral in~\eqref{eq:orthant_probab_for_g_d_2} into two integrals over the intervals $(-\infty, 0]$ and $(0, \infty)$ and performing a change of variables $x\mapsto -x$ in the first integral.

In the following, we prove the equality between~\eqref{eq:orthant_probab_for_g_d_0} and~\eqref{eq:orthant_probab_for_g_d_1}.   Fix $\lambda_1, \ldots, \lambda_d > 0$, $\eps_1, \ldots ,\eps_d \in \{\pm 1\}$ and $t_1, \ldots, t_d\in \R$.

\underline{Step 1:} Suppose $r>0$.  Let $\xi, \xi_1, \ldots, \xi_d\sim \mathrm{N}(0,1)$ be independent and identically distributed. Consider
$$
\begin{pmatrix}
\tilde{\eta}_1 \\ \vdots \\ \tilde{\eta}_d
\end{pmatrix}
:=
\begin{pmatrix}
\frac{1}{\sqrt{\lambda_1}} & 0 & \cdots & 0 & -\sqrt{r}\\
0 & \frac{1}{\sqrt{\lambda_2}} & \cdots & 0 & -\sqrt{r}\\
\vdots & \ddots & \ddots & \vdots & \vdots\\
0 & 0 & \cdots & \frac{1}{\sqrt{\lambda_d}} & -\sqrt{r}
\end{pmatrix}
\cdot
\begin{pmatrix}
\xi_1 \\ \vdots \\ \xi_d \\ \xi
\end{pmatrix}
=
\begin{pmatrix}
\frac{1}{\sqrt{\lambda_1}}\xi_1- \sqrt{r} \xi \\
\vdots \\
\frac{1}{\sqrt{\lambda_d}}\xi_d- \sqrt{r}\xi
\end{pmatrix}.
$$
Then, $(\tilde{\eta}_1, \ldots, \tilde{\eta}_d)$ is a zero-mean Gaussian random vector with covariance matrix $\Sigma$. Hence,
$$
(\eta_1, \ldots, \eta_d)
\stackrel{d}{=}
\left(\frac{1}{\sqrt{\lambda_1}}\xi_1- \sqrt{r} \xi,\ldots, \frac{1}{\sqrt{\lambda_d}}\xi_d- \sqrt{r}\xi \right),
$$
since both sides are multivariate Gaussian distributed and their expectations and covariance matrices agree.
It follows  that
\begin{align}
&\P[\eps_1\eta_1\geq t_1, \ldots, \eps_d\eta_d \geq t_d]
=
\P[\eps_1(-\eta_1)\geq t_1, \ldots, \eps_d(-\eta_d) \geq t_d]\notag\\
=&
\P\left[\forall j \in \{1, \ldots, d\}: \eta_j \leq -t_j \text{ if } \eps_j=1 \text{ and } \eta_j \geq t_j \text{ if } \eps_j=-1 \right]\notag\\
=&
\P\left[\forall j \in \{1, \ldots, d\}: \xi_j \leq (\sqrt{r} \xi - t_j) \sqrt{\lambda_j} \text{ if } \eps_j=1 \text{ and }  \xi_j \geq (\sqrt{r} \xi + t_j) \sqrt{\lambda_j} \text{ if } \eps_j=-1 \right]\notag\\
=&
\frac{1}{\sqrt{2\pi}} \int_{-\infty}^{\infty} \prod_{\substack{j=1\\ \eps_j=1}}^d \P\left[\xi_j \leq (\sqrt{r}x - t_j) \sqrt{\lambda_j}\right] \cdot \prod_{\substack{j=1\\ \eps_j=-1}}^d \P\left[\xi_j \geq (\sqrt{r}x + t_j) \sqrt{\lambda_j}\right] \eee^{-x^2/2}\dd x\notag\\
=&
\frac{1}{\sqrt{2\pi}} \int_{-\infty}^{\infty} \prod_{j=1}^d \Phi\left(( \eps_j \sqrt{r}x - t_j) \sqrt{\lambda_j}\right) \eee^{-x^2/2} \dd x\notag\\
=&
\frac{1}{\sqrt{2 \pi}} \int_0^{\infty} \left( \prod_{j=1}^d \Phi \left((\eps_j \sqrt{r}x -t_j)\sqrt{\lambda_j} \right) + \prod_{j=1}^d \Phi \left((-\eps_j \sqrt{r}x -t_j)\sqrt{\lambda_j} \right) \right) \eee^{-x^2/2} \dd x
\label{eq:proof_g_d_formula_def_F_1} \\
=&: F_1(r). \notag
\end{align}
We first used the fact that \( (\eta_1, \ldots, \eta_d) \overset{d}{=} (-\eta_1, \ldots, -\eta_d) \), and then applied the total probability formula by conditioning on \( \xi = x \). Also,  we applied the identity \( 1 - \Phi(z) = \Phi(-z) \).

\underline{Step 2:} Now we consider the  general case $r>-1/(\lambda_1 +\ldots +\lambda_d)$.
By~\eqref{eq:det_G} we obtain
$$
\det \Sigma
=
\frac{1/r + \lambda_1 + \ldots + \lambda_d}{1/r \cdot \lambda_1 \cdots \lambda_d}
=
\frac{1+r(\lambda_1 + \ldots + \lambda_d)}{\lambda_1 \cdots \lambda_d},
$$
first for $r\neq 0$, then by continuity also for $r=0$. Note that $\det \Sigma \neq 0$.
The inverse of $\Sigma$ is
$$
\Sigma^{-1}
=
\left( -\frac{r \lambda_i \lambda_j}{1+ r(\lambda_1 + \ldots + \lambda_d)} + \delta_{ij}\lambda_i \right)_{i,j=1}^d.
$$
This can be verified by calculating the product of this matrix with  $\Sigma$. In particular, the density of $(\eta_1, \ldots, \eta_d)$ is given by
$$
f(x)
=
f(x_1, \ldots, x_d)
=
\frac{1}{(2 \pi)^{d/2} \sqrt{\det \Sigma}} \exp \left\{ -\frac{1}{2} \lan x, \Sigma^{-1} x \ran \right\}, \qquad x\in \R^d.
$$
The form of $\Sigma^{-1}$ implies
$$
\lan x, \Sigma^{-1} x \ran
=
-\frac{r(\lambda_1 x_1 + \ldots + \lambda_d x_d)^2}{1+ r(\lambda_1 + \ldots + \lambda_d)} + \lambda_1 x_1^2 + \ldots + \lambda_d x_d^2.
$$
This yields
\begin{align}
\P[&\eps_1\eta_1 \geq t_1, \ldots, \eps_d\eta_d \geq t_1]
=
\int_{(t_1, \infty) \times \cdots \times (t_d, \infty)} f(\eps_1x_1,\ldots, \eps_dx_d) \dd x_1 \cdots \dd x_d \notag\\
=&
\frac{1}{(2\pi)^{d/2}} \sqrt{\frac{\lambda_1 \cdots \lambda_d}{1+r(\lambda_1+\ldots+\lambda_d)}} \times \notag\\
\times&
\int_{(t_1, \infty) \times \cdots \times (t_d, \infty)} \exp \left\{ \frac{1}{2} \frac{r}{1+r(\lambda_1 + \ldots + \lambda_d)}\left(\sum_{j=1}^d \eps_j \lambda_j x_j\right)^2 - \frac{1}{2} \sum_{j=1}^d \lambda_j x_j^2\right\} \dd x_1 \cdots \dd x_d
\label{eq:proof_g_d_formula_def_F_2}\\
=&: F_2(r) \notag.
\end{align}

\underline{Step 3:}   Next  we prove that $F_1(r)$, as defined in~\eqref{eq:proof_g_d_formula_def_F_1}, is analytic on the domain
$$
\mathcal D = \left\{r\in \CC: \Re r > - 1/(\lambda_1 +\ldots + \lambda_d) \right\}.
$$
Recall that $\lambda_1, \ldots, \lambda_d > 0$, $\eps_1, \ldots ,\eps_d \in \{\pm 1\}$ and $t_1, \ldots, t_d\in \R$ are fixed.

First, we verify that the integral defining $F_1(r)$ converges for all $r\in \mathcal D$. To this end, we prove: For every compact set $K\subseteq \mathcal D$ there exist constants $C_1(K)>0$ and $c(K)>0$ such that
\begin{equation}\label{eq:estimate_integrand_on_K}
\left| \left( \prod_{j=1}^d \Phi \left((\eps_j \sqrt{r}x -t_j)\sqrt{\lambda_j} \right) + \prod_{j=1}^d \Phi \left((-\eps_j \sqrt{r}x -t_j)\sqrt{\lambda_j} \right) \right) \eee^{-x^2/2} \right|
\leq  C_1(K) \eee^{-c(K)x^2}
\end{equation}
for all $x\geq 0$ and $r\in K$.
The following well-known bound for $\Phi$  can be found (without proof), e.g.,\ in~\cite[Proof of Proposition 1.4(a)]{kabluchko_zaporozhets_absorption}.
\begin{proposition}\label{prop:bound_Phi}
There exists a constant $C$ such that for all $z\in \CC$, we have the estimate
$$
|\Phi(z)| \leq C \max\{1, |\eee^{-z^2/2}|\}
=
C \max\{1, \eee^{-\Re(z^2)/2}\}.
$$
\end{proposition}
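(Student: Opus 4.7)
The plan is to reduce to the case $\Re z \geq 0$ via the functional equation $\Phi(z) + \Phi(-z) = 1$, which follows instantly from the substitution $s \mapsto -s$ in the integral defining $\Phi(z)$ in~\eqref{eq:compl_std_normal_distr}, using that the contour from $0$ to $z$ can be chosen arbitrarily because the integrand $\eee^{-s^2/2}$ is entire.

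For $\Re z \geq 0$, I would rewrite $1 - \Phi(z)$ as a ``tail'' integral along a horizontal contour, which cleanly factors out $\eee^{-z^2/2}$. Closing the rectangle with vertices $0,\; R,\; R + i\,\Im z,\; z$ and letting $R \to +\infty$ — the integrand on the right edge is bounded by $\eee^{-(R^2 - (\Im z)^2)/2}$, so that edge contributes $0$ in the limit — gives, via Cauchy's theorem,
$$
1 - \Phi(z) \;=\; \frac{1}{\sqrt{2\pi}} \int_0^{\infty} \eee^{-(z+u)^2/2}\,\dd u \;=\; \frac{\eee^{-z^2/2}}{\sqrt{2\pi}} \int_0^{\infty} \eee^{-zu}\, \eee^{-u^2/2}\,\dd u.
$$
Using $|\eee^{-zu}| = \eee^{-(\Re z)\, u} \leq 1$ for $\Re z \geq 0$, one obtains
$$
|1 - \Phi(z)| \;\leq\; \frac{|\eee^{-z^2/2}|}{\sqrt{2\pi}} \int_0^{\infty} \eee^{-u^2/2}\,\dd u \;=\; \tfrac12 \, \eee^{-\Re(z^2)/2},
$$
so that the triangle inequality yields $|\Phi(z)| \leq 1 + \tfrac12 \eee^{-\Re(z^2)/2} \leq \tfrac32 \max\{1, \eee^{-\Re(z^2)/2}\}$.

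For $\Re z < 0$, applying the bound just obtained to $-z$ (which has nonnegative real part and satisfies $\Re((-z)^2) = \Re(z^2)$) gives $|\Phi(z)| = |1 - \Phi(-z)| \leq \tfrac12 \eee^{-\Re(z^2)/2}$, which is even stronger. Hence the constant $C = 3/2$ works uniformly on $\CC$. The only point requiring a moment's care is the contour-deformation identity in the second paragraph, but this is a routine rectangle-closing argument exploiting the Gaussian decay of $\eee^{-s^2/2}$ on horizontal strips; I do not anticipate any serious obstacle.
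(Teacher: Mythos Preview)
Your proof is correct and takes a genuinely different route from the paper's. The paper argues by compactness on the disk $|z|<1$ and then invokes the asymptotic expansion of $\Phi$ from an external reference (\cite[Lemma~3.10]{kabluchkoklimovsky}) on two overlapping angular sectors covering $|z|\geq 1$. Your argument, by contrast, is entirely self-contained: the contour-shift identity
\[
1-\Phi(z)=\frac{\eee^{-z^2/2}}{\sqrt{2\pi}}\int_0^\infty \eee^{-zu-u^2/2}\,\dd u
\qquad (\Re z\geq 0)
\]
immediately gives $|1-\Phi(z)|\leq\tfrac12\,\eee^{-\Re(z^2)/2}$, and the functional equation $\Phi(z)+\Phi(-z)=1$ handles $\Re z<0$. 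This yields the explicit constant $C=3/2$, whereas the paper's approach does not track the constant. One cosmetic remark: your ``rectangle'' with vertices $0,\,R,\,R+i\,\Im z,\,z$ is only a genuine rectangle when $\Re z=0$; in general it is a quadrilateral (trapezoid), but Cauchy's theorem of course applies to any closed contour, so the argument is unaffected.
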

\begin{proof}
The equality is satisfied since  $|\eee^{a+ib}| = \eee^a$ for $a,b\in \R$. For the inequality, first note that there exists a constant $C_1$ such that $|\Phi(z)| \leq C_1$ for all $z\in \CC, |z|<1$. For the remaining domain, fix $\eps\in (0,\pi/4)$. From~\cite[Lemma 3.10]{kabluchkoklimovsky}, recall the asymptotic behavior
$$
\Phi(z)
=
\begin{cases}
	-\frac{1+o(1)}{\sqrt{2\pi}z} \eee^{-z^2/2}, & \text{ uniformly, as } |z|\rightarrow \infty, \text{ on } |\arg z|>\frac{\pi}{4}+\eps,\\
	1-\frac{1+o(1)}{\sqrt{2\pi}z} \eee^{-z^2/2}, & \text{ uniformly, as } |z|\rightarrow \infty, \text{ on } |\arg z|<\frac{3\pi}{4}+\eps,
\end{cases}
$$
where $\arg z \in (-\pi, \pi]$.
Hence, there exists $C_2$ such that $|\Phi(z)| \leq C_2 |\eee^{-z^2/2}|$ for all $z\in \CC, |z|\geq 1$ with $|\arg z| > \pi/4 +\eps$. Moreover, there exists $C_3>1$ such that for all $z\in \CC, |z|\geq 1$ with $|\arg z|\leq 3\pi/4-\eps$, we have
$$
|\Phi(z)|
\leq
1+C_3 |\eee^{-z^2/2}|
\leq 2C_3 \max\{1/C_3, |\eee^{-z^2/2}|\}.
$$
Hence, with $C=\max\{C_1,C_2, 2C_3\}$, the inequality is satisfied for all $z\in \CC$.
\end{proof}

Let  $\eps>0$. It follows from Proposition~\ref{prop:bound_Phi} that there exists a constant $C(K,\eps)>0$ such that
\begin{align*}
\left| \Phi \left( (\pm \eps_j \sqrt{r}x-t_j)\sqrt{\lambda_j} \right) \right|
\leq&
C \max \left\{ 1, \eee^{-\frac{1}{2}\left( \Re(r)x^2\lambda_j \mp 2 \lambda_j \eps_j \Re(\sqrt{r}) x t_j + t_j^2 \lambda_j \right)} \right\}\\
\leq&
C(K, \eps) \eee^{\eps x^2} \max \{1, \eee^{-\Re(r)x^2 \lambda_j/2} \}
\end{align*}
for all $r\in K$, $j=1, \ldots, d$ and $x \geq  0$. The second line follows from the estimate  $|\lambda_j \eps_j \Re(\sqrt{r}) x t_j|\leq \max\{\eps x^2, B(K,\eps)\}$ for some constant $B(K,\eps)>0$.  Note that \( \max \{ 1, \eee^{-\Re(r)x^2 \lambda_j/2}\} \) equals \( 1 \) when \( \Re r \geq 0 \), and \( \eee^{-\Re (r) x^2 \lambda_j/2} \) when \( \Re r \leq 0 \). It follows that
\begin{align*}
&\left| \left( \prod_{j=1}^d \Phi \left((\eps_j \sqrt{r}x -t_j)\sqrt{\lambda_j} \right) + \prod_{j=1}^d \Phi \left((-\eps_j \sqrt{r}x -t_j)\sqrt{\lambda_j} \right) \right) \eee^{-x^2/2} \right|\\
\leq
&2C(K, \eps)^d  \eee^{\eps d x^2} \max \left\{ \eee^{-x^2/2}, \exp\left(-\frac{1}{2}((\lambda_1 +\ldots + \lambda_d)\Re(r)+1)x^2 \right) \right\} = 2C(K, \eps)^d \eee^{-cx^2}
\end{align*}
with $c= \inf_{r\in K} \frac{1}{2} \min\{1,(\lambda_1+\ldots +\lambda_d)\Re(r)+1 \} -\eps d$. Note that for every compact  $K\subseteq \mathcal D$ we can choose $\eps>0$ such that $c>0$.   This proves~\eqref{eq:estimate_integrand_on_K}.

An immediate consequence of~\eqref{eq:estimate_integrand_on_K} is that
the integral in~\eqref{eq:proof_g_d_formula_def_F_1} defining $F_1(r)$ converges.
In what follows, we work toward establishing the analyticity of \( F_1(r) \) as a function of $r\in \mathcal D$.
Let us first prove that the integrand in~\eqref{eq:proof_g_d_formula_def_F_1} is analytic in $r\in \CC$. The function $\Phi$ can be written as an everywhere absolutely convergent power series~\eqref{eq:taylor_series_Phi}. Hence, for fixed $x>0$, the expression
$$
\prod_{j=1}^d \Phi \left((\eps_j z x -t_j)\sqrt{\lambda_j} \right)
=
\sum_{n=0}^{\infty} a_n z^n
$$
can also be written as an absolutely convergent power series in $z$. Consequently,
$$
\prod_{j=1}^d \Phi \left((\eps_j \sqrt{r}x -t_j)\sqrt{\lambda_j} \right) + \prod_{j=1}^d \Phi \left((-\eps_j \sqrt{r}x -t_j)\sqrt{\lambda_j} \right)
=
\sum_{n=0}^{\infty} 2a_{2n} r^n
$$
is an analytic function in $r\in \CC$.

Next, we prove that $F_1(r)$ is continuous on $\mathcal D$. Let $r, r_1, r_2,\ldots\in \mathcal D$ be such that $\lim_{n\to\infty}r_n =r$. Then, by~\eqref{eq:estimate_integrand_on_K}, there exists $C_1>0$ and $c>0$  such that
$$
\left| \left( \prod_{j=1}^d \Phi \left((\eps_j \sqrt{r_n}x -t_j)\sqrt{\lambda_j} \right) + \prod_{j=1}^d \Phi \left((-\eps_j \sqrt{r_n}x -t_j)\sqrt{\lambda_j} \right) \right) \eee^{-x^2/2} \right|
\leq
C_1\eee^{-cx^2}
$$
for all $n \in \N$ and all $x\geq 0$.
Since the integral of \( C\eee^{-c x^2} \) over \( (0, \infty) \) is finite, the dominated convergence theorem implies that \( \lim_{n \to \infty} F_1(r_n) = F_1(r) \), thereby establishing the continuity of \( F_1(r) \) on $\mathcal D$.

To prove that \( F_1(r) \) is analytic on $\mathcal D$, we apply Morera's theorem. It is sufficient to verify that the contour  integral of \( F_1(r) \) over every triangular path vanishes. Take $z_1,z_2,z_3\in \mathcal D$, i.e.\  $\Re z_i >-1/(\lambda_1 +\ldots + \lambda_d), i=1,2,3$. Let $\lan z_1, z_2, z_3\ran$ be the triangular contour $\alpha_1 \oplus \alpha_2 \oplus \alpha_3$ with $z_1, z_2, z_3$ as vertices. That is,
\begin{align*}
	\alpha = \alpha_1 \oplus \alpha_2 \oplus \alpha_3 = \begin{cases}
		\alpha_1(t) & \text{for } 0\leq t \leq 1,\\
		\alpha_2(t) & \text{for } 1\leq t \leq 2,\\
		\alpha_3(t) & \text{for } 2\leq t \leq 3,
	\end{cases}
\end{align*}
with
\begin{align*}
	\alpha_1(t) = z_1 + t(z_2-z_1) \quad \text{for }  0 \leqslant t \leqslant 1,\\
	\alpha_2(t) = z_2 + (t-1)(z_3-z_2) \quad \text{for } 1\leqslant t \leqslant 2,\\
	\alpha_3(t) = z_3 + (t-2)(z_1-z_3) \quad \text{for } 2\leqslant t \leqslant 3.
\end{align*}

By~\eqref{eq:estimate_integrand_on_K}, there exist $C_1>0$ and $c>0$ such that for all $r\in \alpha_1$,
\begin{align*}
&\int_0^1 \int_{0}^{\infty}\left| \left( \prod_{j=1}^d \Phi \left((\eps_j \sqrt{r}x -t_j)\sqrt{\lambda_j} \right) + \prod_{j=1}^d \Phi \left((-\eps_j \sqrt{r}x -t_j)\sqrt{\lambda_j} \right)\right) \eee^{-x^2/2} \alpha_1'(t) \right| \dd x \dd t\\
\leq&
|z_2-z_1| C_1 \int_0^{\infty} \eee^{-cx^2} \dd x < \infty.
\end{align*}
Analogous bounds hold for $\alpha_2$ and $\alpha_3$. Hence, Fubini's theorem can be applied and it follows that
\begin{multline*}
\oint_{\langle z_1,z_2,z_3 \rangle} F_1(r) \dd r
\\
=
\frac 1 {\sqrt {2\pi}} \int_{0}^{\infty} \oint_{\langle z_1,z_2,z_3 \rangle}  \left(\prod_{j=1}^d \Phi \left((\eps_j \sqrt{r}x -t_j)\sqrt{\lambda_j} \right) + \prod_{j=1}^d \Phi \left((-\eps_j \sqrt{r}x -t_j)\sqrt{\lambda_j} \right)\right) \eee^{-x^2/2}  \dd r \dd x
=
0,
\end{multline*}
where the inner integral vanishes by Cauchy's theorem -- recall that the integrand is an analytic function in $r$.
Morera's theorem is now applicable and entails that $F_1(r)$ is an analytic function on the half-plane $\Re r > -\frac{1}{\lambda_1 +\ldots + \lambda_d}$.

\underline{Step 4:} With the same strategy as in Step 3, we prove that $F_2(r)$, as defined in~\eqref{eq:proof_g_d_formula_def_F_2}, is analytic on the domain
$$
\mathcal D = \left\{r\in \CC: \Re r > -1/(\lambda_1 +\ldots + \lambda_d) \right\}.
$$
On this domain, the term in the square root, namely $\frac{\lambda_1 \cdots \lambda_d}{1+r(\lambda_1+\ldots+\lambda_d)}$, has a positive real part, hence the square root of this term is analytic.
It remains to show that
$$
\widetilde F_2(r):= \int_{(t_1, \infty) \times \cdots \times (t_d, \infty)} \exp \left\{ \frac{1}{2} \frac{r}{1+r(\lambda_1 + \ldots + \lambda_d)}\left(\sum_{j=1}^d \eps_j \lambda_j x_j\right)^2 - \frac{1}{2} \sum_{j=1}^d \lambda_j x_j^2\right\} \dd x_1 \cdots \dd x_d
$$
is analytic on $\mathcal D$.
Let $x\in \R^d$. Jensen's inequality entails
\begin{align}
\left( \sum_{j=1}^d \eps_j \lambda_j x_j \right)^2
=&
(\lambda_1 + \ldots +\lambda_d)^2 \left( \frac{\sum_{j=1}^d \eps_j \lambda_j x_j }{\lambda_1+\ldots+\lambda_d} \right)^2 \notag\\
\leq&
(\lambda_1+\ldots+\lambda_d)^2\frac{\sum_{j=1}^d \lambda_j (\eps_j x_j)^2 }{\lambda_1+\ldots+\lambda_d}
=
(\lambda_1+\ldots+\lambda_d) \sum_{j=1}^d \lambda_j x_j^2.\label{eq:proof_g_d_jensen_for_F2}
\end{align}
Using $|\eee^z|= \eee^{\Re z}$, we can conclude for all $r\in \CC$ with $\Re r > -1/(\lambda_1 + \ldots + \lambda_d)$ that
$$
\left| \exp \left\{ \frac{1}{2} \frac{r}{1+r(\lambda_1 + \ldots + \lambda_d)}\left(\sum_{j=1}^d \eps_j \lambda_j x_j\right)^2 - \frac{1}{2} \sum_{j=1}^d \lambda_j x_j^2\right\} \right|
\leq
\exp \left\{ -c \sum_{j=1}^d \lambda_j x_j^2 \right\}
$$
with
$$
c
=
\frac{1}{2} \min\left\{1, 1-\Re \frac{r(\lambda_1+\ldots+\lambda_d)}{1+r(\lambda_1 + \ldots + \lambda_d)} \right\}.
$$
Note that we used~\eqref{eq:proof_g_d_jensen_for_F2} if $\Re\frac{r}{1+r(\lambda_1 + \ldots + \lambda_d)}>0$, otherwise we used the trivial estimate $(\sum_{j=1}^d \eps_j \lambda_j x_j)^2\geq 0$.  It holds $c>0$, since from $\Re r>-\frac{1}{\lambda_1+\ldots+\lambda_d}$ it follows that
\begin{align*}
1-\Re \frac{r(\lambda_1+\ldots+\lambda_d)}{1+r(\lambda_1 + \ldots + \lambda_d)}
&=
\Re \frac{1}{1+r(\lambda_1 + \ldots + \lambda_d)}\\
&=
\frac{1+(\lambda_1 + \ldots +\lambda_d)\Re(r)}{(1+(\lambda_1 + \ldots +\lambda_d)\Re(r))^2 + ((\lambda_1 + \ldots + \lambda_d)\Im(r))^2} > 0.
\end{align*}
Thus, the integral in the definition of $\widetilde F_2(r)$ is finite. Next we show that $\widetilde F_2(r)$ is continuous on $\mathcal D$.
To this end, let $r,r_1,r_2,\ldots \in \mathcal D$  and such that $\lim_{n\to\infty}r_n = r$. Then, for all $n\in \N$ and $x\in \R^d$ it holds that
$$
\left| \exp \left\{ \frac{1}{2} \frac{r_n}{1+r_n(\lambda_1 + \ldots + \lambda_d)}\left(\sum_{j=1}^d \eps_j \lambda_j x_j\right)^2 - \frac{1}{2} \sum_{j=1}^d \lambda_j x_j^2\right\} \right|\leq \exp \left\{-c \sum_{j=1}^d \lambda_j x_j^2\right\},
$$
with
$$
c
=
\inf_n \frac{1}{2}\min\left\{1, 1-\Re \frac{r_n (\lambda_1+\ldots+\lambda_d)}{1+r_n(\lambda_1 + \ldots + \lambda_d)}\right\}>0.
$$
Therefore, the dominated convergence theorem applied to~\eqref{eq:proof_g_d_formula_def_F_2} yields $\lim_{n\to\infty}\widetilde F_2(r_n) = \widetilde F_2(r)$, thereby showing that $\widetilde F_2(r)$ is a continuous function in $r\in \mathcal D$.

Next we prove that the function $\widetilde F_2(r)$ is analytic on $\mathcal D$. By Morera's theorem, we have to verify that the contour integral of $\widetilde{F}_2$ over every triangular path in $\mathcal D$ vanishes.
Let $z_1,z_2,z_3\in \mathcal D$, that is $\Re z_i >-1/(\lambda_1 +\ldots + \lambda_d), i=1,2,3$.  As already defined above, let $\lan z_1, z_2, z_3\ran$ be the triangular contour  $\alpha_1 \oplus \alpha_2 \oplus \alpha_3$ with $z_1, z_2, z_3$ as vertices.
Then,
\begin{align*}
\int_0^1 \int_{\bigtimes_{j=1}^d (t_j, \infty)} &\left| \exp \left\{ \frac{1}{2} \frac{\alpha_1(t)}{1+\alpha_1(t)(\lambda_1 + \ldots + \lambda_d)}\left(\sum_{j=1}^d \eps_j \lambda_j x_j\right)^2 - \frac{1}{2} \sum_{j=1}^d \lambda_j x_j^2\right\} \alpha_1'(t) \right|\dd x_1\cdots \dd x_d \dd t\\
\leq&
|z_2-z_1| \int_{(t_1,\infty)\times \ldots \times (t_d, \infty)} \exp \left\{-c \sum_{j=1}^d \lambda_j x_j^2\right\}\dd x_1\cdots \dd x_d
<
\infty,
\end{align*}
where
$$
c=\inf_{r\in [z_1,z_2]} \frac{1}{2}\min \left\{ 1, 1-\Re \frac{r(\lambda_1+\ldots+\lambda_d)}{1+r(\lambda_1 + \ldots + \lambda_d)}\right\}>0.
$$
Analogous estimates hold for $\alpha_2$ and $\alpha_3$. Hence,  Fubini's theorem can be applied and it follows that
\begin{multline*}
 \oint_{\langle z_1, z_2, z_3\rangle} \widetilde F_2(r) \dd r
\\
\int_{\bigtimes_{j=1}^d (t_j, \infty)} \oint_{\langle z_1, z_2, z_3\rangle}  \exp \left\{ \frac{1}{2} \frac{r}{1+r(\lambda_1 + \ldots + \lambda_d)}\left(\sum_{j=1}^d \lambda_j x_j\right)^2 - \frac{1}{2} \sum_{j=1}^d \lambda_j x_j^2\right\}  \dd r \dd x_1\cdots \dd x_d
=
0,
\end{multline*}
where the inner contour integral vanishes by Cauchy's theorem which is applicable since the integrand is an analytic function in $r$.  Hence, Morera's theorem yields that $\widetilde F_2(r)$ is an analytic function on $\mathcal D$. Thus, $F_2$ is analytic on $\mathcal D$.

\underline{Step 5:} By Steps 1 and 2, $F_1$ and $F_2$ agree on $(0, \infty)$. The identity theorem for analytic functions, which is applicable by Steps 3 and 4, now entails that $F_1$ and $F_2$ agree on the domain $\Re r > -1/(\lambda_1 +\ldots + \lambda_d)$.
\end{proof}

The next corollary extends slightly~\cite[Corollary~2.B.1]{kuchelmeister_phd}, where the case $t_1=\ldots= t_d=0$ has been considered.
\begin{corollary}
Let $Z_1\sim \mathrm{N}(0, 1/\lambda_1),\ldots, Z_d\sim \mathrm{N}(0, 1/\lambda_d)$ be independent centered normal random variables with inverse variances $\lambda_1>0,\ldots, \lambda_d >0$. Consider their convex combination $Z:= (\lambda_1 Z_1 + \ldots + \lambda_d Z_d)/(\lambda_1+\ldots + \lambda_d)$.  Then, for every $t_1,\ldots, t_d\in \R$ and  $\eps_1,\ldots,\eps_d\in \{\pm 1\}$, we have
\begin{equation*}
\P\left[ \eps_1 (Z_1 - Z) \geq t_1, \ldots, \eps_d(Z_d - Z) \geq t_d\right]
=
\frac{1}{\sqrt{2 \pi}} \int_{-\infty}^{\infty}  \prod_{j=1}^d \Phi \left(\left( \frac{\eps_j \sqrt{-1} x}{\sqrt{\lambda_1 + \ldots + \lambda_d}} -t_j\right)\sqrt{\lambda_j} \right) \eee^{-x^2/2} \dd x.
\end{equation*}
\end{corollary}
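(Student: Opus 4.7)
My approach is to identify $(Z_1 - Z, \ldots, Z_d - Z)$ as the limiting Gaussian vector in the boundary case of Lemma~\ref{lemma:proba_gaussian_greater_const}, and then to extend the integral identity of that lemma to the boundary by a continuity argument. Throughout, set $\Lambda := \lambda_1 + \ldots + \lambda_d$ for brevity.

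First, I would verify the covariance structure. Independence of $Z_1, \ldots, Z_d$ with $\Var Z_i = 1/\lambda_i$ yields $\Var Z = 1/\Lambda$ and $\Cov(Z_i, Z) = 1/\Lambda$. Therefore
$$
\Cov(Z_i - Z, Z_j - Z) = \frac{\delta_{ij}}{\lambda_i} - \frac{1}{\Lambda}, \qquad i,j \in \{1, \ldots, d\},
$$
which coincides with the matrix in~\eqref{eq:covariance_matrix_r} evaluated at the boundary value $r_0 := -1/\Lambda$. With the convention $\sqrt{r_0} = i/\sqrt{\Lambda} = \sqrt{-1}/\sqrt{\Lambda}$, formally substituting $r = r_0$ into the integral representation~\eqref{eq:orthant_probab_for_g_d_2} reproduces exactly the right-hand side of the corollary.

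Second, I would pass to the limit $r \searrow r_0$ from within the admissible half-plane $\mathcal{D}$ in the identity of Lemma~\ref{lemma:proba_gaussian_greater_const}. Let $\eta^{(r)} = (\eta_1^{(r)}, \ldots, \eta_d^{(r)})$ denote the Gaussian vector from the lemma with parameter $r$. Since the covariance matrices converge entrywise to the covariance of $(Z_1 - Z, \ldots, Z_d - Z)$, we obtain the weak convergence $\eta^{(r)} \Rightarrow (Z_1 - Z, \ldots, Z_d - Z)$. The limit law is supported on the hyperplane $\{y \in \R^d : \sum_{j=1}^d \lambda_j y_j = 0\}$ and possesses a continuous density there, so the orthant $\{y : \eps_j y_j \geq t_j \text{ for all } j\}$ is a continuity set of the limit law; by the portmanteau theorem, the left-hand side of~\eqref{eq:orthant_probab_for_g_d_0} converges to $\P[\eps_1 (Z_1 - Z) \geq t_1, \ldots, \eps_d (Z_d - Z) \geq t_d]$.

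The main obstacle is the matching limit on the integral side, because the exponential decay bound used in Step~3 of the proof of Lemma~\ref{lemma:proba_gaussian_greater_const} degenerates at $r_0$. I would sharpen the estimate using the asymptotic $|\Phi(iy)| \sim e^{y^2/2}/(\sqrt{2\pi}|y|)$ from \cite{kabluchkoklimovsky}, which implies that at $r = r_0$ the exponential growth of $\prod_{j=1}^d |\Phi(\cdot)|$ cancels exactly with the Gaussian weight $e^{-x^2/2}$, leaving polynomial decay of order $|x|^{-d}$ as $|x| \to \infty$. For $d \geq 2$ this provides a uniform dominating function integrable on $\R$ in a one-sided neighbourhood of $r_0$, and dominated convergence then transfers the identity to $r = r_0$, yielding the corollary.
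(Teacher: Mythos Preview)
Your approach is essentially the same as the paper's: compute the covariance of $(Z_1-Z,\ldots,Z_d-Z)$, recognise it as the boundary case $r=-1/(\lambda_1+\ldots+\lambda_d)$ of Lemma~\ref{lemma:proba_gaussian_greater_const}, and pass to the limit. The paper's own proof is just two sentences and explicitly states ``We omit the justification of continuity''; you have supplied exactly that omitted justification, so in this sense your argument is more complete than the paper's.

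Your continuity argument is sound. On the probability side, the portmanteau reasoning is correct: for $d\geq 2$ the coordinate hyperplanes $\{y_j=\eps_j t_j\}$ are transverse to the support hyperplane $\{\sum_j\lambda_j y_j=0\}$, so the orthant boundary is null for the limit law. On the integral side, your key observation---that at $r=r_0$ the exponential growth of the product exactly cancels with $\eee^{-x^2/2}$, leaving decay $\sim|x|^{-d}$ from the $1/|z|$ factors in the asymptotics of $\Phi$---is correct and does yield a uniform integrable majorant on a one-sided real neighbourhood of $r_0$ for $d\geq 2$. The one caveat you already flag is that this bound fails for $d=1$; in that case $Z_1-Z\equiv 0$ and the statement degenerates, so nothing is lost.
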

\begin{proof}
The jointly Gaussian random variables $\eta_i := Z_i -Z$ satisfy $\E [\eta_i \eta_j] = \frac {\delta_{ij}} {\lambda_i}  - \frac 1 {\lambda_1 + \ldots + \lambda_d}$.  Thus, we apply Lemma~\ref{lemma:proba_gaussian_greater_const} with $r\downarrow  -\frac 1 {\lambda_1 + \ldots + \lambda_d}$. We omit the justification of continuity.
\end{proof}

\subsection{The case when \texorpdfstring{$\lambda_1>0, \ldots, \lambda_d>0$}{lambda\_1>0,...,lambda\_d >0}}
Now we are in position to derive an explicit formula for $\bg_d(\lambda_0; \lambda_1, \ldots, \lambda_d; \eps_1, \ldots, \eps_d)$ in the case when $\lambda_1>0,\ldots, \lambda_d >0$.
\begin{theorem}\label{theo:formula_for_bg_d(lambda_i,eps_i)}
Let $d\geq 0$, $\lambda_1, \ldots, \lambda_d>0$, $\eps_1, \ldots, \eps_d \in \{\pm 1\}$ and either $\lambda_0>0$ or $\lambda_0<-\lambda_1-\ldots -\lambda_d$. Then, we have
\begin{align}
\label{eq:bg_d(lambda_i,eps_i)_formula}
\bg_d(\lambda_0; \lambda_1, \ldots, \lambda_d; \eps_1, \ldots, \eps_d)
&=
\frac{1}{\sqrt{2 \pi}} \int_0^{\infty} \left( \prod_{j=1}^d \Phi \left(\eps_j \sqrt{\frac{\lambda_j}{\lambda_0}}x \right) + \prod_{j=1}^d \Phi \left(-\eps_j \sqrt{\frac{\lambda_j}{\lambda_0}}x \right) \right) \eee^{-x^2/2} \dd x\\
&=
\frac{1}{\sqrt{2 \pi}} \int_{-\infty}^{\infty} \prod_{j=1}^d \Phi \left(\eps_j \sqrt{\frac{\lambda_j}{\lambda_0}}x \right) \eee^{-x^2/2} \dd x.
\end{align}
Here, $\Phi:\CC\to\CC$ denotes the distribution function of the standard normal distribution, as defined in~\eqref{eq:compl_std_normal_distr}. C
\end{theorem}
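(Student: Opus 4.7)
The plan is to obtain the theorem as a direct specialization of Lemma~\ref{lemma:proba_gaussian_greater_const}, which was established immediately before. By Definition~\ref{def:g_d_function}, $\bg_d(\lambda_0;\lambda_1,\ldots,\lambda_d;\eps_1,\ldots,\eps_d) = \P[\eps_1\eta_1 < 0,\ldots,\eps_d\eta_d < 0]$, where $(\eta_1,\ldots,\eta_d)$ is centred Gaussian with covariance matrix $(\tfrac{1}{\lambda_0} + \tfrac{\delta_{ij}}{\lambda_i})_{i,j=1}^d$. This is precisely the form~\eqref{eq:covariance_matrix_r} with the choice $r = 1/\lambda_0$. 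The admissibility condition of Lemma~\ref{lemma:proba_gaussian_greater_const}, namely $r > -1/(\lambda_1+\ldots+\lambda_d)$, translates into exactly the hypothesis on $\lambda_0$: either $\lambda_0 > 0$, or $1/\lambda_0 \in (-1/(\lambda_1+\ldots+\lambda_d),0)$, i.e.\ $\lambda_0 < -(\lambda_1+\ldots+\lambda_d)$.

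The second step is the sign bookkeeping. Since the Gaussian vector is continuous, replacing the strict inequality $\eps_j\eta_j < 0$ by $(-\eps_j)\eta_j \geq 0$ does not change the probability. Applying Lemma~\ref{lemma:proba_gaussian_greater_const} with $t_1=\ldots=t_d=0$ and with each $\eps_j$ replaced by $-\eps_j$ therefore yields
\[
\bg_d(\lambda_0;\lambda_1,\ldots,\lambda_d;\eps_1,\ldots,\eps_d) = \frac{1}{\sqrt{2\pi}}\int_{-\infty}^{\infty} \prod_{j=1}^d \Phi\!\left(-\eps_j\sqrt{r}\sqrt{\lambda_j}\,x\right)\eee^{-x^2/2}\,\dd x,
\]
with $\sqrt{r}\sqrt{\lambda_j} = \sqrt{\lambda_j/\lambda_0}$. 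Performing the change of variables $x \mapsto -x$ (which preserves the density $\eee^{-x^2/2}$) flips the sign inside $\Phi$ and produces the displayed identity of the theorem. Splitting the resulting integral at $0$ and again applying $x\mapsto -x$ on $(-\infty,0]$ gives the equivalent form~\eqref{eq:bg_d(lambda_i,eps_i)_formula} as a sum of two products.

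The only genuinely subtle point is the interpretation of $\sqrt{\lambda_j/\lambda_0}$ when $\lambda_0<0$. Here the convention $\sqrt{\lambda_0} = i\sqrt{-\lambda_0}$ is used, so the argument of $\Phi$ becomes purely imaginary, and $\Phi$ is to be interpreted via its entire power series~\eqref{eq:taylor_series_Phi}. This issue, however, is already resolved inside Lemma~\ref{lemma:proba_gaussian_greater_const}: the symmetric integrand in~\eqref{eq:orthant_probab_for_g_d_1} is even in $\sqrt{r}$ (the Taylor coefficients of $\Phi(\sqrt{r}\cdot)+\Phi(-\sqrt{r}\cdot)$ involve only even powers of $\sqrt{r}$), so the sign choice is irrelevant, and the lemma's analyticity statement on $\{\Re r > -1/(\lambda_1+\ldots+\lambda_d)\}$ guarantees that the formula obtained on $r>0$ by the probabilistic argument remains valid on the full complex domain, in particular for negative $r$ in the admissible range. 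Consequently no further work is needed beyond invoking Lemma~\ref{lemma:proba_gaussian_greater_const}; the main (and only) technical ingredients have already been handled there. I therefore do not expect any substantial obstacle in this proof, its role being merely to translate the orthant probability statement of the lemma into the $\bg_d$-notation fixed in Definition~\ref{def:g_d_function}.
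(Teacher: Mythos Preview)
Your proposal is correct and follows essentially the same route as the paper: specialize Lemma~\ref{lemma:proba_gaussian_greater_const} with $t_1=\ldots=t_d=0$ and $r=1/\lambda_0$, observing that the admissibility condition on $r$ matches the hypothesis on $\lambda_0$. The paper's proof is in fact terser than yours (it omits the explicit sign bookkeeping and the $x\mapsto -x$ substitution you spell out), and you should also add a one-line remark for the base case $d=0$, where $\bg_0(\lambda_0)=1$ by convention and the right-hand side equals $1$ as the integral of the standard Gaussian density.
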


\begin{proof}
For \( d = 0 \), we have \( \bg_0 = 1 \) by definition, and the right-hand side also equals 1 by the identity \( \frac{1}{\sqrt{2\pi}} \int_{-\infty}^{\infty} \eee^{-x^2/2} \dd x = 1 \) (with the convention that the empty product is 1). For \( d \geq 1 \), the claim follows from Definition~\ref{def:g_d_function}, in combination with Lemma~\ref{lemma:proba_gaussian_greater_const}, by setting \( t_1 = \ldots = t_d = 0 \) and \( r = 1/\lambda_0 \).
\end{proof}
As a corollary, we obtain an explicit formula for the solid angles of orthocentric cones with parameters \( \lambda_1, \ldots, \lambda_d > 0 \).
\begin{corollary}
Let $v_1, \ldots, v_d \in \R^d$, $\lambda_1, \ldots, \lambda_d > 0$, $\eps_1, \ldots, \eps_d \in \{\pm 1\}$ and either $\lambda_0>0$ or $\lambda_0<-\lambda_1-\ldots -\lambda_d$, be such that $\lan v_i, v_j \ran = 1/\lambda_0 + \delta_{ij}/\lambda_i$ for all $1\leq i,j \leq d$. Then, we have
$$
\alpha(\pos(\eps_1v_1, \ldots, \eps_dv_d))
=
\frac{1}{\sqrt{2\pi}} \int_{-\infty}^{\infty} \prod_{j=1}^d \Phi \left( \eps_j \sqrt{\frac{\lambda_j}{-\lambda_0-\lambda_1-\ldots -\lambda_d}} x \right) \eee^{-x^2/2} \dd x.
$$
\end{corollary}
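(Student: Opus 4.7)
The corollary is essentially a two-step consequence of the preceding machinery, so my plan is to simply chain together Theorem~\ref{theo:angle_pos(eps_i_v_i)_via_g_d(lambda_i,eps_i)} and Theorem~\ref{theo:formula_for_bg_d(lambda_i,eps_i)} and then check that the hypotheses of the second theorem are met with the shifted first argument coming out of the first.

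First, I would invoke Theorem~\ref{theo:angle_pos(eps_i_v_i)_via_g_d(lambda_i,eps_i)}, which directly converts the solid angle in question into an evaluation of the function $\bg_d$:
\[
\alpha(\pos(\eps_1 v_1, \ldots, \eps_d v_d)) = \bg_d\bigl(-\lambda_0 - \lambda_1 - \ldots - \lambda_d;\, \lambda_1, \ldots, \lambda_d;\, \eps_1 \sgn(\lambda_1), \ldots, \eps_d \sgn(\lambda_d)\bigr).
\]
Since $\lambda_1, \ldots, \lambda_d > 0$ by assumption, each $\sgn(\lambda_j) = +1$, and the sign entries simplify to $\eps_1, \ldots, \eps_d$.

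Next, I would apply Theorem~\ref{theo:formula_for_bg_d(lambda_i,eps_i)} with its ``$\lambda_0$'' replaced by $\widetilde\lambda_0 := -\lambda_0 - \lambda_1 - \ldots - \lambda_d$. For this step to be legitimate, I must verify the admissibility condition: $\widetilde\lambda_0 > 0$ or $\widetilde\lambda_0 < -\lambda_1 - \ldots - \lambda_d$. The two hypotheses of the corollary correspond exactly to these two cases. Indeed, if $\lambda_0 > 0$, then $\widetilde\lambda_0 = -\lambda_0 - \lambda_1 - \ldots - \lambda_d < -\lambda_1 - \ldots - \lambda_d$, so the second branch of the theorem's hypothesis holds; if instead $\lambda_0 < -\lambda_1 - \ldots - \lambda_d$, then $\widetilde\lambda_0 > 0$, and the first branch holds.

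Having verified applicability, Theorem~\ref{theo:formula_for_bg_d(lambda_i,eps_i)} immediately yields
\[
\bg_d(\widetilde\lambda_0;\, \lambda_1, \ldots, \lambda_d;\, \eps_1, \ldots, \eps_d) = \frac{1}{\sqrt{2\pi}} \int_{-\infty}^{\infty} \prod_{j=1}^d \Phi\!\left(\eps_j \sqrt{\frac{\lambda_j}{\widetilde\lambda_0}}\, x\right) \eee^{-x^2/2} \dd x,
\]
and substituting $\widetilde\lambda_0 = -\lambda_0 - \lambda_1 - \ldots - \lambda_d$ completes the proof. There is essentially no obstacle here: the entire argument is bookkeeping, with the only nontrivial check being that the sign conditions on $\lambda_0$ in the corollary correctly translate into the admissibility condition for $\widetilde\lambda_0$ required by Theorem~\ref{theo:formula_for_bg_d(lambda_i,eps_i)}.
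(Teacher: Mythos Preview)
Your proposal is correct and follows exactly the same approach as the paper, which simply says to substitute the expression for $\bg_d$ from Theorem~\ref{theo:formula_for_bg_d(lambda_i,eps_i)} into Theorem~\ref{theo:angle_pos(eps_i_v_i)_via_g_d(lambda_i,eps_i)}. Your version is actually more explicit than the paper's one-line proof, in that you spell out why the sign entries reduce to $\eps_j$ and verify that the admissibility condition on $\widetilde\lambda_0$ is satisfied in each of the two cases of the hypothesis.
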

\begin{proof}
Substitute the expression for \( \bg_d \) from Theorem~\ref{theo:formula_for_bg_d(lambda_i,eps_i)} into Theorem~\ref{theo:angle_pos(eps_i_v_i)_via_g_d(lambda_i,eps_i)}.
\end{proof}

\subsection{The case when \texorpdfstring{$\lambda_k<0$}{lambda\_k<0} for some \texorpdfstring{$k\in\{1,\ldots, d\}$}{k in \{1,...,d\}}}
The next theorem provides a formula for $\bg_d$ in the case when $\lambda_k<0$ for some $k\in\{1,\ldots, d\}$.
\begin{theorem}\label{theo:formula_for_bg_d(lambda_i,eps_i)_negative}
Let $d \in \N$ and $k \in \{1, \ldots, d\}$. Suppose that $\lambda_0, \ldots, \lambda_d$  satisfy $\lambda_i > 0$ for all $i \in \{0, \ldots, d\}\backslash\{k\}$, and $\lambda_k < 0$, with the additional condition that $\lambda_0 + \ldots + \lambda_d < 0$. Then, for all $\eps_1, \ldots, \eps_d \in \{\pm 1\}$, we have
\begin{multline*}
\bg_d(\lambda_0; \lambda_1,\ldots, \lambda_d;\eps_1, \ldots, \eps_d)
=
\sqrt{\frac{\lambda_0}{2\pi}} \Biggl(
\int_0^{\infty} \eee^{-\frac{\lambda_0 y^2}{2}} \prod_{\substack{j=1\\j\neq k}}^d \Phi \left(\eps_k \eps_j \sqrt{\lambda_j} y \right) \dd y
\\
+2 \int_0^{\infty} \eee^{\frac{\lambda_0 y^2}{2}} \Phi\left(-\sqrt{-\lambda_k} y \right) \Im \Bigg( \prod_{\substack{j=1\\j\neq k}}^d \Phi \left(i \eps_k \eps_j \sqrt{\lambda_j} y \Bigg) \right) \dd y
\Biggr).
\end{multline*}
\end{theorem}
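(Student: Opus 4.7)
The plan is to reduce the $d$-dimensional orthant probability to a sum of one-dimensional integrals by conditioning on the component $\eta_k$ that carries the negative parameter $\lambda_k$, and then applying Lemma~\ref{lemma:proba_gaussian_greater_const} with a purely imaginary value of $\sqrt r$.

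First I would condition on $\eta_k$. Because $\E[\eta_j\eta_k]/\Var(\eta_k)=\lambda_k/(\lambda_0+\lambda_k)$ is independent of $j\neq k$, the conditional law of $(\eta_j)_{j\neq k}$ given $\eta_k=y$ is Gaussian with common mean $\mu y$ (where $\mu:=\lambda_k/(\lambda_0+\lambda_k)$) and covariance $\tilde\Sigma_{ij}=r+\delta_{ij}/\lambda_i$, $r:=1/(\lambda_0+\lambda_k)<0$. The hypothesis $\lambda_0+\ldots+\lambda_d<0$ is exactly the condition $r>-1/\sum_{j\neq k}\lambda_j$ required by Lemma~\ref{lemma:proba_gaussian_greater_const}, which I then apply with $\sqrt r = i/\sqrt{-(\lambda_0+\lambda_k)}$. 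After substituting $\eta_k=-\eps_k u$, using the symmetry $\overline{F(z)}=F(\bar z)$ of the entire function $F(z):=\prod_{j\neq k}\Phi(\eps_j\sqrt{\lambda_j}z)$, and rescaling, this yields
\[
\bg_d \;=\; \frac{1}{\pi}\,\Re\!\int_0^\infty\!\!\int_0^\infty e^{-U^2/2-T^2/2}\,F(\alpha U + i\beta T)\,dU\,dT,
\]
with $\alpha:=\eps_k\mu\sigma_k$, $\sigma_k^2:=1/\lambda_0+1/\lambda_k$, and $\beta:=1/\sqrt{-(\lambda_0+\lambda_k)}$. A direct computation gives the key algebraic identity $\alpha^2-\beta^2=1/\lambda_0$, which will ultimately produce the prefactor $\sqrt{\lambda_0/(2\pi)}$ in the final formula.

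The main obstacle is to reduce this double integral to the two single integrals of the theorem. The plan is to integrate $T$ first: extending the range to $(-\infty,\infty)$ via the conjugate symmetry turns the inner integral into $\sqrt{2\pi}\,\E_Z[F(\alpha U+i\beta Z)]$ for $Z\sim N(0,1)$, and because $F$ is entire and $\beta^2\sum_{j\neq k}\lambda_j<1$ (again equivalent to the standing hypothesis), the integrand decays exponentially on horizontal contours, so Cauchy's theorem permits shifting the $Z$-contour by $+i\alpha U/\beta$. This shift converts $F(\alpha U+i\beta Z)$ into $F(i\beta Z)$ at the cost of the extra factor $e^{\alpha^2 U^2/(2\beta^2)}$ times an oscillatory $e^{-iZ'\alpha U/\beta}$; combined with $e^{-U^2/2}$, the total $U^2$-exponent becomes $(\alpha^2-\beta^2)U^2/(2\beta^2)=U^2/(2\lambda_0\beta^2)$ by the key identity. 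Collecting the contributions, the real-axis endpoint of the contour shift produces the first single integral $\sqrt{\lambda_0/(2\pi)}\int_0^\infty e^{-\lambda_0 y^2/2}\prod_{j\neq k}\Phi(\eps_k\eps_j\sqrt{\lambda_j}y)\,dy$ after the change of variable $y\propto U$, while the residual shifted contour produces the $\Im$-product at purely imaginary arguments; integrating that piece against the outer Gaussian in $U$ restricted to the half-line $U>0$ yields the factor $\Phi(-\sqrt{-\lambda_k}y)$ (arising as a Gaussian convolution of the half-line indicator) together with $e^{+\lambda_0 y^2/2}$, producing exactly the second single integral. The most delicate aspect is justifying the exchange of integration order in the presence of the large exponential $e^{\alpha^2 U^2/(2\beta^2)}$ introduced by the shift, which requires treating the successive integrals as conditionally convergent and regularising; the $\eps_k$-dependence, however, is easy to track because $\alpha$ is the only quantity carrying $\eps_k$, so the final formula depends on $\eps_k$ only through the products $\eps_k\eps_j$ inside $\Phi$.
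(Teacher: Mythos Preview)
Your setup is correct and indeed closely parallels the paper's strategy: conditioning on a distinguished Gaussian variable and then invoking Lemma~\ref{lemma:proba_gaussian_greater_const} with a negative $r$ to obtain a double integral of the form
\[
\frac{1}{\pi}\,\Re\int_0^\infty\!\!\int_0^\infty e^{-U^2/2-T^2/2}\,F(\alpha U+i\beta T)\,dU\,dT,
\qquad
\alpha^2-\beta^2=\tfrac{1}{\lambda_0}.
\]
The paper arrives at a structurally identical double integral (its Lemma~\ref{lemma:angle_difficult_orthoc_simplex}), although it takes a slightly different route there: rather than conditioning on $\eta_k$, it first realises $\bg_d$ as the solid angle of a concrete orthocentric cone via Theorem~\ref{theo:angle_pos(eps_i_v_i)_via_g_d(lambda_i,eps_i)}, and then conditions the standard Gaussian vector $\xi$ on the linear functional $\langle\xi,H\rangle$. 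Your direct approach is cleaner and avoids the geometric detour.

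The gap is in your reduction of the double integral to the two single integrals. A contour shift of the full $T$-integral over $\R$ by $+i\alpha U/\beta$ is a single operation: it does not produce an ``endpoint'' term and a ``residual'' term, so it cannot by itself split the answer into two pieces. Moreover, as you note, it leaves you with the growing factor $e^{(\alpha^2-\beta^2)U^2/(2\beta^2)}$; the inner integral must then be shown to decay at least this fast in $U$, which you do not establish, and ``conditionally convergent and regularising'' is not a proof. The paper's device is different and avoids these issues: it substitutes the complex variable $y = x - (a{+}1)z\,i\sqrt{p}$, which converts the half-line $z$-integral into a contour integral from $x$ to $x-i\infty$, and then deforms this contour via Cauchy's theorem into the concatenation $[x,0]\cup[0,-i\infty]$ (the closing segment at height $-iB$ vanishes as $B\to\infty$). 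The piece along $[0,-i\infty]$ yields the first single integral after a Fubini step, and the piece along $[0,x]$ yields the second after a further Fubini step and an explicit antiderivative formula~\eqref{eq:indefin_integral_pf_intern_angle}. In your variables, the analogous move is to treat the $U$-integral (the half-line one) as a complex contour rather than shifting the full-line $T$-integral; this is what naturally produces two terms and keeps all intermediate integrals absolutely convergent.
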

In order to prove the aforementioned theorem, we first compute a quantity slightly more general than the internal angle of a simplex $S=[\nu H, e_1/\tau_1, \ldots, e_d/\tau_d]$, as in Example~\ref{exam:orthocenter_outside_alternative}, at the vertex $e_d/\tau_d$.
\begin{lemma}\label{lemma:angle_difficult_orthoc_simplex}
Let $\nu \in (0,1)$ and $a:=\nu/(1-\nu) \in (0,\infty)$. For $d \geq 2$, let $\tau_1,\ldots, \tau_d>0$, $p=\tau_1^2+\ldots +\tau_d^2$, $H=(\tau_1 e_1 + \ldots + \tau_d e_d)/p$ and $\eps_1, \ldots, \eps_{d-1},\eps \in \{\pm 1\}$. The solid angle of the cone
$$
T
=
\pos\left(\eps \left( \nu H-\frac{e_d}{\tau_d} \right), \eps_1 \left( \frac{e_1}{\tau_1}-\frac{e_d}{\tau_d} \right), \ldots,  \eps_{d-1} \left( \frac{e_{d-1}}{\tau_{d-1}}-\frac{e_d}{\tau_d} \right) \right),
$$
is given by
\begin{align*}
\alpha(T)=\frac{1}{\sqrt{2 \pi a (a +2)}} \Biggl(&
\int_0^{\infty} \eee^{-\frac{y^2}{2a(a+2)}} \prod_{j=1}^{d-1} \Phi \left(-\eps \eps_j \frac{\tau_j}{\sqrt{p}} y \right) \dd y\\
&+2 \int_0^{\infty} \eee^{\frac{y^2}{2a(a+2)}} \Phi \left( -\frac{a+1}{\sqrt{a(a+2)}}y \right) \Im \left( \prod_{j=1}^{d-1} \Phi\left(-i \eps \eps_j \frac{\tau_j}{\sqrt{p}}y \right)  \right)\dd y
\Biggr).
\end{align*}
\end{lemma}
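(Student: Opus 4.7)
The plan is to compute $\alpha(T) = \P[\xi \in T]$ for standard Gaussian $\xi \in \R^d$ via an orthogonal decomposition of $\R^d$ along the distinguished direction $H$. Setting $h := H/\|H\| = \sqrt{p}\,H$ and $L := h^\perp$, one observes that each $V_i := e_i/\tau_i - e_d/\tau_d$ with $i \geq 1$ lies in $L$ (since $\langle e_j/\tau_j, h\rangle = 1/\sqrt{p}$ for every $j$), whereas $V_0 := \nu H - e_d/\tau_d$ decomposes as $V_0 = -\frac{1-\nu}{\sqrt{p}}\,h + V_0^\perp$ with $V_0^\perp := H - e_d/\tau_d \in L$. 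Writing $\xi = \eta h + \xi^\perp$ with $\eta \sim N(0,1)$ and $\xi^\perp$ an independent standard Gaussian on $L$, the unique decomposition $\xi = c_0 \eps V_0 + \sum_{i \geq 1} c_i \eps_i V_i$ has its $h$-component forcing $c_0 = -(a+1)\eps\eta\sqrt{p}$, so $\xi \in T$ reduces to $\eps\eta \leq 0$ together with the shifted-cone membership $\xi^\perp + (a+1)\eta\sqrt{p}\,V_0^\perp \in K$, where $K := \pos(\eps_1 V_1, \ldots, \eps_{d-1} V_{d-1})$ is the orthocentric cone $C_{d-1}(\tau_d^2; \tau_1^2, \ldots, \tau_{d-1}^2; \eps_1, \ldots, \eps_{d-1})$ with all positive parameters.

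Second, I would convert this shifted membership into Gaussian thresholds using outer normals of $K$: by Theorem~\ref{theo:orthocentric_cone_dual}, the normalized dual vectors $\tilde w_i := w_i/\tau_i^2$ satisfy $\langle \tilde w_i, \tilde w_j\rangle = -1/p + \delta_{ij}/\tau_i^2$, and an explicit expansion of $w_i$ in the basis $V_1, \ldots, V_{d-1}$ gives $\langle V_0^\perp, \tilde w_i\rangle = -1/p$ for every $i$. Substituting $\eta = -\eps s$ with $s \geq 0$, the event $\xi \in T$ becomes $\eps_i \tilde Y_i \geq \eps\eps_i s(a+1)/\sqrt{p}$ for $i = 1, \ldots, d-1$, where $\tilde Y_i := -\langle \xi^\perp, \tilde w_i\rangle$. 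Since $-1/p > -1/(\tau_1^2 + \ldots + \tau_{d-1}^2)$, Lemma~\ref{lemma:proba_gaussian_greater_const} applies with $r = -1/p$ (so $\sqrt{r} = i/\sqrt{p}$) and $\lambda_j = \tau_j^2$; the two product terms in its output are complex conjugates (via $\overline{\Phi(z)} = \Phi(\bar z)$) and combine to $2\,\Re\!\prod_j \Phi(\cdot)$. Integrating against the density of $\eta$ yields the intermediate formula
\begin{equation*}
\alpha(T) = \frac{1}{\pi} \int_0^\infty \int_0^\infty e^{-(s^2+x^2)/2}\,\Re \prod_{j=1}^{d-1} \Phi\!\left(\frac{\eps_j \tau_j (ix - \eps s(a+1))}{\sqrt{p}}\right) dx\,ds.
\end{equation*}

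The main obstacle will be to reduce this double integral to the two single integrals of the claim. The clue is that the target kernels involve the variance $a(a+2)$, and $|ix - \eps s(a+1)|^2 = x^2 + s^2(a+1)^2$; thus an ``elliptic'' radial coordinate $y$ with $x = y\sin\phi$, $s(a+1) = y\cos\phi$ for $\phi \in (0,\pi/2)$ rewrites the complex argument as $-\eps y e^{-i\eps\phi}$, the Gaussian weight as $\exp\!\bigl(-y^2[1 + a(a+2)\sin^2\phi]/(2(a+1)^2)\bigr)$, and the area element as $(y/(a+1))\,dy\,d\phi$. I would carry out the $\phi$-integral first, using the analyticity of $\Phi$ in its complex argument to deform the contour traced by $-\eps y e^{-i\eps\phi}$: one segment of the deformed contour lies on the imaginary axis and produces the first single integral, with decaying kernel $e^{-y^2/(2a(a+2))}$ and real arguments $-\eps\eps_j\tau_j y/\sqrt{p}$, whereas the complementary horizontal segment yields the second integral, with growing kernel $e^{y^2/(2a(a+2))}$, the tail factor $\Phi(-(a+1)y/\sqrt{a(a+2)})$ arising from a Gaussian integral along the displaced contour, and the imaginary part of $\prod_j \Phi(-i\eps\eps_j\tau_j y/\sqrt{p})$. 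Matching the Jacobian $y/(a+1)$ accumulated from this reparametrization against the global $1/\pi$ produces the prefactor $1/\sqrt{2\pi a(a+2)}$.
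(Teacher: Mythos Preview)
Your derivation up to the intermediate double integral is correct and, up to notation, coincides with the paper's argument: the orthogonal decomposition along $h$, the identification of the shifted-cone condition, the computation $\langle V_0^\perp,\tilde w_i\rangle=-1/p$, and the application of Lemma~\ref{lemma:proba_gaussian_greater_const} with $r=-1/p$ are all exactly what the paper does (the paper conditions on $\langle\xi,H\rangle=h$ and uses that the conditional law of $(\xi_j/\tau_j-h)_j$ is Gaussian with covariance $(-1/p+\delta_{ij}/\tau_i^2)$, which is your $(\tilde Y_i)$). After the identification $s=-\eps\eta$, your formula
\[
\alpha(T)=\frac{1}{\pi}\int_0^\infty\!\!\int_0^\infty e^{-(s^2+x^2)/2}\,\Re\prod_{j=1}^{d-1}\Phi\!\left(\frac{\eps_j\tau_j(ix-\eps(a+1)s)}{\sqrt p}\right)dx\,ds
\]
is precisely the paper's double integral.

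The gap is in the reduction of this double integral to the two single integrals. Your elliptic-polar substitution $x=y\sin\phi$, $(a+1)s=y\cos\phi$ does make the $\Phi$-arguments depend only on $ye^{-i\eps\phi}$, but the Gaussian weight becomes $\exp\bigl(-y^2[1+a(a+2)\sin^2\phi]/(2(a+1)^2)\bigr)$, which for fixed $y$ is \emph{not} an analytic function of the contour variable $e^{-i\eps\phi}$ (it involves $\sin^2\phi$, hence both $e^{i\phi}$ and $e^{-i\phi}$). So ``deforming the contour traced by $-\eps y e^{-i\eps\phi}$'' is not well-defined as stated: the full integrand is not holomorphic along that arc, and neither the decaying kernel $e^{-y^2/(2a(a+2))}$ nor the growing kernel $e^{+y^2/(2a(a+2))}$ is a boundary value of your weight along any such deformation. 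The variance $a(a+2)$ does not come from a contour endpoint; it comes from completing the square in a Gaussian integral.

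The paper proceeds differently at this stage. Starting from the same double integral (with $x\in\R$, $s\ge 0$), it performs a \emph{linear} complex substitution in the inner variable so that the $\Phi$-arguments depend on a single new variable $y$, while the exponential becomes $\exp\bigl(\tfrac{(x-y)^2}{2(a+1)^2}-\tfrac{x^2}{2}\bigr)$. The inner contour, now running from $x$ to $x-i\infty$, is deformed via Cauchy's theorem to $[x,0]\cup[0,-i\infty]$; Fubini then allows the $x$-integral to be carried out first, and the explicit antiderivative
\[
\int e^{\frac{(x-y)^2}{2(a+1)^2}-\frac{x^2}{2}}\,dx=\frac{\sqrt{2\pi}(a+1)}{\sqrt{a(a+2)}}\,e^{\frac{y^2}{2a(a+2)}}\,\Phi\!\left(\frac{a(a+2)x+y}{(a+1)\sqrt{a(a+2)}}\right)+\text{const}
\]
is what generates both the kernels $e^{\pm y^2/(2a(a+2))}$ and the tail factor $\Phi\bigl(-\tfrac{(a+1)y}{\sqrt{a(a+2)}}\bigr)$. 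If you wish to repair your argument, replace the polar substitution by this shear-type substitution.
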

\begin{proof}
Let us prove the formula in case $\eps=1$. Once this is established, the case $\eps=-1$ follows with the observation $\alpha(\pos(v_1, \ldots, v_d)) = \alpha(\pos(-v_1, \ldots, -v_d))$.
For $\eps=1$ we have
$$
T^{\circ}
=
\pos\left(H, \eps_1 \left(-a H - \frac{e_1}{\tau_1}\right), \ldots, \eps_{d-1} \left(-a H - \frac{e_{d-1}}{\tau_{d-1}}\right) \right).
$$
Indeed, the scalar product of the $i$-th generator of $T$ with the $j$-th generator of $T^\circ$ is negative if $i=j$ and zero otherwise. Alternatively, $-aH$ is the orthocenter of $S$ and therefore the lines spanned by the vectors $H,-a H - \frac{e_1}{\tau_1},\ldots, -a H - \frac{e_{d-1}}{\tau_{d-1}}$ are orthogonal to the facets of $S$ containing $\frac{e_d} {\tau_d}$ -- this implies the above representation of $T^\circ$.
With a standard Gaussian distributed random vector $\xi = (\xi_1, \ldots, \xi_d)$, the solid angle of $T$ is given by
\begin{align*}
\alpha(T)
&=
\P\left[\xi \in T \right]
=
\P\left[\lan \xi, x \ran \leq 0 \, \forall x\in T^{\circ}\right]\\
&=
\P\left[\lan \xi,H \ran \leq 0, \eps_j \left\lan -a H-\frac{e_j}{\tau_j}, \xi \right\ran\leq 0\, \forall j=1,\ldots,d-1 \right]\\
&=
\P\left[\lan \xi,H \ran \leq 0, \eps_j \left\lan a H+\frac{e_j}{\tau_j}, \xi \right\ran \geq 0\; \forall j=1,\ldots,d-1 \right]\\
&=
\int_{-\infty}^0 \P\left[ \eps_j\left(\frac{\xi_j}{\tau_j}-h \right) \geq -\eps_j (a+1)h \; \forall j=1,\ldots, d-1 \Big| \lan \xi,H \ran = h \right] \P_{\lan \xi,H \ran} (\dd h).
\end{align*}
For further simplification of this expression, first note that $\lan \xi, H \ran \sim \mathrm{N}(0,1/p)$. Moreover by~\cite[Theorem 3.3.4]{tong_book}, for fixed $h \in (-\infty, 0)$, the conditional distribution
$$
\left(\frac{\xi_1}{\tau_1}, \ldots, \frac{\xi_d}{\tau_d} \, \Big|\, \lan \xi, H \ran = h\right)
$$
is a multivariate Gaussian distribution with mean
$$
\mu
=
\begin{pmatrix}
\E[\xi_1/\tau_1]\\
\vdots\\
\E[\xi_d/\tau_d]
\end{pmatrix}
\cdot
\begin{pmatrix}
\Cov\left(\xi_1/\tau_1, \lan\xi,H\ran \right)\\
\vdots\\
\Cov\left(\xi_d/\tau_d, \lan\xi,H\ran \right)
\end{pmatrix}
\cdot
\left(\Var\left( \lan\xi,H\ran \right) \right)^{-1}
\cdot
\left(h-\E\left[ \lan\xi,H\ran \right] \right)
=
\begin{pmatrix}
h\\
\vdots\\
h
\end{pmatrix},
$$
and covariance matrix
\begin{align*}
\Sigma
&:=
\left(\Cov\left(\frac{\xi_i}{\tau_i}, \frac{\xi_j}{\tau_j} \right) \right)_{i,j=1}^d
-
\begin{pmatrix}
	\Cov\left(\frac{\xi_1}{\tau_1}, \lan\xi,H\ran \right)\\
	\vdots\\
	\Cov\left(\frac{\xi_d}{\tau_d}, \lan\xi,H\ran \right)
\end{pmatrix}
\left( \Var \lan\xi,H\ran \right)^{-1}
\begin{pmatrix}
	\Cov\left(\frac{\xi_1}{\tau_1}, \lan\xi,H\ran \right)\\
	\vdots\\
	\Cov\left(\frac{\xi_d}{\tau_d}, \lan\xi,H\ran \right)
\end{pmatrix}^T\\
&=
\left( -\frac{1}{p}+\frac{\delta_{ij}}{\tau_i^2} \right)_{i,j=1}^d.
\end{align*}

Let $\eta = (\eta_1, \ldots, \eta_d)$ be a Gaussian random vector with mean zero and covariance matrix $\Sigma$. It follows that
$$
\alpha(T)
=
\int_{-\infty}^0 \P\left[\eps_1 \eta_1 \geq -\eps_1(a+1)h, \ldots, \eps_{d-1} \eta_{d-1} \geq -\eps_{d-1} (a+1) h \right] \cdot \frac{\sqrt{p}}{\sqrt{2\pi}} \eee^{-h^2p/2} \dd h.
$$
Next, applying Lemma~\ref{lemma:proba_gaussian_greater_const}, making the substitution $z = -h$, and using Fubini's theorem, we obtain
\begin{align*}
\alpha(T)
&=
\int_{-\infty}^0 \frac{1}{\sqrt{2\pi}} \int_{-\infty}^{\infty} \prod_{j=1}^{d-1} \Phi\left( \left(\eps_j\sqrt{-\frac{1}{p}} x+\eps_j (a+1)h\right)\tau_j \right) \eee^{-x^2/2} \dd x \cdot \frac{\sqrt{p}}{\sqrt{2\pi}} \eee^{-h^2p/2} \dd h\\
&=
\frac{\sqrt{p}}{2\pi} \int_{-\infty}^{\infty} \eee^{-x^2/2} \int_0^{\infty} \eee^{-z^2p/2} \prod_{j=1}^{d-1} \Phi\left( \eps_j \left(\sqrt{-\frac{1}{p}} x -(a+1)z\right)\tau_j \right) \dd z \dd x.
\end{align*}
In the inner integral, substitute $y=x-(a+1)z\sqrt{-p}$, which yields $z=(x-y)/((a+1)\sqrt{-p})$, and use the convention $\sqrt{-p}=i\sqrt{p}$. This entails
$$
\alpha(T)
=
\frac{i}{2\pi (a+1)} \int_{-\infty}^{\infty} \int_x^{x-i\infty} \underbrace{\eee^{\frac{(x-y)^2}{2(a+1)^2}-\frac{x^2}{2}} \prod_{j=1}^{d-1} \Phi \left( -\eps_j i \frac{\tau_j}{\sqrt{p}} y \right)}_{=:f(x,y)} \dd y \dd x.
$$
Fix some $x\in \R$.  By Cauchy's theorem, the integral of $f(x,y)$ over the segment $[x, x-i B]$ can be written as the sum of integrals over $[x,0], [0, -iB], [-iB, x-iB]$, for all $B>0$. We now show that the last integral converges  to $0$ as $B\to+\infty$. Take some $C>|x|$.  For all $y\in \CC$ satisfying $|\Re y|\leq C$ and $\Im y\leq 0$ we have
$$
\left| \eee^{\frac{(x-y)^2}{2 (a+1)^2}-\frac{x^2}{2}} \right|
\leq
\mathrm{const}\cdot \left| \eee^{\frac{y^2}{2(a+1)^2}} \right|
\leq
\mathrm{const} \cdot \eee^{-\frac{\Im(y)^2}{2(a+1)^2}},
$$
and moreover
$$
\Re \left( \left( -\eps_j i \frac{\tau_j}{\sqrt{p}} y \right)^2 \right)
=
\frac{\tau_j^2}{p} \left( \Im(y)^2-\Re(y)^2 \right) \geq -\frac{\tau_j^2}{p} C.
$$
By Proposition~\ref{prop:bound_Phi}, we obtain the bound $|f(x,y)| \leq \mathrm{const} \cdot \eee^{-\Im(y)^2/(2(a+1))}$. It follows that  $\int_{-iB}^{x-iB} f(x,y) \dd y \to 0$ as $B\to +\infty$ and hence
\begin{align*}
\alpha(T)
&=
\frac{i}{2\pi (a+1)} \int_{-\infty}^{\infty} \int_x^{x-i\infty} f(x,y) \dd y \dd x\\
&=
\frac{i}{2\pi (a+1)} \left(\underbrace{\int_{-\infty}^{\infty} \int_0^{-i\infty} f(x,y) \dd y \dd x}_{=:I_1} - \underbrace{\int_{-\infty}^{\infty} \int_0^x f(x,y) \dd y \dd x}_{=:I_2} \right).
\end{align*}
For the computation of both $I_1$ and $I_2$, for all $y\in \CC$, we need the indefinite integral
\begin{equation}\label{eq:indefin_integral_pf_intern_angle}
\int \eee^{\frac{(x-y)^2}{2 (a+1)^2}-\frac{x^2}{2}} \dd x
=
\frac{\sqrt{2\pi}(a+1)}{\sqrt{a(a+2)}} \eee^{\frac{y^2}{2a(a+2)}} \Phi \left( \frac{a(a+2)x+y}{\sqrt{a(a+2)}(a+1)} \right) + \text{const},
\end{equation}
which  can be verified by differentiating the right-hand side. First, consider the integral $I_1$. Substituting $z=iy$ in the inner integral and applying Fubini's theorem, we obtain
\begin{align*}
I_1
&=
-i \int_0^{\infty} \int_{-\infty}^{\infty} \eee^{\frac{(x+iz)^2}{2 (a+1)^2}-\frac{x^2}{2}} \dd x \cdot \prod_{j=1}^{d-1} \Phi\left(- \eps_j \frac{\tau_j}{\sqrt{p}} z \right) \dd z\\
&=
-i \int_0^{\infty} \left[ \frac{\sqrt{2\pi}(a+1)}{\sqrt{a(a+2)}} \eee^{\frac{(-iz)^2}{2a(a+2)}} \Phi \left( \frac{a(a+2)x-iz}{\sqrt{a(a+2)}(a+1)} \right) \right]_{x=-\infty}^{x=\infty} \prod_{j=1}^{d-1} \Phi\left(- \eps_j \frac{\tau_j}{\sqrt{p}} z \right) \dd z\\
&=
-i \int_0^{\infty} \frac{\sqrt{2\pi}(a+1)}{\sqrt{a(a+2)}} \eee^{-\frac{z^2}{2a(a+2)}} (1-0) \cdot \prod_{j=1}^{d-1} \Phi\left(- \eps_j \frac{\tau_j}{\sqrt{p}} z \right) \dd z.
\end{align*}
The second equality follows with~\eqref{eq:indefin_integral_pf_intern_angle} and the limits in the final equality are a consequence of the known asymptotics of the function $\Phi$; see, e.g.~\cite[Eq.\ (3.7)]{kabluchkoklimovsky}.

The integral $I_2$ can be written as
\begin{align*}
I_2
&=
\int_{-\infty}^{\infty}  \int_{-\infty}^{\infty} \sgn(x) \ind\{0<|y|<|x|, \sgn(y)=\sgn(x)\} f(x,y) \dd y \dd x\\
&=
-\int_{-\infty}^0 \int_{-\infty}^y f(x,y) \dd x \dd y + \int_0^{\infty} \int_y^{\infty} f(x,y) \dd x \dd y,
\end{align*}
where Fubini's theorem is used in the second integral, and the outer integral is split into two parts. To evaluate the inner integrals, substitute $f(x, y)$ and apply~\eqref{eq:indefin_integral_pf_intern_angle} in each summand. This yields
\begin{align*}
I_2 =&
-\int_{-\infty}^0 \frac{\sqrt{2\pi}(a+1)}{\sqrt{a(a+2)}} \eee^{\frac{y^2}{2a(a+2)}} \left( \Phi\left(\frac{a(a+2)y+y}{\sqrt{a(a+2)}(a+1)} \right) -0 \right) \prod_{j=1}^{d-1} \Phi \left( -\eps_j i \frac{\tau_j}{\sqrt{p}} y \right) \dd y\\
&+ \int_0^{\infty} \frac{\sqrt{2\pi}(a+1)}{\sqrt{a(a+2)}} \eee^{\frac{y^2}{2a(a+2)}} \left(1- \Phi\left( \frac{a(a+2)y+y}{\sqrt{a(a+2)}(a+1)} \right) \right) \prod_{j=1}^{d-1} \Phi \left( -\eps_j i \frac{\tau_j}{\sqrt{p}} y \right) \dd y.
\end{align*}
After substituting $y\mapsto -y$ in the first integral and using $1-\Phi(z)=\Phi(-z)$ in the second integral, we arrive at
$$
I_2
=
\frac{\sqrt{2\pi}(a+1)}{\sqrt{a(a+2)}} \int_0^{\infty} \eee^{\frac{y^2}{2a(a+2)}} \Phi\left( -\frac{a(a+2)y+y}{\sqrt{a(a+2)}(a+1)} \right) \left[ \prod_{j=1}^{d-1} \Phi \left( -\eps_j i \frac{\tau_j}{\sqrt{p}} y \right) - \prod_{j=1}^{d-1} \Phi \left( \eps_j i \frac{\tau_j}{\sqrt{p}} y \right) \right] \dd y.
$$
The difference of products in the above term can be written as
\begin{multline*}
\prod_{j=1}^{d-1} \Phi \left( -\eps_j i \frac{\tau_j}{\sqrt{p}} y \right) - \prod_{j=1}^{d-1} \Phi \left( \eps_j i \frac{\tau_j}{\sqrt{p}} y \right)\\
=
\prod_{j=1}^{d-1} \Phi \left(- \eps_j i \frac{\tau_j}{\sqrt{p}} y \right) - \overline{\prod_{j=1}^{d-1} \Phi \left(- \eps_j i \frac{\tau_j}{\sqrt{p}} y \right)}
=
2i \Im \left(\prod_{j=1}^{d-1} \Phi \left(- \eps_j i \frac{\tau_j}{\sqrt{p}} y \right) \right).
\end{multline*}
Plugging the formulas for $I_1$ and $I_2$ in
$
\alpha(T)
=
\frac{i}{2\pi (a+1)} \left(I_1 - I_2 \right)
$
completes the proof.
\end{proof}

The idea of the proof of Theorem~\ref{theo:formula_for_bg_d(lambda_i,eps_i)_negative} is to choose the parameters in Lemma~\ref{lemma:angle_difficult_orthoc_simplex} in a way that $T$ is isometric to a cone whose solid angle is given by $\bg_d(\lambda_0;\lambda_1, \ldots, \lambda_d;\eps_1, \ldots, \eps_d)$.
\begin{proof}[Proof of Theorem~\ref{theo:formula_for_bg_d(lambda_i,eps_i)_negative}]
In the case $d=1$, we have $\bg_d(\lambda_0;\lambda_1;\eps_1) = \P[\eps_1 \eta_1\leq 0] = 1/2$, where $\eta_1$ is some zero-mean Gaussian distributed random variable. The right-hand side in Theorem~\ref{theo:formula_for_bg_d(lambda_i,eps_i)_negative} is also $1/2$, by the identity $\frac{1}{\sqrt{2\pi}} \int_{-\infty}^{\infty} \eee^{-x^2/2} \dd x = 1$ (the empty product is $1$).

Now, let $d\geq 2$.
We may assume $k=d$. Otherwise, interchange $\lambda_k$ with $\lambda_d$ and $\eps_k$ with $\eps_d$.
Consider $\lambda_0, \lambda_1, \ldots, \lambda_{d-1}>0$, $\lambda_d<-(\lambda_0+\lambda_1+\ldots +\lambda_{d-1})$ and $\eps_1, \ldots, \eps_d\in \{\pm 1\}$. By Theorem~\ref{theo:angle_pos(eps_i_v_i)_via_g_d(lambda_i,eps_i)}, we have
$$
\bg_d(\lambda_0;\lambda_1, \ldots, \lambda_d;\eps_1, \ldots, \eps_d)
=
\alpha \left( C_d(-(\lambda_0+ \lambda_1 + \ldots + \lambda_d);\lambda_1, \ldots, \lambda_d;\eps_1, \ldots, \eps_{d-1}, -\eps_d) \right).
$$
Next, we select parameters in Lemma~\ref{lemma:angle_difficult_orthoc_simplex} such that the cone
$T$ appearing there is isometric to the cone on the right-hand side.
Set $\tau_i = \sqrt{\lambda_i}$ for $i=1,\ldots, d-1$, $\tau_d = \sqrt{-(\lambda_0 + \lambda_1 + \ldots + \lambda_d)}$, $\nu=1-\sqrt{\frac{\lambda_0}{-\lambda_d}}$ and $\eps = -\eps_d$ and consider the cone
$$
T
=
\pos\left(\eps_1 \left( \frac{e_1}{\tau_1}-\frac{e_d}{\tau_d} \right), \ldots,  \eps_{d-1} \left( \frac{e_{d-1}}{\tau_{d-1}}-\frac{e_d}{\tau_d} \right), \eps \left( \nu H-\frac{e_d}{\tau_d} \right) \right),
$$
with $H=\frac{\tau_1 e_1 + \ldots + \tau_d e_d}{p}$, where $p=\tau_1^2 + \ldots + \tau_d^2$. We observe that,  up to isometry,
\begin{align*}
T
&=
C_d \left(\tau_d^2; \tau_1^2, \ldots, \tau_{d-1}^2, -\frac{p}{\nu(2-\nu)}; \eps_1, \ldots, \eps_{d-1}, \eps \right)\\
&=
C_d(-(\lambda_0+ \lambda_1 + \ldots + \lambda_d);\lambda_1, \ldots, \lambda_d;\eps_1, \ldots, \eps_{d-1}, -\eps_d).
\end{align*}
For the first step, compute the scalar products of the generators of $T$. For the second equality, recall the definitions of $\tau_i$, $\nu$, and $\eps$ to verify that the parameters of the orthocentric cones coincide. The solid angle of $T$ has been determined in  Lemma~\ref{lemma:angle_difficult_orthoc_simplex}. In view of  $p=-\lambda_0-\lambda_d$ and $a:=\nu/(1-\nu) = \sqrt{-\lambda_d/\lambda_0}-1$, this yields
\begin{align*}
\bg_d(\lambda_0;\lambda_1, \ldots, \lambda_d; \eps_1, \ldots, \eps_d)
=
\alpha(T)\\
=
\sqrt{\frac{\lambda_0}{2 \pi (-\lambda_0-\lambda_d)}} \Biggl(& \int_0^{\infty} \eee^{-\frac{\lambda_0 y^2}{2(-\lambda_0-\lambda_d)}} \prod_{j=1}^{d-1} \Phi \left(\eps_d \eps_j \sqrt{\frac{\lambda_j}{-\lambda_0-\lambda_d}} y \right) \dd y\\
+2\int_0^{\infty} \eee^{\frac{\lambda_0 y^2}{2(-\lambda_0-\lambda_d)}}& \Phi\left( -\sqrt{\frac{-\lambda_d}{-\lambda_0-\lambda_d}}y \right) \Im \left( \prod_{j=1}^{d-1} \Phi \left(i \eps_d \eps_j \sqrt{\frac{\lambda_j}{-\lambda_0-\lambda_d}} y \right) \right) \dd y
\Biggr).
\end{align*}
Finally, the substitution $y \mapsto y/\sqrt{-\lambda_0-\lambda_d}$ completes the proof.
\end{proof}

\section{Angles of rectangular simplices}\label{sec:right_angled_simplices}

\subsection{Tangent and normal cones of rectangular simplices}
\begin{proposition}[Tangent and normal cones of rectangular simplices]\label{prop:degenerate_simpl_cones}
For $d\geq 2$ and $\tau_1, \ldots, \tau_d>0$ consider the simplex $S=[0,e_1/\tau_1, \ldots, e_d/\tau_d] \subseteq \R^d$. The tangent and normal cones of $S$ at its faces are given as follows.
\begin{enumerate}
\item $T(S,S)=\R^d$ and $N(S,S)=\{0\}$.
\item For $k\in \{0, \ldots, d-1\}$ consider $F=[0,e_1/\tau_1, \ldots, e_k/\tau_k]$. Then, $N(F,S)$ is a $(d-k)$-dimensional orthant and $T(F,S)$ is the direct orthogonal sum of the lineality space of $F$ and a $(d-k)$-dimensional orthant.
\item For $k\in \{0, \ldots, d-1\}$ consider $F=[e_1/\tau_1, \ldots, e_{k+1}/\tau_{k+1}]$. Then, $N(F,S)=\pos(w_{k+1}, \ldots, w_d)$ for some vectors $w_{k+1}, \ldots, w_d$ with Gram matrix
$$
\left( \lan w_i, w_j \ran \right)_{i,j=k+1}^d
=
\begin{pmatrix}
\tau_1^2 + \ldots + \tau_d^2 & -\tau_{k+2} & -\tau_{k+3} & \cdots & -\tau_d\\
-\tau_{k+2} & 1 & 0 & \cdots & 0\\
\vdots & 0 & \ddots & \ddots & \vdots\\
\vdots& \vdots & \ddots & \ddots & 0\\
-\tau_d & 0 & \cdots & 0 & 1
\end{pmatrix}.
$$
The tangent cone $T(F,S)$ is the direct orthogonal sum of the lineality space of $F$ and a pointed cone $\pos(v_{k+1}, \ldots, v_d)$ for some vectors $v_{k+1}, \ldots, v_d$ with Gram matrix
\begin{equation}\label{eq:gram_matrix_tangent_cone_degenerate}
\left( \lan v_i, v_j \ran \right)_{i,j=k+1}^d
=
\begin{pmatrix}
1 & \tau_{k+2} & \tau_{k+3} &\cdots & \tau_d\\
\tau_{k+2} & \tau_1^2 + \ldots + \tau_{k+1}^2 + \tau_{k+2}^2 & \tau_{k+2} \tau_{k+3} &\cdots &\tau_{k+2}\tau_d\\
\vdots & \tau_{k+3}\tau_{k+2} & \ddots & \ddots & \vdots\\
\vdots & \vdots & \ddots & \ddots & \tau_{d-1}\tau_d\\
\tau_d & \tau_d\tau_{k+2} & \cdots & \tau_d\tau_{d-1} & \tau_1^2 + \ldots + \tau_{k+1}^2 + \tau_d^2
\end{pmatrix}
=:M.
\end{equation}
\end{enumerate}
\end{proposition}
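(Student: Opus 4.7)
My plan is to describe each normal cone $N(F,S)$ as the positive hull of the outer normals of the facets of $S$ that contain $F$, and then to obtain the tangent cones either by polarity (for parts~(a), (b), and the normal-cone half of (c)) or directly by projecting edge directions at an interior point of $F$ onto the orthogonal complement of the lineality space of $F$ (for the tangent cone in (c)). The simplex $S$ has $d+1$ facets, which I list once and for all: the facet $H_0 := [e_1/\tau_1, \ldots, e_d/\tau_d]$ opposite the vertex $0$ lies in $\{x : \tau_1 x_1 + \ldots + \tau_d x_d = 1\}$ with outer normal $(\tau_1, \ldots, \tau_d)^\top$; and for $j \in \{1, \ldots, d\}$ the facet $H_j := S \cap \{x_j = 0\}$ opposite $e_j/\tau_j$ has outer normal $-e_j$. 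Every proper face of $S$ is the intersection of the facets that contain it, and its normal cone is the positive hull of the corresponding outer normals.

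Part~(a) is immediate from the definitions. For part~(b), the face $F = [0, e_1/\tau_1, \ldots, e_k/\tau_k]$ equals $\bigcap_{j=k+1}^d H_j$, hence $N(F,S) = \pos(-e_{k+1}, \ldots, -e_d)$ is a $(d-k)$-dimensional orthant; its polar $T(F,S) = \{x \in \R^d : x_j \geq 0 \text{ for } j \geq k+1\}$ splits as $\lin(e_1,\ldots,e_k) \oplus \pos(e_{k+1}, \ldots, e_d)$, with the first summand being the lineality space of $F$. For the normal cone in part~(c), the face $F = [e_1/\tau_1, \ldots, e_{k+1}/\tau_{k+1}]$ equals $H_0 \cap \bigcap_{j=k+2}^d H_j$, so setting $w_{k+1} := (\tau_1, \ldots, \tau_d)^\top$ and $w_j := -e_j$ for $j \geq k+2$ yields $N(F,S) = \pos(w_{k+1}, \ldots, w_d)$; the prescribed Gram matrix is verified by a direct inner-product computation.

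For the tangent cone in part~(c), I will use the standard description $T(F,S) = L + \pos\{v - x_0 : v \text{ a vertex of } S \setminus F\}$ for any $x_0 \in \relint F$, where $L = \lin\{e_i/\tau_i - e_{k+1}/\tau_{k+1} : 1 \leq i \leq k\}$ is the $k$-dimensional lineality space of $\aff F$. Its orthogonal complement $L^\perp$ is spanned by the vector $u := (\tau_1, \ldots, \tau_{k+1}, 0, \ldots, 0)$ together with $e_{k+2}, \ldots, e_d$, so the pointed part $D = T(F,S) \cap L^\perp$ is the positive hull of the $L^\perp$-projections of $-x_0$ and of $e_j/\tau_j - x_0$ for $j \geq k+2$. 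Taking, say, $x_0 = (k+1)^{-1}\sum_{i=1}^{k+1} e_i/\tau_i$, a direct computation gives projections $-u/\|u\|^2$ and $-u/\|u\|^2 + (1/\tau_j)\,e_j$, respectively; after positive rescaling by $\|u\|$ and $\|u\|\tau_j$ these become the explicit generators $v_{k+1} := -u/\|u\|$ and $v_j := -(\tau_j/\|u\|)\,u + \|u\|\,e_j$ for $j \geq k+2$. A final inner-product check confirms that their Gram matrix equals $M$, using only $\|u\|^2 = \tau_1^2 + \ldots + \tau_{k+1}^2$ and $\langle u, e_j\rangle = 0$ for $j \geq k+2$. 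The main technical obstacle is precisely this scaling bookkeeping: the natural $L^\perp$-projections already generate the correct rays, but one must multiply them by the above diagonal factors to match the prescribed entries of $M$.
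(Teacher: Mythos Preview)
Your proof is correct. Parts~(a), (b), and the normal-cone half of~(c) coincide with the paper's argument essentially verbatim: both identify the facet outer normals $\tau_1 e_1 + \ldots + \tau_d e_d$ and $-e_j$, take the appropriate positive hulls, and read off the Gram matrix.

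For the tangent cone in~(c), however, you take a genuinely different route from the paper. The paper proceeds purely via polarity: it observes that the pointed part of $T(F,S)$ is the polar of $\pos(w_{k+1},\ldots,w_d)$ within $\lin(w_{k+1},\ldots,w_d)$, invokes the machinery from the proof of Theorem~\ref{theo:orthocentric_cone_dual} to conclude that the polar generators have Gram matrix equal to the inverse of that of the $w_i$'s, and then verifies by a direct matrix multiplication that this inverse is $\frac{1}{\tau_1^2+\ldots+\tau_{k+1}^2}M$, so that a uniform rescaling yields $M$. Your approach is instead entirely geometric and self-contained: you write down the lineality space $L$, its orthogonal complement $L^\perp = \lin(u,e_{k+2},\ldots,e_d)$ with $u=(\tau_1,\ldots,\tau_{k+1},0,\ldots,0)$, project the edge directions $0-x_0$ and $e_j/\tau_j-x_0$ onto $L^\perp$, rescale, and check the inner products. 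Your method is more elementary in that it avoids any appeal to earlier results on dual cones or to matrix inversion; the paper's method is cleaner in that a single matrix identity replaces the projection bookkeeping, and it makes the non-singularity of $M$ explicit via~\eqref{eq:inverse_rectangular_cones}, which is subsequently used in Section~\ref{sec:right_angled_simplices}.
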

\begin{proof}
\emph{Proof of (a)}. This follows from the definition of the tangent cone and $N(S,S)=T(S,S)^{\circ}$.

For (b) and (c) note that an outer normal vector of $S$ at the facet opposite to $0$ is given by $\tau_1 e_1 + \ldots + \tau_d e_d$.
Also, for all $i\in \{1, \ldots, d\}$, an outer normal vector at the facet opposite to $e_i/\tau_i$ is given by $-e_i$.

\vspace*{2mm}
\noindent
\emph{Proof of (b)}. The normal cone at any face $F$ is the positive hull of the outer normal vectors at all facets containing $F$. In particular, the normal cone of $S$ at the face $F=[0, e_1/\tau_1, \ldots, e_k/\tau_k]$ is $N(F,S) = \pos(-e_{k+1}, \ldots, -e_d)$, which is an orthant. The identity $T(F, S) = N(F, S)^{\circ}$ now yields the desired description of the tangent cone.

\vspace*{2mm}
\noindent
\emph{Proof of (c)}. Write $w_i=-e_i$ for $i=k+2, \ldots, d$ and let $w_{k+1}=\tau_1 e_1 + \ldots +\tau_d e_d$. The normal cone of $S$ at the face $F=[e_1/\tau_1, \ldots, e_{k+1}/\tau_{k+1}]$ is given by $N(F,S) = \pos(w_{k+1}, w_{k+2}, \ldots, w_d)$. Computing the scalar products yields the desired Gram matrix.  Then, the tangent cone $T(F,S) = N(F,S)^{\circ}$ is the direct orthogonal sum of the lineality space of $F$ and the polar cone of $\pos(w_{k+1}, \ldots, w_d)$, considered as a cone in $\lin(w_{k+1}, \ldots, w_d)$. The Gram matrix of $w_{k+1}, \ldots, w_d$ is non-singular. Indeed, a straightforward computation shows that
\begin{equation}~\label{eq:inverse_rectangular_cones}
\left( \lan w_i, w_j \ran \right)_{i,j=k+1}^d \cdot \frac{1}{\tau_1^2 + \ldots + \tau_{k+1}^2} M = \textrm{Id},
\end{equation}

where $M$ as in~\eqref{eq:gram_matrix_tangent_cone_degenerate}, and where $\textrm{Id}$ denotes the identity matrix.

As shown in the proof of Theorem~\ref{theo:orthocentric_cone_dual} (with different notation),  the polar cone of $\pos(w_{k+1}, \ldots, w_d)$ is spanned by some vectors $\widetilde{v}_{k+1}, \ldots, \widetilde{v}_d$ whose Gram matrix is the inverse of that of $w_{k+1}, \ldots, w_d$. The rescaled vectors $v_i = \widetilde{v_i}/(\tau_1^2 + \ldots \tau_{k+1}^2)^{1/2}$  generate the same polar cone. Since the Gram matrix of $\widetilde{v}_{k+1}, \ldots, \widetilde{v}_d$ is given by $\frac{1}{\tau_1^2 + \ldots + \tau_{k+1}^2} M$, it follows that $M$ is the Gram matrix of $v_{k+1}, \ldots, v_d$.
\end{proof}


\subsection{Limiting cases of the function \texorpdfstring{$\bg_d$}{g\_d}}
To compute the  angles of a rectangular orthocentric simplex $[0,e_1/\tau_1, \ldots, e_d/\tau_d]$, the following two limits of the function  $\bg_d$ will be needed.
\begin{proposition}\label{prop:g_d_limit_lam1}
For $d\geq 1$, $\lambda_0, \lambda_2, \ldots, \lambda_d>0$ and $\eps_1, \ldots, \eps_d \in \{\pm 1\}$ we have
$$
\lim_{\lambda_1 \rightarrow - \infty} \bg_d(\lambda_0; \lambda_1, \ldots, \lambda_d; \eps_1, \ldots, \eps_d)
=
\frac{1}{\sqrt{2\pi}} \int_0^{\infty} \prod_{j=2}^d \Phi\left(\eps_1 \eps_j \sqrt{\frac{\lambda_j}{\lambda_0}}x \right) \eee^{-x^2/2} \dd x.
$$
\end{proposition}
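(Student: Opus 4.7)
The plan is to start from the explicit formula for $\bg_d$ in the ``$\lambda_k < 0$'' regime (Theorem~\ref{theo:formula_for_bg_d(lambda_i,eps_i)_negative}) applied with $k=1$. For $\lambda_1$ sufficiently negative, namely $\lambda_1 < -(\lambda_0 + \lambda_2 + \ldots + \lambda_d)$, that theorem gives $\bg_d(\lambda_0;\lambda_1,\ldots,\lambda_d;\eps_1,\ldots,\eps_d) = \sqrt{\lambda_0/(2\pi)}\,(I_1 + I_2(\lambda_1))$, where
\begin{align*}
I_1 &= \int_0^\infty \eee^{-\lambda_0 y^2/2}\prod_{j=2}^d \Phi\bigl(\eps_1\eps_j\sqrt{\lambda_j}\,y\bigr)\dd y, \\
I_2(\lambda_1) &= 2\int_0^\infty \eee^{\lambda_0 y^2/2}\,\Phi\bigl(-\sqrt{-\lambda_1}\,y\bigr)\,\Im\Bigl(\prod_{j=2}^d \Phi\bigl(i\eps_1\eps_j\sqrt{\lambda_j}\,y\bigr)\Bigr)\dd y.
\end{align*}
The first integral $I_1$ does not depend on $\lambda_1$ at all, so the limit reduces to showing that $I_2(\lambda_1)\to 0$ as $\lambda_1\to -\infty$, followed by the linear substitution $x = \sqrt{\lambda_0}\,y$ which converts $\sqrt{\lambda_0/(2\pi)}\,I_1$ into exactly the right-hand side of the proposition.

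The main (and essentially only) technical step is justifying $I_2(\lambda_1)\to 0$ via dominated convergence. Pointwise, for each fixed $y>0$ we have $\Phi(-\sqrt{-\lambda_1}\,y)\to 0$ as $\lambda_1\to -\infty$, so the integrand converges to $0$. To produce a $\lambda_1$-independent integrable majorant, I would fix a threshold $\Lambda < -(\lambda_0 + \lambda_2 + \ldots + \lambda_d)$ and restrict attention to $\lambda_1 \leq \Lambda$. Using the standard one-sided tail bound $\Phi(-u) \leq \tfrac{1}{2}\eee^{-u^2/2}$ for $u\geq 0$, and monotonicity of $\Phi$, I get
\[
\Phi\bigl(-\sqrt{-\lambda_1}\,y\bigr) \leq \Phi\bigl(-\sqrt{-\Lambda}\,y\bigr) \leq \tfrac{1}{2}\eee^{\Lambda y^2/2}\quad\text{for all }y\geq 0.
\]
Combining this with Proposition~\ref{prop:bound_Phi}, which yields $|\Phi(i\eps_1\eps_j\sqrt{\lambda_j}\,y)| \leq C\max\{1,\eee^{\lambda_j y^2/2}\}$, the whole integrand is bounded by a constant times $\eee^{(\Lambda + \lambda_0 + \lambda_2 + \ldots + \lambda_d)y^2/2}$, which is integrable on $(0,\infty)$ by the choice of $\Lambda$. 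Dominated convergence then gives $\lim_{\lambda_1\to -\infty} I_2(\lambda_1) = 0$.

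The final cosmetic step is the substitution $x=\sqrt{\lambda_0}\,y$ in $I_1$:
\[
\sqrt{\frac{\lambda_0}{2\pi}}\,I_1 = \frac{1}{\sqrt{2\pi}}\int_0^\infty \eee^{-x^2/2}\prod_{j=2}^d \Phi\Bigl(\eps_1\eps_j\sqrt{\tfrac{\lambda_j}{\lambda_0}}\,x\Bigr)\dd x,
\]
matching the right-hand side of the proposition. The only genuine obstacle is producing the uniform-in-$\lambda_1$ majorant for the imaginary-part integrand; everything else is essentially bookkeeping.
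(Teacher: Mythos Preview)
Your proposal is correct and follows essentially the same route as the paper: apply Theorem~\ref{theo:formula_for_bg_d(lambda_i,eps_i)_negative} with $k=1$, observe that the first integral is independent of $\lambda_1$, and kill the second integral by dominated convergence, finishing with the substitution $x=\sqrt{\lambda_0}\,y$. The only difference is in the majorant: the paper invokes Lemma~\ref{lemma:integrable_function_for_limit}(a), which splits $(0,\infty)$ into $(0,1]$ and $[1,\infty)$ and uses a Taylor expansion of the imaginary part near $0$ together with the Mills-ratio bound, whereas your argument---fixing a threshold $\Lambda$ and using $\Phi(-\sqrt{-\lambda_1}\,y)\leq\Phi(-\sqrt{-\Lambda}\,y)\leq \const\cdot\eee^{\Lambda y^2/2}$ combined with Proposition~\ref{prop:bound_Phi}---avoids the interval split entirely and is in fact a bit cleaner for this particular limit. (The paper's lemma is formulated more generally because part~(b) of it, needed for Proposition~\ref{prop:g_d(-lam0-lam1)_limit_lam1}, genuinely requires care near $y=0$.)
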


\begin{proposition}\label{prop:g_d(-lam0-lam1)_limit_lam1}
For $d\geq 1$ let $\lambda_2, \ldots, \lambda_d > 0$, $\lambda_0>\lambda_2+\ldots + \lambda_d$ and $\eps_1, \ldots, \eps_d \in \{\pm 1\}$. Then,
$$
\lim_{\lambda_1 \rightarrow -\infty} \bg_d\left( -\lambda_0-\lambda_1; \lambda_1, \ldots, \lambda_d; \eps_1, \ldots, \eps_d \right)
=
\frac{1}{2^d} + \int_0^{\infty} \frac{1}{\pi y} \eee^{-\lambda_0 y^2/2} \Im \left( \prod_{j=2}^d \Phi \left(i \eps_1 \eps_j \sqrt{\lambda_j} y \right) \right) \dd y.
$$
\end{proposition}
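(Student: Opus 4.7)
The plan is to apply Theorem~\ref{theo:formula_for_bg_d(lambda_i,eps_i)_negative} with the role of its $\lambda_0$ played by $-\lambda_0-\lambda_1$ (which is positive for $\lambda_1<-\lambda_0$) and $k=1$. The hypotheses hold: $-\lambda_0-\lambda_1>0$, $\lambda_2,\ldots,\lambda_d>0$, $\lambda_1<0$, and the sum $(-\lambda_0-\lambda_1)+\lambda_1+\lambda_2+\cdots+\lambda_d=-\lambda_0+\lambda_2+\cdots+\lambda_d<0$ by the standing assumption $\lambda_0>\lambda_2+\cdots+\lambda_d$. Writing the resulting expression as $\bg_d(-\lambda_0-\lambda_1;\lambda_1,\ldots,\lambda_d;\eps_1,\ldots,\eps_d)=I_1(\lambda_1)+I_2(\lambda_1)$, where $I_1$ and $I_2$ are the two summands from that theorem, I would then study the two limits separately.

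For $I_1$, the substitution $u=y\sqrt{-\lambda_0-\lambda_1}$ rescales the integral to
\[
I_1(\lambda_1)=\frac{1}{\sqrt{2\pi}}\int_0^{\infty}\eee^{-u^2/2}\prod_{j=2}^d\Phi\!\left(\eps_1\eps_j\sqrt{\frac{\lambda_j}{-\lambda_0-\lambda_1}}\,u\right)\dd u.
\]
Since the integrand is dominated by $\frac{1}{\sqrt{2\pi}}\eee^{-u^2/2}$ and each $\Phi$-factor tends to $\Phi(0)=\tfrac12$ pointwise in $u$ as $\lambda_1\to-\infty$, dominated convergence yields $I_1(\lambda_1)\to 2^{-(d-1)}\cdot\tfrac12=2^{-d}$, the first term of the claimed limit. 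For $I_2$, the Mills-type asymptotic $\Phi(-z)\sim\eee^{-z^2/2}/(z\sqrt{2\pi})$ as $z\to\infty$ shows that, pointwise in $y>0$,
\[
2\sqrt{\frac{-\lambda_0-\lambda_1}{2\pi}}\,\eee^{(-\lambda_0-\lambda_1)y^2/2}\Phi\!\bigl(-\sqrt{-\lambda_1}\,y\bigr)=\frac{1}{\pi y}\sqrt{\frac{-\lambda_0-\lambda_1}{-\lambda_1}}\,\eee^{-\lambda_0 y^2/2}(1+o(1))\longrightarrow\frac{\eee^{-\lambda_0 y^2/2}}{\pi y}.
\]
Multiplying by the $\lambda_1$-independent $\Im$-factor produces precisely the second term of the claimed limit in the pointwise sense.

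The main obstacle is exhibiting an integrable dominating function for the $I_2$-integrand, uniform in $\lambda_1<-\lambda_0$. The non-asymptotic Mills inequality $\Phi(-z)\leq\eee^{-z^2/2}/(z\sqrt{2\pi})$ for $z>0$, combined with $\sqrt{(-\lambda_0-\lambda_1)/(-\lambda_1)}\leq 1$ (which uses only $\lambda_0>0$), gives the uniform bound
\[
2\sqrt{\frac{-\lambda_0-\lambda_1}{2\pi}}\,\eee^{(-\lambda_0-\lambda_1)y^2/2}\Phi\!\bigl(-\sqrt{-\lambda_1}\,y\bigr)\leq\frac{\eee^{-\lambda_0 y^2/2}}{\pi y},\qquad y>0.
\]
The $1/y$ singularity at the origin is tamed by the Taylor expansion $\Phi(iay)=\tfrac12+\tfrac{iay}{\sqrt{2\pi}}+O(y^3)$, which implies $\Im\bigl(\prod_{j=2}^d\Phi(i\eps_1\eps_j\sqrt{\lambda_j}y)\bigr)=O(y)$ as $y\to 0$ with a constant independent of $\lambda_1$. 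For large $y$, Proposition~\ref{prop:bound_Phi} gives $|\Phi(i\eps_1\eps_j\sqrt{\lambda_j}y)|\leq C\eee^{\lambda_j y^2/2}$, so the imaginary part is bounded by $C^{d-1}\eee^{(\lambda_2+\cdots+\lambda_d)y^2/2}$; here the hypothesis $\lambda_0>\lambda_2+\cdots+\lambda_d$ becomes crucial, yielding overall Gaussian decay $\eee^{-(\lambda_0-\lambda_2-\cdots-\lambda_d)y^2/2}$. Splitting the integral at $y=1$ and invoking the dominated convergence theorem finishes the argument.
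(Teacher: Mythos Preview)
Your proposal is correct and follows essentially the same approach as the paper: apply Theorem~\ref{theo:formula_for_bg_d(lambda_i,eps_i)_negative}, split into two summands, handle the first by the substitution $u=y\sqrt{-\lambda_0-\lambda_1}$ and dominated convergence, and handle the second via the Mills ratio asymptotics together with an integrable majorant built from the Mills inequality near infinity and the Taylor expansion of $\Phi$ near the origin. The paper isolates your domination argument for $I_2$ as a separate lemma (Lemma~\ref{lemma:integrable_function_for_limit}\,(b)), but the content is the same; your bound $\sqrt{(-\lambda_0-\lambda_1)/(-\lambda_1)}\leq 1$ is in fact slightly sharper than the one used there.
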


The following assertion is a preparation for the proof of both propositions.
\begin{lemma}\label{lemma:integrable_function_for_limit}
Let $d\geq 2$, $\lambda_0, \lambda_2, \ldots, \lambda_d>0$ and $\eps_1, \ldots, \eps_d \in \{\pm 1\}$.
\begin{enumerate}
\item Let $\lambda_1<0$ with $|\lambda_1|$  sufficiently large. Then, the functions
$$
y\mapsto \Bigg| \eee^{\lambda_0 y^2/2} \Phi\left( -\sqrt{-\lambda_1}y \right) \Im \left( \prod_{j=2}^d \Phi\left( i\eps_1\eps_j \sqrt{\lambda_j}y \right) \right) \Bigg|,
\qquad y>0,
$$
have an integrable majorant which does not depend on $\lambda_1$.
\item Additionally, assume $\lambda_0>\lambda_2 + \ldots + \lambda_d$. Provided that $\lambda_1<0$ and $|\lambda_1|$ is sufficiently large, the functions
$$
y\mapsto \Bigg| \sqrt{-\lambda_0-\lambda_1} \eee^{(-\lambda_0-\lambda_1) y^2/2} \Phi\left( -\sqrt{-\lambda_1}y \right) \Im \left( \prod_{j=2}^d \Phi\left( i\eps_1\eps_j \sqrt{\lambda_j}y \right) \right) \Bigg|,
\qquad y>0,
$$
have an integrable majorant which does not depend on $\lambda_1$.
\end{enumerate}
\end{lemma}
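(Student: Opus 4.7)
The plan is to exhibit, for each of the two families, an integrable majorant on $(0,\infty)$ that is independent of $\lambda_1$. In both parts, two Mills-type tail bounds control the real factor $\Phi(-\sqrt{-\lambda_1}\,y)$, namely
\[
\Phi(-t)\leq \eee^{-t^2/2}\quad(t\geq 0)
\qquad\text{and}\qquad
\Phi(-t)\leq \tfrac{1}{t\sqrt{2\pi}}\eee^{-t^2/2}\quad(t>0),
\]
while Proposition~\ref{prop:bound_Phi} controls the imaginary-argument factors:
\[
|\Im P(y)|\leq |P(y)|\leq C^{d-1}\max\{1,\eee^{(\lambda_2+\ldots+\lambda_d)y^2/2}\},
\qquad P(y):=\prod_{j=2}^d\Phi(i\eps_1\eps_j\sqrt{\lambda_j}\,y),
\]
with an absolute constant $C$ that does not depend on $\lambda_1$.

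For Part~(a), I would use the first (crude) tail bound, giving $|\Phi(-\sqrt{-\lambda_1}\,y)|\leq \eee^{\lambda_1 y^2/2}$, and multiply the three estimates to obtain
\[
|\text{integrand in (a)}|\leq C^{d-1}\eee^{(\lambda_0+\lambda_1)y^2/2}\max\{1,\eee^{(\lambda_2+\ldots+\lambda_d)y^2/2}\}.
\]
Choosing a threshold $\lambda_1^{(0)}>\lambda_0+\lambda_2+\ldots+\lambda_d+1$ and restricting to $\lambda_1<-\lambda_1^{(0)}$ makes both alternatives in the maximum at most $\eee^{-y^2/2}$, so $C^{d-1}\eee^{-y^2/2}$ is the desired integrable $\lambda_1$-independent majorant.

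For Part~(b), the factor $\sqrt{-\lambda_0-\lambda_1}$ diverges as $\lambda_1\to-\infty$, so I must use the sharper Mills inequality. After the algebraic cancellation $(-\lambda_0-\lambda_1)+\lambda_1=-\lambda_0$, this gives, for all $y>0$,
\[
|\text{integrand in (b)}|\leq \frac{1}{y\sqrt{2\pi}}\sqrt{\frac{-\lambda_0-\lambda_1}{-\lambda_1}}\,\eee^{-\lambda_0 y^2/2}\,|\Im P(y)|.
\]
The ratio $\sqrt{(-\lambda_0-\lambda_1)/(-\lambda_1)}\to 1$ as $\lambda_1\to-\infty$, hence is bounded (say by $2$) for $|\lambda_1|$ large. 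On $[1,\infty)$, bounding $|\Im P(y)|$ via Proposition~\ref{prop:bound_Phi} and using $\lambda_0>\lambda_2+\ldots+\lambda_d$ produces Gaussian decay $\eee^{-cy^2}$, so integrability is clear there. The main obstacle is the $1/y$ singularity at $y=0$.

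To remove this singularity I will exploit that $P(0)=2^{-(d-1)}\in\R$ forces $\Im P(0)=0$. Since $P$ is entire and $|P'|$ is bounded on the compact interval $[0,1]$ by a constant $K=K(\lambda_2,\ldots,\lambda_d)$ independent of $\lambda_1$, the mean value theorem applied to the real function $y\mapsto \Im P(y)$ yields $|\Im P(y)|\leq Ky$ on $[0,1]$. Substituting this on $(0,1)$ cancels the $1/y$ factor and produces a uniform bound there. Gluing the bound on $(0,1)$ with the bound on $[1,\infty)$ yields the required integrable, $\lambda_1$-independent majorant on $(0,\infty)$.
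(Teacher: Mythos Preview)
Your proposal is correct. Part~(b) follows essentially the same route as the paper: apply the Mills bound $\Phi(-t)\leq\tfrac{1}{t\sqrt{2\pi}}\eee^{-t^2/2}$ so that the exponent collapses to $-\lambda_0 y^2/2$, bound the ratio $\sqrt{(-\lambda_0-\lambda_1)/(-\lambda_1)}$ by a constant, split at $y=1$, use $\lambda_0>\lambda_2+\ldots+\lambda_d$ together with Proposition~\ref{prop:bound_Phi} for Gaussian decay on $[1,\infty)$, and handle the $1/y$ singularity on $(0,1]$ via $\Im P(y)=O(y)$ near $0$ (you invoke the mean value theorem; the paper writes out the Taylor expansion $\Im P(y)=cy+o(y)$, which amounts to the same thing).

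Part~(a), however, is genuinely cleaner than the paper's argument. The paper uses the sharp Mills bound already in~(a), thereby introducing a $1/y$ factor that is not needed and forcing the same $(0,1]$--$[1,\infty)$ split and small-$y$ analysis as in~(b). Your choice of the cruder Chernoff-type bound $\Phi(-t)\leq\eee^{-t^2/2}$ avoids the singularity altogether: the three estimates multiply directly to $C^{d-1}\eee^{(\lambda_0+\lambda_1+\lambda_2+\ldots+\lambda_d)y^2/2}$ (or $C^{d-1}\eee^{(\lambda_0+\lambda_1)y^2/2}$ for small $y$), and for $|\lambda_1|$ large enough this is dominated by $C^{d-1}\eee^{-y^2/2}$ on all of $(0,\infty)$. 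That is a one-line majorant with no case distinction, a worthwhile simplification over the paper's treatment.
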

\begin{proof}
The Mills ratio inequality~\cite[Eq. 7]{Gordon_MillsRatio} says $\Phi(-x) \leq 1/(\sqrt{2\pi}x) \eee^{-x^2/2}$ for all $x>0$. Hence,
\begin{equation}\label{eq:mills_ratio_ineq_appl}
\Phi(-\sqrt{-\lambda_1}y)
\leq
\frac{1}{\sqrt{2\pi} \sqrt{-\lambda_1}y} \eee^{\lambda_1 y^2/2},
\quad
\text{for all $y>0$ and $\lambda_1<0$}.
\end{equation}

\vspace*{2mm}
\noindent
\emph{Proof of (a).} We shall find an integrable majorant on the intervals $(0,1]$ and $[1,\infty)$ which we shall  treat separately. On the interval $(0,1]$, we use~\eqref{eq:mills_ratio_ineq_appl} to bound the function in (a) from above by
\begin{equation}\label{eq:tech_upper_bound_134563}
y\mapsto \frac{\eee^{\lambda_0 y^2/2}}{\sqrt{2\pi}  y} \Bigg| \Im \left( \prod_{j=2}^d \Phi\left( i\eps_1\eps_j \sqrt{\lambda_j}y \right) \right)\Bigg|.
\end{equation}
This holds for all  $\lambda_1\leq -1$. It remains to show that~\eqref{eq:tech_upper_bound_134563} is integrable over $(0,1]$. Taylor's theorem yields
$$
\Phi(x)
=
\frac{1}{2} + \frac{1}{\sqrt{2\pi}}x + o(x), \; \text{ as } x\rightarrow 0.
$$
Hence, there exists some $c\in \R$ such that
\begin{equation}\label{eq:pf_lemma_integrability_bd_for_im_1}
\prod_{j=2}^d \Phi\left( i\eps_1\eps_j \sqrt{\lambda_j}y \right)
=
\frac{1}{2^{d-1}} + icy + o(y)
,\;\;
\Im \left( \prod_{j=2}^d \Phi\left( i\eps_1\eps_j \sqrt{\lambda_j}y \right) \right) = cy+o(y), \text{ as } y\rightarrow 0.
\end{equation}
It follows that the function~\eqref{eq:tech_upper_bound_134563} has a finite limit as $y\downarrow 0$, which implies its integrability.

Now, consider the interval $[1,\infty)$. By Proposition~\ref{prop:bound_Phi} there exists $C>0$, such that for all $y>0$ and $\lambda_1<0$ we have the inequality
\begin{equation}\label{eq:pf_lemma_integrability_bd_for_im_2}
\Bigg| \Im \left( \prod_{j=2}^d \Phi\left( i\eps_1\eps_j \sqrt{\lambda_j}y \right) \right) \Bigg|
\leq
C^{d-1} \prod_{j=2}^d \max \left\{ 1, \eee^{\lambda_j y^2/2} \right\}
=
C^{d-1} \eee^{(\lambda_2+\ldots + \lambda_d)y^2/2}.
\end{equation}
With~\eqref{eq:mills_ratio_ineq_appl}, we observe that for all $y\geq 1$ and $\lambda_1\leq -2\lambda_0-\lambda_2 - \ldots - \lambda_d$, the function in (a) is bounded from above by
$$
y\mapsto \const \cdot \frac{1}{\sqrt{-\lambda_1} y} \eee^{(\lambda_0+\lambda_1+\ldots + \lambda_d)y^2/2}
\leq
\const \cdot \eee^{-\lambda_0 y^2/2}.
$$
This upper bound is integrable over $[1, \infty)$. Combining the two intervals concludes the proof of (a).

\vspace*{2mm}
\noindent
\emph{Proof of (b).} This proof is similar to that of (a). Consider the interval $(0,1]$. By~\eqref{eq:mills_ratio_ineq_appl}, the function in (b) is bounded from above by
$$
y\mapsto  \eee^{-\lambda_0y^2/2} \frac{2}{\sqrt{2\pi}y} \Bigg| \Im \left( \prod_{j=2}^d \Phi\left( i\eps_1\eps_j \sqrt{\lambda_j}y \right) \right)\Bigg|,
$$
using that $\sqrt{-\lambda_0-\lambda_1}/\sqrt{-\lambda_1}<2$ provided $\lambda_1<0$ is sufficiently negative.
It follows from~\eqref{eq:pf_lemma_integrability_bd_for_im_1} that the upper bound has a finite limit as $y\downarrow 0$ and, hence, is integrable on $(0,1]$.

It remains to find an integrable majorant on the interval $[1, \infty)$. With~\eqref{eq:mills_ratio_ineq_appl} and~\eqref{eq:pf_lemma_integrability_bd_for_im_2} we obtain: For all $\lambda_1<-\lambda_0$ and $y\geq 1$, the function in (b) is bounded from above by
$$
\const \cdot \sqrt{-\lambda_0-\lambda_1} \eee^{(-\lambda_0-\lambda_1)y^2/2} \frac{\eee^{\lambda_1 y^2/2}}{\sqrt{2\pi} \sqrt{-\lambda_1}y} \eee^{(\lambda_2+\ldots + \lambda_d)y^2/2}
\leq
\const \cdot \eee^{(-\lambda_0+\lambda_2+\ldots + \lambda_d)y^2/2}.
$$
This is integrable over $[1,\infty)$, which concludes the proof.
\end{proof}

\begin{proof}[Proof of Proposition~\ref{prop:g_d_limit_lam1}]
For $d=1$, we have $\bg_1(\lambda_0;\lambda_1;\eps_1) = \P[\eps_1 \eta_1\leq 0] = 1/2$, where $\eta_1$ is some zero-mean Gaussian distributed random variable. Hence, the equation is satisfied.

Now, let $d\geq 2$. For $\lambda_1$ sufficiently negative, Theorem~\ref{theo:formula_for_bg_d(lambda_i,eps_i)_negative} yields
\begin{align*}
\bg_d(\lambda_0;\lambda_1, \ldots, \lambda_d; \eps_1, \ldots, \eps_d)
=
\sqrt{\frac{\lambda_0}{2\pi}} \Biggl(&
\int_0^{\infty} \eee^{-\frac{\lambda_0 y^2}{2}} \prod_{j=2}^d \Phi \left(\eps_1 \eps_j \sqrt{\lambda_j} y \right) \dd y\\
&+2 \int_0^{\infty} \eee^{\frac{\lambda_0 y^2}{2}} \Phi\left(-\sqrt{-\lambda_1} y \right) \Im \left( \prod_{j=2}^d \Phi \left(i \eps_1 \eps_j \sqrt{\lambda_j} y \right) \right) \dd y
\Biggr).
\end{align*}
The first integral does not depend in $\lambda_1$. In the second integral we have $\Phi\left(-\sqrt{-\lambda_1} y \right) \rightarrow 0$ as $\lambda_1 \rightarrow -\infty$, for all $y>0$, and Lemma~\ref{lemma:integrable_function_for_limit} (a) entails that dominated convergence theorem is applicable. Hence, the second integral converges to $0$ as $\lambda_1 \rightarrow -\infty$. Substituting $x=\sqrt{\lambda_0} y$ in the first integral completes the proof.
\end{proof}

\begin{proof}[Proof of Proposition~\ref{prop:g_d(-lam0-lam1)_limit_lam1}]
For $d=1$, we have $\bg_1(\lambda_0;\lambda_1;\eps_1) = \P[\eps_1 \eta_1\leq 0] = 1/2$, and the equation holds true.
Now, let $d\geq 2$. For $\lambda_1$ sufficiently negative, Theorem~\ref{theo:formula_for_bg_d(lambda_i,eps_i)_negative} gives $\bg_d(\lambda_0; \lambda_1, \ldots, \lambda_d; \eps_1, \ldots, \eps_d) = S_1(\lambda_1) + S_2(\lambda_1)$, where
\begin{align*}
S_1(\lambda_1)
=&
\sqrt{\frac{-\lambda_0-\lambda_1}{2\pi}} \int_0^{\infty} \eee^{-\frac{(-\lambda_0-\lambda_1) y^2}{2}} \prod_{j=2}^d \Phi \left(\eps_1 \eps_j \sqrt{\lambda_j} y \right) \dd y
\quad \text{ and } \quad\\
S_2(\lambda_1)
=&
2\sqrt{\frac{-\lambda_0-\lambda_1}{2\pi}} \int_0^{\infty} \eee^{\frac{(-\lambda_0-\lambda_1) y^2}{2}} \Phi\left(-\sqrt{-\lambda_1} y \right) \Im \left( \prod_{j=2}^d \Phi \left(i \eps_1 \eps_j \sqrt{\lambda_j} y \right) \right) \dd y.
\end{align*}
For $S_1$, substitute $x=\sqrt{-\lambda_0-\lambda_1}y$ and use the dominated convergence theorem:
\begin{align*}
\lim_{\lambda_1 \rightarrow -\infty} S_1(\lambda_1)
=&
\lim_{\lambda_1 \rightarrow -\infty} \frac{1}{\sqrt{2\pi}} \int_0^{\infty} \eee^{-x^2/2} \prod_{j=2}^d \Phi \left( \eps_1 \eps_j \sqrt{\frac{\lambda_j}{-\lambda_0-\lambda_1}}x \right) \dd x\\
=&
\frac{1}{\sqrt{2\pi}} \int_0^{\infty} \eee^{-x^2/2} \prod_{j=2}^d \Phi \left( 0 \right) \dd x
=
\frac{1}{\sqrt{2\pi}} \frac{1}{2^{d-1}} \frac{\sqrt{2\pi}}{2}
=
\frac{1}{2^d}.
\end{align*}
For $S_2$, the Mills ratio asymptotics of $\Phi$, see, e.g.\ \cite[Lemma 3.10]{kabluchkoklimovsky}, gives that for all $y>0$,
$$
\Phi(-\sqrt{-\lambda_1}y)
=
\frac{1+o(1)}{\sqrt{2\pi} \sqrt{-\lambda_1}y} \eee^{\lambda_1 y^2/2}, \quad \lambda_1 \rightarrow -\infty.
$$
For all $y>0$ this entails
\begin{align*}
&2\sqrt{\frac{-\lambda_0-\lambda_1}{2\pi}} \eee^{\frac{(-\lambda_0-\lambda_1) y^2}{2}} \Phi\left(-\sqrt{-\lambda_1} y \right) \Im \left( \prod_{j=2}^d \Phi \left(i \eps_1 \eps_j \sqrt{\lambda_j} y \right) \right)\\
=
&\frac{1+o(1)}{\pi y} \eee^{-\lambda_0 y^2/2} \Im \left( \prod_{j=2}^d \Phi \left(i \eps_1 \eps_j \sqrt{\lambda_j} y \right) \right)
\xrightarrow[\lambda_1 \rightarrow -\infty]{}
\frac{1}{\pi y} \eee^{-\lambda_0 y^2/2} \Im \left( \prod_{j=2}^d \Phi \left(i \eps_1 \eps_j \sqrt{\lambda_j} y \right) \right).
\end{align*}
By  Lemma~\ref{lemma:integrable_function_for_limit} (b), the dominated convergence theorem is applicable and it follows that
$$
\lim_{\lambda_1 \rightarrow -\infty} S_2(\lambda_1)
=
\int_0^{\infty} \frac{1}{\pi y} \eee^{-\lambda_0 y^2/2} \Im \left( \prod_{j=2}^d \Phi \left(i \eps_1 \eps_j \sqrt{\lambda_j} y \right) \right) \dd y.
$$
The desired limit is given by the sum of the limits of $S_1(\lambda_1)$ and $S_2(\lambda_1)$.
\end{proof}

\subsection{Angles of rectangular simplices}
With the limits for $\bg_d$ established, we can now derive formulas for the internal and external angles of rectangular orthocentric simplices.
The main idea is to consider the simplex $[0, \frac{e_1}{\tau_1}, \ldots, \frac{e_d}{\tau_d}]$ as a limit of the simplex $[\frac{\tau_0 e_0 +\ldots + \tau_d e_d}{\tau_0^2+\ldots + \tau_d^2}, \frac{e_1}{\tau_1}, \ldots, \frac{e_d}{\tau_d}]$ as $\tau_0 \rightarrow +\infty$.

\begin{theorem}[Angles of rectangular simplices]
For $d\geq 2$ and $\tau_1, \ldots, \tau_d>0$ consider the simplex $S=[0, \frac{e_1}{\tau_1}, \ldots, \frac{e_d}{\tau_d}]$.
\begin{enumerate}
\item For $k\in \{0,\ldots, d\}$ let $F=[0, \frac{e_1}{\tau_1}, \ldots, \frac{e_k}{\tau_k}]$. The internal and external angles of $S$ at $F$ are given by $\beta(F,S) = 1/2^{d-k} = \gamma(F,S)$.
\item For $k\in \{0, \ldots, d-1\}$ let $F=[\frac{e_1}{\tau_1}, \ldots, \frac{e_{k+1}}{\tau_{k+1}}]$. The internal angle of $S$ at $F$ is given by
\begin{align}
\beta(F,S)
&=
\lim_{\lambda_1 \rightarrow \pm \infty}\bg_{d-k} \left( -(\tau_1^2 + \ldots + \tau_d^2)-\lambda_1; \lambda_1, \tau_{k+2}^2, \ldots, \tau_d^2; \sgn(\lambda_1),1,\ldots, 1 \right) \label{eq:rectangular_simplex_internal_angle_limit}\\
&=
\frac{1}{2^{d-k}} + \frac{1}{\pi} \int_0^{\infty} \frac{\eee^{-(\tau_1^2+\ldots + \tau_d^2)y^2/2}}{y} \Im\left( \prod_{j=k+2}^d \Phi\left(-i \tau_j y\right) \right) \dd y. \label{eq:rectangular_simplex_internal_angle_explicit_formula}
\end{align}
The external angle of $S$ at $F$ is given by
\begin{align}
\gamma(F,S)
&=
\lim_{\lambda_1 \rightarrow \pm \infty} \bg_{d-k}\left( \tau_1^2 +\ldots +\tau_{k+1}^2; \lambda_1, \tau_{k+2}^2, \ldots, \tau_d^2; 1, \ldots, 1 \right) \label{eq:rectangular_simplex_external_angle_limit}\\
&=
\frac{1}{\sqrt{2\pi}} \int_0^{\infty} \prod_{j=k+2}^d \Phi \left( \frac{\tau_j}{\sqrt{\tau_1^2 + \ldots + \tau_{k+1}^2}} \right) \eee^{-x^2/2}  \dd x. \label{eq:rectangular_simplex_external_angle_explicit_formula}
\end{align}
\end{enumerate}
\end{theorem}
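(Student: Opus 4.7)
The plan is to handle Part~(a) directly from the structural description of the cones in Proposition~\ref{prop:degenerate_simpl_cones}, and to obtain Part~(b) by realizing the rectangular simplex $S$ as a limit of oblique orthocentric simplices and invoking continuity of solid angles together with the limit identities for $\bg_d$ established in Propositions~\ref{prop:g_d_limit_lam1} and~\ref{prop:g_d(-lam0-lam1)_limit_lam1}.

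For Part~(a), I will invoke Proposition~\ref{prop:degenerate_simpl_cones}(b): $N(F,S)$ is a $(d-k)$-dimensional orthant and $T(F,S)$ is the orthogonal direct sum of the lineality space of $F$ with a $(d-k)$-dimensional orthant. An $m$-dimensional orthant has solid angle $1/2^m$, and adjoining a lineality space leaves the solid angle unchanged; hence $\beta(F,S)=\gamma(F,S)=1/2^{d-k}$.

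For Part~(b), the approach is to approximate $S$ from two sides. The obtuse simplices
\[
S_{\tau_0} := \Bigl[\tfrac{\tau_0 e_0+\tau_1 e_1+\ldots+\tau_d e_d}{\tau_0^2+\ldots+\tau_d^2},\; \tfrac{e_1}{\tau_1},\ldots,\tfrac{e_d}{\tau_d}\Bigr]
\]
from Example~\ref{example:orthocenter_outside} converge to $S$ as $\tau_0\to\infty$, since the first vertex has norm $1/\sqrt{\tau_0^2+\ldots+\tau_d^2}\to 0$. Similarly, the acute simplices $S'_{\sigma_0}:=[e_0/\sigma_0,\, e_1/\tau_1,\ldots,e_d/\tau_d]$ from Example~\ref{example:e_i/tau_i} converge to $S$ as $\sigma_0\to\infty$. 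The face $F=[e_1/\tau_1,\ldots,e_{k+1}/\tau_{k+1}]$ is a face of each approximating simplex as well as of $S$; moreover, by Proposition~\ref{prop:degenerate_simpl_cones}(c), the limiting cones $T(F,S)$ and $N(F,S)$ (modulo lineality space) remain full-dimensional in their linear spans. Since outer normal vectors of facets containing $F$ depend continuously on the vertices, the generators of $N(F,S_{\tau_0})$ and $N(F,S'_{\sigma_0})$ converge to those of $N(F,S)$, and likewise for tangent cones via duality. Solid angles are continuous under such convergence, yielding
\[
\beta(F,S)=\lim_{\tau_0\to\infty}\beta(F,S_{\tau_0})=\lim_{\sigma_0\to\infty}\beta(F,S'_{\sigma_0}),
\]
and analogously for $\gamma$. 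The substitutions $\lambda_1=-\tau_0^2-(\tau_1^2+\ldots+\tau_d^2)\to-\infty$ in Theorem~\ref{theo:cones_angles_orthoc_simplices_outside}(d) and $\lambda_1=\sigma_0^2\to+\infty$ in Theorem~\ref{theo:orthocentr_simpl_relint_tangent_normal_angles} (the latter applied to $F$ after a trivial relabeling of indices) make the resulting arguments of $\bg_{d-k}$ coincide, producing the first equalities in~\eqref{eq:rectangular_simplex_internal_angle_limit} and~\eqref{eq:rectangular_simplex_external_angle_limit}.

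Finally, the explicit integral formulas~\eqref{eq:rectangular_simplex_internal_angle_explicit_formula} and~\eqref{eq:rectangular_simplex_external_angle_explicit_formula} follow by directly evaluating the $\lambda_1\to-\infty$ limits: Proposition~\ref{prop:g_d(-lam0-lam1)_limit_lam1} with $\lambda_0=\tau_1^2+\ldots+\tau_d^2$, $\eps_1=-1$, and $\eps_j=1$ for $j\geq 2$ yields the internal angle formula, while Proposition~\ref{prop:g_d_limit_lam1} with $\lambda_0=\tau_1^2+\ldots+\tau_{k+1}^2$ and all signs equal to $+1$ yields the external angle formula. The main obstacle is the rigorous justification of the continuity of solid angles under the vertex-convergence $\tau_0\to\infty$ (and $\sigma_0\to\infty$); this reduces to a continuity-of-outer-normals argument combined with continuity of solid angles with respect to the generators of a pointed cone, both of which are standard but should be spelled out carefully, exploiting the full-dimensionality guaranteed by Proposition~\ref{prop:degenerate_simpl_cones}(c).
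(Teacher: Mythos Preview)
Your proposal is correct and follows essentially the same approach as the paper: Part~(a) via Proposition~\ref{prop:degenerate_simpl_cones}, and Part~(b) by approximating $S$ with obtuse simplices $S_{\tau_0}$ (for $\lambda_1\to-\infty$) and acute simplices $S'_{\sigma_0}$ (for $\lambda_1\to+\infty$), then invoking Propositions~\ref{prop:g_d_limit_lam1} and~\ref{prop:g_d(-lam0-lam1)_limit_lam1}. The only difference is that the paper justifies the continuity of solid angles by explicitly verifying convergence of the normalized Gram matrices and citing a continuity result for simplicial-cone angles (\cite[Corollary~5.2]{kabluchko_zaporozhets_gauss_simplex}), whereas you sketch it via convergence of outer normals; your outline is fine, but when you write it out you should make the Gram-matrix convergence precise (and treat the boundary case $k=d-1$ separately, where both sides reduce to $1/2$).
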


\begin{proof}
\emph{Proof of (a).} The case $k=d$ follows from Proposition~\ref{prop:degenerate_simpl_cones} (a), as both sides are equal to $1$. The remaining cases, $k\in \{0,\ldots, d-1\}$, can be derived from Proposition~\ref{prop:degenerate_simpl_cones} (b).

\vspace*{2mm}
\noindent
\emph{Proof of (b).} In the case $k=d-1$, both sides are equal to $1/2$. Now, consider $k\in \{0, \ldots, d-2\}$. For all $\tau_0>0$, denote $S_{\tau_0}=[\frac{\tau_0 e_0 +\ldots + \tau_d e_d}{\tau_0^2+\ldots + \tau_d^2}, \frac{e_1}{\tau_1}, \ldots, \frac{e_d}{\tau_d}]$. We claim
$$
\beta(F,S) = \lim_{\tau_0 \rightarrow +\infty} \beta(F,S_{\tau_0})
\quad \text{ and } \quad
\gamma(F,S) = \lim_{\tau_0 \rightarrow +\infty} \gamma(F,S_{\tau_0}).
$$
Indeed, this can be seen as follows. By Theorem~\ref{theo:cones_angles_orthoc_simplices_outside} (b), for all $\tau_0>0$, the tangent cone $T(F,S_{\tau_0})$ is the direct orthogonal sum of the lineality space of $F$ (which does not influence the angle) and a pointed cone
$$
C_{d-k}\left( \tau_1^2 + \ldots + \tau_{k+1}^2; -\tau_0^2 - \ldots - \tau_d^2, \tau_{k+2}^2, \ldots, \tau_d^2; 1, \ldots, 1 \right)
=
\pos(v_{\tau_0,{k+1}}, \ldots, v_{\tau_0,d}),
$$
for suitable  vectors $v_{\tau_0,{k+1}}, \ldots, v_{\tau_0,d}$.

Moreover, by Proposition~\ref{prop:degenerate_simpl_cones} (c),
the tangent cone of $S$ at $F$ is the direct orthogonal sum of the lineality space and a cone $\pos(v_{k+1}, \ldots, v_d)$ which has Gram matrix~\eqref{eq:gram_matrix_tangent_cone_degenerate}. A computation shows that
$$
\lim_{\tau_0 \rightarrow +\infty} \frac{\lan v_{\tau_0,i}, v_{\tau_0,j} \ran}{\|v_{\tau_0,i} \| \|v_{\tau_0,j}\|}
=
\frac{\lan v_i, v_j \ran}{\|v_i\| \|v_j\|}
$$
for all $k+1\leq i\neq j \leq d$. Moreover, the Gram matrix of $v_{k+1}, \ldots, v_d$ is non-singular by~\eqref{eq:inverse_rectangular_cones}, so the vectors are linearly independent. Consequently,~\cite[Corollary~5.2]{kabluchko_zaporozhets_gauss_simplex} entails
\begin{align*}
\lim_{\tau_0 \rightarrow +\infty} \beta(F,S_{\tau_0})
&=
\lim_{\tau_0 \rightarrow +\infty} \alpha\left( T(F,S_{\tau_0}) \right)
=
\lim_{\tau_0 \rightarrow +\infty} \alpha \left( \pos\left(v_{\tau_0,k+1}, \ldots, v_{\tau_0,d} \right) \right)
=
\alpha \left( \pos\left(v_{k+1}, \ldots, v_d \right) \right)\\
&=
\alpha\left( T(F,S) \right)
=
\beta(F,S).
\end{align*}
Inverting the Gram matrices also   implies $\lim_{\tau_0 \rightarrow \infty} \gamma(F,S_{\tau_0}) = \gamma(F,S)$.

Taking the angles of $S_{\tau_0}$ from Theorem~\ref{theo:cones_angles_orthoc_simplices_outside} (d), we obtain
\begin{align*}
\beta(F,S)
=&
\lim_{\tau_0 \rightarrow +\infty} \bg_{d-k} \left( \tau_0^2; -\tau_0^2-\tau_1^2 - \ldots - \tau_d^2,\tau_{k+1}^2, \ldots, \tau_d^2; -1,1,\ldots, 1 \right)\\
=&
\lim_{\lambda_1 \rightarrow -\infty}\bg_{d-k} \left( -(\tau_1^2 + \ldots + \tau_d^2)-\lambda_1; \lambda_1, \tau_{k+2}^2, \ldots, \tau_d^2; -1,1,\ldots, 1 \right)\\
=&
\frac{1}{2^{d-k}} + \frac{1}{\pi} \int_0^{\infty} \frac{\eee^{-(\tau_1^2+\ldots + \tau_d^2)y^2/2}}{y} \Im\left( \prod_{j=k+2}^d \Phi\left(-i \tau_j y\right) \right) \dd y.
\end{align*}
Here, for the second equation we set $\lambda_1 :=-\tau_0^2-\tau_1^2-\ldots - \tau_d^2 \to -\infty$, and the third equation follows with Proposition~\ref{prop:g_d(-lam0-lam1)_limit_lam1}. Similarly, Theorem~\ref{theo:cones_angles_orthoc_simplices_outside} (d) and Proposition~\ref{prop:g_d_limit_lam1} entail
\begin{align*}
\gamma(F,S)
=&
\lim_{\tau_0 \to +\infty} \bg_{d-k}\left( \tau_1^2 +\ldots +\tau_{k+1}^2; -\tau_0^2-\tau_1^2 -\ldots - \tau_d^2, \tau_{k+2}^2, \ldots, \tau_d^2; 1, \ldots, 1 \right)\\
=&
\lim_{\lambda_1 \rightarrow -\infty} \bg_{d-k}\left( \tau_1^2 +\ldots +\tau_{k+1}^2; \lambda_1, \tau_{k+2}^2, \ldots, \tau_d^2; 1, \ldots, 1 \right)\\
=&
\frac{1}{\sqrt{2\pi}} \int_0^{\infty} \prod_{j=k+2}^d \Phi\left( \frac{\tau_j}{\sqrt{\tau_1^2 + \ldots + \tau_{k+1}^2}}x \right) \eee^{-x^2/2} \dd x.
\end{align*}

To complete the proof of Part~(b), it remains to justify that the limits in formulas~\eqref{eq:rectangular_simplex_internal_angle_limit} and~\eqref{eq:rectangular_simplex_external_angle_limit} for $\beta(F,S)$ and $\gamma(F,S)$ also hold as $\lambda_1 \to +\infty$ (in contrast to the previously considered case $\lambda_1 \to -\infty$). To this end, we regard the rectangular simplex $S = [0, \frac{e_1}{\tau_1}, \ldots, \frac{e_d}{\tau_d}]$ as the limit of the simplices $S'_{\tau_0} := [\frac{e_0}{\tau_0}, \frac{e_1}{\tau_1}, \ldots, \frac{e_d}{\tau_d}]$ as $\tau_0 \to +\infty$.

By Theorem~\ref{theo:orthocentr_simpl_relint_tangent_normal_angles}(a), for each $\tau_0 > 0$, the tangent cone $T(F, S'_{\tau_0})$ is the orthogonal direct sum of the lineality space of $F$ and a pointed cone
\[
C_{d-k}\left( \tau_1^2 + \ldots + \tau_{k+1}^2; \tau_0^2, \tau_{k+2}^2, \ldots, \tau_d^2; 1, \ldots, 1 \right).
\]
On the one hand, by Corollary~\ref{cor:intrinsic_volumes_orthocentr_simplices}, the corresponding internal and external angles are given by
\begin{align*}
\beta(F, S'_{\tau_0})
&=
\bg_{d-k}\left(-\tau_0^2 - \tau_1^2 - \ldots - \tau_d^2; \tau_0^2, \tau_{k+2}^2, \ldots, \tau_d^2; 1, \ldots, 1 \right), \\
\gamma(F, S'_{\tau_0})
&=
\bg_{d-k}\left(\tau_1^2 + \ldots + \tau_{k+1}^2; \tau_0^2, \tau_{k+2}^2, \ldots, \tau_d^2; 1, \ldots, 1 \right).
\end{align*}
On the other hand, using the same Gram matrix arguments as above, one verifies that $\beta(F, S'_{\tau_0}) \to \beta(F, S)$ and $\gamma(F, S'_{\tau_0}) \to \gamma(F, S)$ as $\tau_0 \to +\infty$. Substituting $\lambda_1 := \tau_0^2$ then establishes~\eqref{eq:rectangular_simplex_internal_angle_limit} and~\eqref{eq:rectangular_simplex_external_angle_limit} in the limit $\lambda_1 \to +\infty$. Observe that by Equation~\eqref{eq:bg_d(lambda_i,eps_i)_formula} of Theorem~\ref{theo:formula_for_bg_d(lambda_i,eps_i)},
$$
\beta(F, S'_{\tau_0})
=
\lim_{\tau_0 \to + \infty}\frac{1}{\sqrt{2 \pi}} \int_{-\infty}^{\infty} \prod_{j\in \{0,k+2,\ldots, d\}} \Phi \Bigg(\frac{ i\tau_j x}{\sqrt{\tau_0^2 + \tau_1^2 + \ldots + \tau_d^2}} \Bigg) \eee^{-x^2/2} \dd x.
$$
The above provides an indirect proof that the limit on right-hand side coincides with the expression in~\eqref{eq:rectangular_simplex_internal_angle_explicit_formula}. A direct verification of this identity is non-trivial.
\end{proof}


\section*{Acknowledgement}
Supported by the German Research Foundation under Germany’s Excellence Strategy EXC 2044 – 390685587, Mathematics
Münster: Dynamics - Geometry - Structure and by the DFG priority program SPP 2265 Random Geometric Systems.

\addcontentsline{toc}{section}{References}
\bibliography{angles_of_general_orthocentric_simplices_bib}
\bibliographystyle{plainnat}

\vspace{1cm}

\footnotesize

\textsc{Zakhar Kabluchko: Institut f\"ur Mathematische Stochastik,
Universit\"at M\"unster,
Orl\'eans-Ring 10,
48149 M\"unster, Germany}\\
\textit{E-mail}: \texttt{zakhar.kabluchko@uni-muenster.de}\\

\textsc{Philipp Schange: Institut f\"ur Mathematische Stochastik, Universit\"at M\"unster,
Orl\'eans-Ring 10,
48149 M\"unster, Germany}\\
\textit{E-mail}: \texttt{philipp.schange@uni-muenster.de}\\

\end{document}